\newcommand{\Sum}{\displaystyle \sum} 
\newcommand{\Int}{\displaystyle \int} 
\newcommand{\Frac}{\displaystyle \frac}
\numberwithin{equation}{section}
\def\div{ \hbox{\rm div}\,  }
\def\curl{ \hbox{\rm curl}\,  }
\def\Id{\hbox{\rm Id}}
\def\adj{\hbox{\rm adj}}
\def\Tr{\hbox{\rm Tr}}
\def\divA{\, \hbox{\rm div}_A\,  }
\def\divx{\, \hbox{\rm div}_x\,  }
\def\divy{\, \hbox{\rm div}_y\,  }
\newcommand{\R}{{\mathbb R}}
\newcommand{\N}{{\mathbb N}}
\newcommand{\Z}{{\mathbb Z}}
\def\d{\partial}
\def\ep{\varepsilon}
\def\ddj{\dot\Delta_j}
\def\cP{{\mathcal P}}
\def\cE{{\mathcal E}}
\def\cM{{\mathcal M}}
\def\cC{{\mathcal C}}
\def\cS{{\mathcal S}}
\let\tilde=\widetilde
\newcommand{\dC}{\delta\! C}
\newcommand{\da}{\delta\! a}
\newcommand{\df}{\delta\! f}
\newcommand{\du}{\delta\! u}
\newcommand{\dv}{\delta\! v}
\newcommand{\dr}{\delta\!\rho}
\newtheorem{lem}{Lemma}
\newtheorem{prop}{Proposition}
\newtheorem{theo}{Theorem}
\newtheorem{rem}{Remark}
\newenvironment{p}{
\noindent\textit{\textbf{Proof:}}~}
{\hfill\rule{2.1mm}{2.1mm}
}
\begin{document}
\title[Compressible Navier-Stokes equations]{A Lagrangian approach for the compressible Navier-Stokes equations\\[3ex]
\emph{\small Une approche lagrangienne pour le syst\`eme de Navier-Stokes compressible}}

\author[R. Danchin]{Rapha\"el Danchin}
\address[R. Danchin]
{Universit\'e Paris-Est, LAMA, UMR 8050 and Institut Universitaire de France,
 61 avenue du G\'en\'eral de Gaulle,
94010 Cr\'eteil Cedex, France.}
\email{danchin@univ-paris12.fr}

\date\today  
\begin{abstract} Here we  investigate the Cauchy problem for  the barotropic Navier-Stokes equations in $\R^n$,  in the critical Besov spaces setting.
We improve recent results (see \cite{CMZ1,D1,D2})  as regards the uniqueness condition:  initial velocities in critical  Besov spaces with (not too) \emph{negative} indices  
 generate a unique local solution. 
Apart from (critical) regularity, the initial density just has to be bounded away from $0$ and to tend to some
positive constant at infinity. Density-dependent
viscosity coefficients may be considered. 
Using Lagrangian coordinates is the key to our statements as it enables us to solve the
system by means of the basic contraction mapping theorem. As a consequence, 
conditions for uniqueness are the same as for existence, and Lipschitz continuity of the flow map 
(in Lagrangian coordinates) is established.
\bigbreak\noindent R\'ESUM\'E. On \'etudie le probl\`eme de Cauchy pour le syst\`eme de Navier-Stokes barotrope
dans $\R^n,$ avec r\'egularit\'e Besov critique.  On affaiblit la condition d'unicit\'e  des articles  \cite{CMZ1,D1,D2}, ce qui permet d'\'etablir
entre autres que des vitesses initiales ayant une r\'egularit\'e Besov (pas trop) n\'egative g\'en\`erent une solution unique.  La densit\'e initiale
est \`a r\'egularit\'e critique et doit juste \^etre strictement positive et tendre vers une constante \`a l'infini. Les coefficients de viscosit\'e 
preuvent d\'ependre de la densit\'e. L'usage de coordonn\'ees lagrangiennes est la clef de toutes ces am\'eliorations
car il permet de r\'esoudre le syst\`eme par it\'erations de Picard. Comme corollaire imm\'ediat, on obtient que les conditions
pour l'unicit\'e sont les m\^emes que pour l'existence, ainsi que la continuit\'e de l'op\'erateur solution (pour le syst\`eme \'ecrit
en coordonn\'ees lagrangiennes). 
\end{abstract}

\keywords{Compressible fluids, uniqueness, critical regularity, Lagrangian coordinates.\\
Fluides compressibles, unicit\'e, r\'egularit\'e critique, coordonn\'ees lagrangiennes.}

\subjclass{35Q35, 76N10}
\maketitle

\section*{Introduction}

 We address the well-posedness issue for the barotropic  compressible Navier-Stokes equations 
with variable density in the whole space $\R^n$: 
\begin{equation}\label{eq:euler}
\left\{\begin{array}{l}
\d_t\rho+\div(\rho u)=0,\\[1ex]
\d_t(\rho u)+\div(\rho u\otimes  u)-2\div(\mu(\rho)D(u))-\nabla(\lambda(\rho)\div u)+\nabla(P(\rho))=0,\\[1ex]
\rho|_{|t=0}=\rho_0,\qquad u|_{|t=0}=u_0.
\end{array}\right.
\end{equation}
Above $\rho=\rho(t,x)\in\R_+$ stands for the density, 
$u=u(t,x)\in\R^n,$ for the velocity field. The space variable $x$ belongs to the whole
$\R^n.$   The notation $D(u)$ designates the \emph{deformation tensor}
which is defined by $$D(u):=\frac12(Du+\nabla u)\quad\hbox{with}\quad
(Du)_{ij}:=\d_ju^i\quad\hbox{and}\quad (\nabla u)_{ij}:=\d_iu^j.$$ 
The pressure function $P$ and the viscosity coefficients $\lambda$ and $\mu$ are given suitably smooth functions of the density. With no loss of generality, one may assume that $P$ is defined over $\R$ and vanishes at $0.$
 As we focus on \emph{viscous} fluids, we suppose  that
\begin{equation}\label{eq:lamepos}
\alpha:=\min\Bigl(\inf_{\rho>0}  (\lambda(\rho)+2\mu(\rho)),\inf_{\rho>0} \mu(\rho)\Bigr)>0,
\end{equation}
which ensures   the second order operator in the velocity equation of \eqref{eq:euler} to be uniformly elliptic. 
\medbreak
We supplement System \eqref{eq:euler} with the condition at infinity that
 $u$   tends to $0$ and $\rho,$ to some positive constant (that may be taken equal 
 to $1$ after suitable normalization).  The exact meaning of those boundary conditions
  will be given by the functional framework in which we shall consider the system. 
\smallbreak

 In the present paper, we aim at solving \eqref{eq:euler}
 in \emph{critical} functional spaces, that is  in spaces which have 
 the same invariance with respect to time and space dilation as 
 the system itself (see e.g. \cite{D1}  for more explanations
 about this nowadays classical approach). In this framework, it has been 
 stated \cite{D1,D2} in the constant coefficients case  
 that,  for data $(\rho_0,u_0)$ such that 
$$a_0:=(\rho_0-1)\in\dot B^{n/p}_{p,1}(\R^n),\qquad
u_0\in\dot B^{n/p-1}_{p,1}(\R^n)$$ 
and that, for a small enough constant $c,$
\begin{equation}\label{eq:smalldata0}
\|a_0\|_{\dot B^{n/p}_{p,1}(\R^n)}\leq c,
\end{equation}
we have for any  $p\in[1,2n)$: 
\begin{itemize}
\item existence of a local solution $(\rho,u)$ 
such that   $a:=(\rho-1)\in\cC_b([0,T];\dot B^{n/p}_{p,1}),$
$u\!\in\!\cC_b([0,T];\dot B^{n/p-1}_{p,1})$ and $\d_tu,\nabla^2u\in L^1(0,T;\dot B^{n/p-1}_{p,1})$;
\item uniqueness in the above space if in addition  $p\leq n.$
\end{itemize}
If $p\leq n$ then the viscosity coefficients may depend (smoothly) on $\rho$ and
the smallness condition \eqref{eq:smalldata0} may be replaced by the following positivity condition (see \cite{CMZ1,D3}):
\begin{equation}
\inf_{x\in\R^n} \rho_0(x)>0.
\end{equation}
Those results have been somewhat extended in \cite{H2} where it has been noticed 
that $a_0$ may be taken in a larger Besov space, with another Lebesgue exponent.
\medbreak
The above  results are based on maximal regularity estimates in Besov spaces for the evolutionary Lam\'e system, 
and on the Schauder-Tychonoff fixed point theorem.
In effect, owing to the hyperbolicity of the density equation, 
there is a loss of one derivative in the stability estimates
thus precluding the use of the contraction mapping (or Banach fixed point) theorem. 
As a consequence, with  this method it is  found that the conditions for uniqueness are \emph{stronger} than those
for  existence. 
\medbreak
Following our recent paper \cite{DM-cpam} dedicated to the incompressible density-dependent Navier-Stokes equation, and older works concerning the compressible Navier-Stokes equations
(see  \cite{Mucha,valli,VZ}), 
 we here aim at solving System \eqref{eq:euler} in the \emph{Lagrangian coordinates.} 
The main motivation is that the mass  is \emph{constant} along the flow hence, to some extent,
only the (parabolic type) equation for the velocity has to be considered. 
After performing this change of coordinates, we shall see that  solving
\eqref{eq:euler} may be done by means of the Banach fixed point theorem. 
Hence, the condition for uniqueness \emph{is the same} as that for the existence, and  the flow map is Lipschitz continuous.
In addition, in the case of fully nonhomogeneous fluids with variable viscosity coefficients, 
the analysis turns out to be simpler than in \cite{CMZ1,D3} 
even for density-dependent viscosity coefficients and in the case 
where the density is not close to a constant. Indeed, our  proof
relies essentially on a priori estimates for a parabolic system (a suitable linearization of 
the momentum equation in Lagrangian coordinates) with rough constant \emph{depending only
on the initial density} hence time-independent. In contrast, in \cite{CMZ1,D3}
 tracking  the time-dependency of the coefficients was quite  technical. 
\medbreak
We now come to the plan  of the paper. 
In the next section, we introduce the compressible Navier-Stokes equations in Lagrangian coordinates
and present our main results.
Section \ref{s:simple} is devoted to the proof of our main existence and uniqueness result 
in the simpler case where the density is close to a constant and the coefficients, density independent.
In Section \ref{s:nonhomo}, we treat the general fully nonhomogeneous case with 
nonconstant coefficients. A great deal of the analysis is contained in the study of  the linearized momentum equation for  \eqref{eq:euler}
(see Subsection \ref{s:linear}) which turns out to be a Lam\'e type system with variable rough coefficients.
This will enable us to define a self-map $\Phi$ on a suitably small ball of 
some  Banach space  $E_p(T)$ and to apply  the contraction mapping
theorem so as to solve the compressible Navier-Stokes equations in Lagrangian coordinates.
  In the Appendix we prove several technical  results concerning
the Lagrangian coordinates and Besov spaces.

\medbreak
\noindent {\bf Notation:}  
Throughout, the notation $C$ stands for a generic constant (the meaning of which depends on the context), and we sometimes write $A\lesssim B$ instead of $A\leq CB.$ For $X$ a Banach space,
$p\in[1,+\infty]$ and $T>0,$ the notation  $L^p(0,T;X)$ or $L^p_T(X)$  designates  
  the set of measurable functions $f:[0,T]\rightarrow X$ with $t\mapsto\|f(t)\|_X$ in $L^p(0,T),$
  endowed with the norm
  $$
  \|f\|_{L^p_T(X)}:=\bigl\|\,\| f\|_{X}\,\bigr\|_{L^p(0,T)}.
  $$
  We agree that  $\cC([0,T];X)$ denotes the set of continuous functions from $[0,T]$ to $X.$


 \section{Main results}

 Before  deriving   the Lagrangian equations
corresponding to \eqref{eq:euler}, let us introduce more notation.
We agree that for a $C^1$ function $F:\R^n\rightarrow \R^n\times\R^m$ then 
$\div F:\R^n\rightarrow\R^m$ with
$$
(\div F)^j:=\sum_i\partial_iF_{ij}\quad\hbox{for }\ 1\leq j\leq m,
$$ 
and that  for $A=(A_{ij})_{1\leq i,j\leq n}$ and  $B=(B_{ij})_{1\leq i,j\leq n}$
 two $n\times n$ matrices, we denote $$A:B=\Tr AB=\sum_{i,j} A_{ij} B_{ji}.$$ 
 The notation $\adj(A)$ designates the
 adjugate matrix that is the transposed cofactor matrix. 
 Of course if $A$ is invertible then we have
 $\adj(A)=(\det A)\:A^{-1}.$
 Finally, given some matrix $A,$ we define the ``twisted'' deformation tensor
 and divergence operator (acting on vector fields $z$) by the formulae
 $$
 D_A(z):=\frac12\bigl(Dz\cdot A+{}^T\!A\cdot\nabla z\bigr)\quad\hbox{and}\quad
 \divA z:={}^T\!A:\nabla z=Dz:A.
  $$
Let $X$ be the flow associated to the vector-field $u,$ that is
the solution to 
\begin{equation}\label{lag}
X(t,y)=y+\int_0^tu(\tau,X(\tau,y))\,d\tau.
\end{equation}
Denoting 
$$
\bar\rho(t,y):=\rho(t,X(t,y))\ \hbox{ and }\ 
\bar u(t,y)=u(t,X(t,y))
$$
with $(\rho,u)$ a solution of \eqref{eq:euler}, 
and using the chain rule and Lemma \ref{l:div} from the Appendix, we gather
that $(\bar\rho,\bar u)$  satisfies
 \begin{equation}\label{eq:lagrangian}
\left\{\begin{array}{l}
\d_t(J\bar\rho)=0\\[1.5ex]
\rho_0\d_t\bar u-\div\Bigl(\adj(DX)\bigl(2\mu(\bar\rho)D_A(\bar u)
+\lambda(\bar\rho)\divA\bar u\,\Id+P(\bar\rho)\Id\bigr)\Bigr)=0\end{array}\right.
\end{equation}
with  $J:=\det DX$ and $A:=(D_yX)^{-1}.$ 
Note that one may forget any reference to the initial Eulerian vector-field $u$ by defining directly the ``flow'' 
$X$  of  $\bar u$ by the formula
\begin{equation}\label{eq:lag}
X(t,y)=y+\int_0^t\bar u(\tau,y)\,d\tau.
\end{equation}

We want to solve the above system in \emph{critical} homogeneous Besov spaces.
Let us recall that, for $1\leq p\leq\infty$ and $s\leq n/p,$ a tempered distribution $u$ 
over $\R^n$ 
belongs to the homogeneous Besov space $\dot B^s_{p,1}(\R^n)$ if 
$$
u=\sum_{j\in\Z}\ddj u\quad\hbox{in }\ \cS'(\R^n)
$$
and 
\begin{equation}\label{eq:B}
\|u\|_{\dot B^s_{p,1}(\R^n)}:=\sum_{j\in\Z}2^{js}\|\ddj u\|_{L^p(\R^n)}<\infty.
\end{equation}
Here $(\ddj)_{j\in\Z}$ denotes a homogeneous dyadic resolution of unity 
in Fourier variables --the so-called Littlewood-Paley decomposition (see e.g. \cite{BCD}, Chap. 2 for more details on the
Littlewood-Paley decomposition and Besov spaces). 
\smallbreak
Loosely speaking, a function belongs to $\dot B^s_{p,1}(\R^n)$
if it has $s$ derivatives in $L^p(\R^n).$ 
In the present paper, we shall mainly use  the following classical properties:
\begin{itemize}
\item the Besov space $\dot B^{n/p}_{p,1}(\R^n)$ is a Banach algebra embedded in 
the set of continuous functions going to $0$ at infinity, whenever $1\leq p<\infty;$
\item the usual product maps $\dot B^{n/p-1}_{p,1}(\R^n)\times \dot B^{n/p}_{p,1}(\R^n)$
in $\dot B^{n/p-1}_{p,1}(\R^n)$ whenever $1\leq p<2n;$
\item Let $F:I\rightarrow\R$ be a smooth function (with $I$ an open  interval of $\R$ containing $0$)
vanishing at $0.$ Then for any $s>0,$  $1\leq p\leq\infty$ and interval $J$ compactly supported in $I$ there exists
a constant $C$ such that 
\begin{equation}\label{eq:compo1}
\|F(a)\|_{\dot B^{s}_{p,1}(\R^n)}\leq C\|a\|_{\dot B^{s}_{p,1}(\R^n)}
\end{equation}
for any $a\in\dot B^s_{p,1}(\R^n)$ with values in $J.$
 In addition, if $a_1$ and $a_2$ are two such functions and $s=n/p$ then we have
 \begin{equation}\label{eq:compo2}
\|F(a_2)-F(a_1)\|_{\dot B^{n/p}_{p,1}(\R^n)}\leq C\|a_2-a_1\|_{\dot B^{n/p}_{p,1}(\R^n)}.
\end{equation}
\end{itemize}

{}From now on, we shall omit $\R^n$ in the notation for Besov spaces. 
We shall obtain the existence and uniqueness of a local-in-time  solution $(\bar\rho,\bar u)$ 
for \eqref{eq:lagrangian}, with $\bar a:=\bar\rho-1$ in $\cC([0,T];\dot B^{n/p}_{p,1})$ and $\bar u$ in the space
$$
E_p(T):=\bigl\{v\in\cC([0,T];\dot B^{n/p-1}_{p,1}), 
\ \d_tv,\nabla ^2v\in L^1(0,T;\dot B^{n/p-1}_{p,1})\bigr\}\cdotp
$$
That space will be  endowed  with the norm
$$
\|v\|_{E_p(T)}:=\|v\|_{L^\infty_T(\dot B^{n/p-1}_{p,1})}+
\|\d_tv,\nabla^2v\|_{L^1_T(\dot B^{n/p-1}_{p,1})}.
$$
 Let us now state our main result.
\begin{theo}\label{th:main1}
 Let $1<p<2n$ and $n\geq2.$ Let  $u_0$ be a vector-field in $\dot B^{n/p-1}_{p,1}.$
 Assume that the initial density $\rho_0$ satisfies $a_0:=(\rho_0-1)\in\dot B^{n/p}_{p,1}$ and   
  \begin{equation}\label{eq:positive}
 \inf_{x}\rho_0(x)>0.
 \end{equation}
 Then System \eqref{eq:lagrangian} has a unique local  solution $(\bar\rho,\bar u)$ with  
$(\bar a,\bar u)\in\cC([0,T];\dot B^{n/p}_{p,1})\times E_p(T).$
Moreover, 
 the flow map $(a_0,u_0)\longmapsto (\bar a,\bar u)$
is Lipschitz  continuous from  $\dot B^{n/p}_{p,1}\times\dot B^{n/p-1}_{p,1}$
to  $\cC([0,T];\dot B^{n/p}_{p,1})\times E_p(T).$
\medbreak
If $\rho_0$ is close enough to some positive constant then the statement holds true
for all $p\in[1,\infty)$ and $n\geq1.$
 \end{theo} 
 In Eulerian coordinates, this result recasts in:
\begin{theo}\label{th:main2}
Under the hypotheses of Theorem \ref{th:main1} with $1<p<2n$ and $n\geq2,$  System \eqref{eq:euler} has a unique local solution $(\rho, u)$ 
with~ $u\in E_p(T),$ $\rho$ bounded away from $0$ and  $(\rho-1)\in \cC([0,T];\dot B^{n/p}_{p,1}).$ 
 \end{theo}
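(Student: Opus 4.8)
The plan is to read off Theorem \ref{th:main2} from Theorem \ref{th:main1} by undoing the change of variables that produced \eqref{eq:lagrangian}, the only genuine work being to check that this change of variables and its inverse act continuously on the critical Besov spaces at hand. So first I would apply Theorem \ref{th:main1} to the data $(\rho_0,u_0)$ satisfying \eqref{eq:positive}: on some interval $[0,T]$ this gives a unique Lagrangian solution $(\bar\rho,\bar u)$ with $\bar a:=\bar\rho-1\in\cC([0,T];\dot B^{n/p}_{p,1})$ and $\bar u\in E_p(T)$. Next, define the flow $X$ by \eqref{eq:lag}. Since $\bar u\in E_p(T)$, an interpolation argument of the type carried out in the Appendix gives $\nabla\bar u\in L^1(0,T;\dot B^{n/p}_{p,1})$ with norm $\leq C\,T^{1/2}\|\bar u\|_{E_p(T)}$; as $\dot B^{n/p}_{p,1}$ is a Banach algebra embedded in $L^\infty$, the matrix $DX-\Id=\int_0^t\nabla\bar u\,d\tau$ is small in $L^\infty(0,T;\dot B^{n/p}_{p,1})$ after shrinking $T$ if need be. Hence for each $t\in[0,T]$ the map $y\mapsto X(t,y)$ is a $C^1$ diffeomorphism of $\R^n$, $J=\det DX$ stays close to $1$, $A=(DX)^{-1}=\adj(DX)/J$ is well defined, and the inverse diffeomorphism $Y(t,\cdot):=X(t,\cdot)^{-1}$ also satisfies $DY-\Id$ small in $L^\infty(0,T;\dot B^{n/p}_{p,1})$, by the Appendix estimates on composition by, and inversion of, near-identity diffeomorphisms.

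I would then set $\rho(t,x):=\bar\rho(t,Y(t,x))$ and $u(t,x):=\bar u(t,Y(t,x))$. By the composition estimates, $\rho-1$ inherits the regularity $\cC([0,T];\dot B^{n/p}_{p,1})$ and $u$ inherits $u\in E_p(T)$; the last point requires checking that $\d_tu$ and $\nabla^2u$ stay in $L^1_T(\dot B^{n/p-1}_{p,1})$, using $\d_tu=(\d_t\bar u)\circ Y+(\nabla\bar u\circ Y)\,\d_tY$ (and the analogous chain rule for $\nabla^2u$) together with the product law $\dot B^{n/p-1}_{p,1}\times\dot B^{n/p}_{p,1}\to\dot B^{n/p-1}_{p,1}$ and the identity $\d_tY=-(\nabla Y)\,u$. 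From the first equation of \eqref{eq:lagrangian}, $J\bar\rho=\rho_0$, so $\bar\rho=\rho_0/J$ is bounded away from $0$ because $J$ is close to $1$ and $\rho_0$ satisfies \eqref{eq:positive}; hence so is $\rho$, and its time continuity follows from that of $\bar\rho$ and of the flow. Finally, reversing the chain-rule computation and Lemma \ref{l:div} that led from \eqref{eq:euler} to \eqref{eq:lagrangian} — an equivalence for solutions with this regularity — shows that $(\rho,u)$ solves \eqref{eq:euler} on $[0,T]$, the behaviour at infinity being built into the Besov framework.

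For uniqueness, let $(\rho_1,u_1)$ and $(\rho_2,u_2)$ be two solutions of \eqref{eq:euler} on $[0,T]$ in the class of the statement. Since each $u_i\in E_p(T)$, the same interpolation argument gives $\nabla u_i\in L^1_T(\dot B^{n/p}_{p,1})$, so (after possibly shrinking $T$) each $u_i$ generates a flow $X_i$ that is a near-identity diffeomorphism, and $(\bar\rho_i,\bar u_i):=(\rho_i\circ X_i,u_i\circ X_i)$ solves \eqref{eq:lagrangian} in $\cC([0,T];\dot B^{n/p}_{p,1})\times E_p(T)$ with the same data $(\rho_0,u_0)$. By the uniqueness part of Theorem \ref{th:main1}, $(\bar\rho_1,\bar u_1)=(\bar\rho_2,\bar u_2)$; then $X_1=X_2$ by \eqref{eq:lag}, and composing back with the common inverse flow yields $(\rho_1,u_1)=(\rho_2,u_2)$.

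The main obstacle is the collection of continuity estimates for the Lagrangian change of variables in the borderline space $\dot B^{n/p}_{p,1}$: one must show that $v\mapsto v\circ X$ and $v\mapsto v\circ Y$ are bounded on both $\dot B^{n/p}_{p,1}$ and $\dot B^{n/p-1}_{p,1}$ when $DX-\Id$ is small there, that the nonlinear map $X\mapsto X^{-1}$ is controlled, and that all of this is compatible with the time integrability and continuity encoded in $E_p(T)$ — precisely the technical facts relegated to the Appendix. Granting those, the passage between \eqref{eq:euler} and \eqref{eq:lagrangian} is essentially bookkeeping; a minor extra point is that the two solutions in the uniqueness argument should be compared on a common time interval, which costs nothing since $T$ may always be decreased.
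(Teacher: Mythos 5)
Your proposal is correct and follows essentially the same route as the paper: existence is obtained by pushing the Lagrangian solution of Theorem \ref{th:main1} forward through the inverse flow, and uniqueness by pulling two Eulerian solutions back to Lagrangian coordinates and invoking the uniqueness part of Theorem \ref{th:main1}; the change-of-variables continuity facts you defer to the Appendix are exactly those packaged in the paper as Propositions \ref{p:equiv} and \ref{p:change}.
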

 Let us make a few comments concerning the above assumptions.
\begin{itemize}
\item We expect the Lagrangian method  to improve the uniqueness conditions given in e.g. \cite{D1} for
the full Navier-Stokes equations. We here consider the barotropic case for simplicity.
\item
The condition $1\leq p<2n$ is a consequence of the product laws in Besov spaces. 
It implies  that the regularity exponent for the velocity has to be greater than $-1/2$
(to be compared with $-1$ for the homogeneous incompressible Navier-Stokes equations).
It would be interesting to see whether introducing a modified velocity as in B. Haspot's works \cite{H,H2} 
allows to consider \emph{different} Lebesgue exponents for the Besov spaces pertaining to the density 
and the velocity so as to go beyond $p=2n$ for the velocity.

\item The regularity condition over the density is \emph{stronger} than that
for density-dependent incompressible fluids (see \cite{DM-cpam}). In particular, in contrast with incompressible fluids, 
it is not clear that combining  Lagrangian coordinates and critical regularity approach allows 
 to consider \emph{discontinuous} densities.  
\item Owing to the fact that the density satisfies a transport equation, 
we do not expect Lipschitz continuity of the flow map in high norm for the Eulerian formulation
to be true. 

\item It is  worth comparing our results with those of P. Germain in \cite{Germain}, 
 and D. Hoff in \cite{Hoff} concerning the weak-strong uniqueness problem.  
In both papers, the idea is to show that, \emph{in the constant viscosity case},  a finite energy weak solution  coincides 
with a strong one under some additional assumptions. The weak solution turns out to 
have less regularity than in Theorem \ref{th:main2}. At the same time, the assumptions 
on the strong solution $(\rho,u)$  are much stronger. In both papers, $\nabla u$ has to be in $L^1(0,T;L^\infty),$
and to satisfy additional conditions: roughly  $\nabla^2 u$ or $\d_tu$ have to be in $L^2(0,T;L^{d})$ 
in Germain's work, while  
$\sqrt t D^2u\in L^r(0,T;L^4)$ with $r=4/3$ if $n=2,$ and $r=8/5$ if $n=3$ in Hoff's paper. 
Some regularity conditions are  required on the density but they are, to some extent, weaker
than ours. 
\end{itemize}


\section{The simple case of almost homogeneous compressible fluids} \label{s:simple}

As a warm up and for the reader convenience, we here explain how local well-posedness may be proved  for 
the system in Lagrangian coordinates in the simple case where: 
\begin{enumerate}
\item The viscosity coefficients are constant,
\item The density is very close to one.
\end{enumerate}
 Let $\mu':=\lambda+\mu.$ 
Keeping in mind the above  two conditions and using the fact that 
the first equation of \eqref{eq:lagrangian}  implies that 
\begin{equation}\label{eq:rhobar} J(t,\cdot)\bar\rho(t,\cdot)\equiv\rho_0,
\end{equation}
with  $J:=|\det DX|$   and 
\begin{equation}\label{eq:flowbar}
X(t,y):=y+\int_0^t\bar u(\tau,y)\,d\tau,
\end{equation}
 we rewrite the equation  for  the Lagrangian velocity as (recall that  $A:=(DX)^{-1}$):
\begin{multline}\label{eq:2.3}
\d_t\bar u-\mu\Delta\bar u-\mu'\nabla\div\bar u=(1-\rho_0)\d_t\bar u
+2\mu\,\div\bigl(\adj(DX)D_A(\bar u)-D(\bar u)\bigr)\\
+\lambda\div\bigl(\adj(DX)\divA\bar u-\div\bar u\:\Id\bigr)-\div\bigl(\adj(DX)P(J^{-1}\rho_0)\bigr).
\end{multline}

The left-hand side of the above equation is the linear Lam\'e system with constant coefficients,
the solvability of which may be easily deduced from that of the heat equation  in the whole space (see e.g. \cite{BCD}, Chap. 2
or \cite{D-Chambery}).
We get:
\begin{prop}\label{p:lamewhole}
Let the viscosity coefficients $(\mu,\mu')\in\R^2$ satisfy $\mu>0$
and $\mu+\mu'>0.$
 Let $p\in [1,\infty]$ and $ s\in\R.$ Let  $u_0\in \dot B^s_{p,1}$
 and  $f \in L^1(0,T;\dot B^s_{p,1}).$ 
 Then the Lam\'e system
 \begin{equation}\label{eq:lame}
\left\{ \begin{array}{lcr}
 \d_tu-\mu \Delta u -\mu'\nabla\div u=f \qquad & \mbox{in} & (0,T)\times\R^n\\[5pt]
u|_{t=t_0} = u_0 &  \mbox{on} & \R^n
\end{array}\right.
\end{equation}
 has a unique solution $u$ in $\cC([0,T);\dot B^s_{p,1})$ such that
 $ \d_tu,\nabla^2u\in L^1(0,T;\dot B^s_{p,1})$
and the following estimate is valid:
\begin{equation}\label{eq:lameest}
\|u\|_{L^\infty_T(\dot B^s_{p,1})}+\min(\mu,\mu+\mu')\|\nabla^2 u\|_{ L^1_T(\dot B^s_{p,1})}
\leq C(\|f\|_{ L^1_T(\dot B^s_{p,1})}+\|u_0\|_{\dot B^s_{p,1}})
\end{equation}
where $C$ is an absolute constant with no dependence on $\mu,\mu'$ and $T.$
\end{prop}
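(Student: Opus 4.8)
The plan is to reduce \eqref{eq:lame} to two decoupled heat equations via the Helmholtz decomposition, and then to invoke the classical maximal regularity estimate for the heat semigroup in homogeneous Besov spaces.

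First, write $u=\cP u+\cQ u$ where $\cQ:=-\nabla(-\Delta)^{-1}\div$ is the projector onto potential vector-fields and $\cP:=\Id-\cQ.$ Both $\cP$ and $\cQ$ are Fourier multipliers associated with matrix-valued symbols that are smooth outside the origin and homogeneous of degree $0$; hence they act continuously on each $\dot B^s_{p,1}$ and commute with $\d_t,$ $\Delta$ and $\nabla\div.$ Since $\div\cP u\equiv0$ and $\nabla\div\cQ u=\Delta\cQ u$ (the latter being immediate in Fourier variables on potential fields), applying $\cP$ and $\cQ$ to \eqref{eq:lame} yields
\begin{equation*}
\d_t\cP u-\mu\Delta\cP u=\cP f,\qquad \d_t\cQ u-(\mu+\mu')\Delta\cQ u=\cQ f,
\end{equation*}
with data $\cP u_0$ and $\cQ u_0$ at $t=t_0.$ By assumption the two diffusion coefficients $\mu$ and $\mu+\mu'$ are positive, so we are reduced to the heat equation.

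Next I would establish the scale-invariant heat estimate. Localizing in frequency with $\ddj$ and using the Bernstein-type bound $\|e^{t\kappa\Delta}\ddj w\|_{L^p}\lesssim e^{-c\kappa2^{2j}t}\|\ddj w\|_{L^p},$ the Duhamel formula gives, for a solution $v$ of $\d_tv-\kappa\Delta v=g,$
\begin{equation*}
\|\ddj v(t)\|_{L^p}\lesssim e^{-c\kappa2^{2j}t}\|\ddj v_0\|_{L^p}+\int_0^t e^{-c\kappa2^{2j}(t-\tau)}\|\ddj g(\tau)\|_{L^p}\,d\tau.
\end{equation*}
Multiplying by $2^{js}$ (and additionally by $2^{2j}$ for the second-order term), summing over $j\in\Z,$ taking the $L^\infty_T$ and $L^1_T$ norms and applying Young's inequality in time together with the elementary bound $\kappa2^{2j}\int_0^t e^{-c\kappa2^{2j}(t-\tau)}\,d\tau\leq 1/c,$ one gets
\begin{equation*}
\|v\|_{L^\infty_T(\dot B^s_{p,1})}+\kappa\|\nabla^2v\|_{L^1_T(\dot B^s_{p,1})}\leq C\bigl(\|v_0\|_{\dot B^s_{p,1}}+\|g\|_{L^1_T(\dot B^s_{p,1})}\bigr),
\end{equation*}
with $C$ depending neither on $\kappa$ nor on $T.$ Applying this to $\cP u$ (with $\kappa=\mu$) and to $\cQ u$ (with $\kappa=\mu+\mu'$), summing the two contributions and using the boundedness of $\cP,\cQ$ on $\dot B^s_{p,1},$ delivers existence together with \eqref{eq:lameest} with $\min(\mu,\mu+\mu')$ on the left-hand side. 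Uniqueness follows by applying the same estimate to the difference of two solutions (taking $f=0,$ $u_0=0$), and the time-continuity $u\in\cC([0,T);\dot B^s_{p,1})$ is obtained in the usual way from the Duhamel formula, each dyadic block being continuous in time and the series $\sum_j2^{js}\|\ddj u(t)\|_{L^p}$ being dominated uniformly on $[0,T].$

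The statement is essentially soft, so the only points requiring care are bookkeeping ones: the uniformity of $C$ with respect to $\mu,\mu'$ and $T$ — which is precisely why one keeps the diffusivity $\kappa$ explicit and relies on the scale-invariant control of $\kappa2^{2j}\int_0^t e^{-c\kappa2^{2j}(t-\tau)}\,d\tau$ — and the uniform-in-$j$ boundedness of the Helmholtz projectors on the dyadic blocks, which comes from the degree-zero homogeneity of their symbols. Alternatively, one may bypass the Helmholtz decomposition and diagonalize the symbol $\mu|\xi|^2\Id+\mu'\xi\otimes\xi$ of $-(\mu\Delta+\mu'\nabla\div),$ whose eigenvalues are $\mu|\xi|^2$ and $(\mu+\mu')|\xi|^2,$ so that the semigroup obeys $\|e^{t(\mu\Delta+\mu'\nabla\div)}\ddj w\|_{L^p}\lesssim e^{-c\min(\mu,\mu+\mu')2^{2j}t}\|\ddj w\|_{L^p},$ after which the argument is unchanged.
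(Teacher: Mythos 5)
Your proof is correct and follows exactly the route the paper indicates: the paper states that the constant-coefficient Lam\'e system "may be easily deduced from that of the heat equation" (citing \cite{BCD}, Chap.~2), and indeed its proof of the variable-coefficient analogue (Proposition \ref{p:lamesmooth}) uses the same compressible/incompressible splitting via the degree-zero multipliers $|D|^{-1}\div$ and $|D|^{-1}\curl$ that your Helmholtz decomposition implements. The bookkeeping points you flag — uniformity of the constant in $\kappa$ and $T$ via $\kappa2^{2j}\int_0^te^{-c\kappa2^{2j}(t-\tau)}\,d\tau\leq 1/c,$ and the uniform boundedness of the projectors on dyadic blocks — are exactly the right ones, and the diagonalization of $\mu|\xi|^2\Id+\mu'\xi\otimes\xi$ you mention as an alternative is an equally standard variant.
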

In the rest of this section, we drop the bars on the Lagrangian velocity field. 
Granted with the above proposition, we define a map $\Phi: v\mapsto u$ on $E_p(T)$  where 
$u$ stands for the solution to 
\begin{equation}\label{eq:baru}
\d_tu-\mu\Delta u-\mu'\nabla\div u= I_1(v)+2\mu\div I_2(v,v)
+\lambda\div I_3(v,v)-\div I_4(v)
\end{equation}
with initial data $u_0$ and 
$$
\begin{array}{llllll}
I_1(w)&=&-a_0\d_tw,\quad&
I_2(v, w)&=&\adj(DX_v)D_{A_v}(w)-D(w),\\[1ex]
I_3(v, w)&=&\div_{\!A_v}w\:\adj(DX_v)-\div w\:\Id,\quad&
I_4(v)&=&\adj(DX_v)P(J_v^{-1}\rho_0).
\end{array}
$$
Note that any fixed point of $\Phi$ is a solution in $E_p(T)$ to \eqref{eq:2.3}. 
We claim that the existence of such points is a consequence of 
 the standard Banach fixed point theorem in a suitable closed ball of $E_p(T).$ 
\smallbreak
\subsubsection*{First step: estimates for $I_1,$ $I_2,$ $I_3$ and $I_4$}

Throughout we assume that  for a small enough constant $c,$ 
\begin{equation}\label{eq:smallv}
\int_0^T\|Dv\|_{\dot B^{n/p}_{p,1}}\,dt\leq c.
\end{equation}
It is obvious that
\begin{equation}\label{eq:I1}
\|I_1(w)\|_{L^1_T(\dot B^{n/p-1}_{p,1})}\leq \|a_0\|_{\cM(\dot B^{n/p-1}_{p,1})}
\|\d_tw\|_{L^1_T(\dot B^{n/p-1}_{p,1})}
\end{equation}
where  the multiplier norm  $\cM(\dot B^{s}_{p,1})$ for $\dot B^{s}_{p,1},$
 is defined by 
\begin{equation}\label{eq:defmult}
\|f\|_{\cM(\dot B^{s}_{p,1})}:=\sup \|\psi f\|_{\dot B^{s}_{p,1}}.
\end{equation}
 The supremum is taken over those functions $\psi$ in $\dot B^{s}_{p,1}$ with norm $1.$
\medbreak
Next, 
taking advantage of the fact that $\dot B^{n/p}_{p,1}$ is an algebra if $1\leq p<\infty$, 
of  \eqref{eq:U3}, \eqref{eq:U4}
and \eqref{eq:smallv}, we readily get
 \begin{equation}\label{eq:I2}
\|I_2( v, w)\|_{L^1_T(\dot B^{n/p}_{p,1})}+\|I_3(v,w)\|_{L^1_T(\dot B^{n/p}_{p,1})}\leq 
C\|Dv\|_{L^1_T(\dot B^{n/p}_{p,1})}\|D w\|_{L^1_T(\dot B^{n/p}_{p,1})}.
\end{equation}
As regards the pressure term (that is $I_4(v)$), we use the fact that
 under assumption \eqref{eq:smallv}, we have, by virtue of the composition inequality \eqref{eq:compo1} and of
   flow estimates (see  \eqref{eq:U1} and \eqref{eq:J}),
  \begin{equation}\label{eq:I4}
  \|I_4(v)\|_{L_T^\infty(\dot B^{n/p}_{p,1})}
  \leq C\bigl(1+\|D v\|_{L_T^1(\dot B^{n/p}_{p,1})}\bigr)\bigl(1+\|a_0\|_{\dot B^{n/p}_{p,1}}\bigr).
  \end{equation}

\subsubsection*{Second step: $\Phi$ maps a suitable closed ball in itself}
At this stage, one may assert that if $v\in E_p(T)$ satisfies \eqref{eq:smallv} 
then the right-hand side of \eqref{eq:baru} belongs to $L^1(0,T;\dot B^{n/p-1}_{p,1}).$ Hence
Proposition \ref{p:lamewhole} implies that $\Phi(v)$ is well defined and maps $E_p(T)$ to itself. 
However it is not clear that it is contractive over the whole set $E_p(T).$ 
So we introduce  $u_L$  the ``free solution'' to 
$$
\d_tu_L-\mu\Delta u_L-\mu'\nabla\div u_L=0,\qquad u_L|_{t=0}=u_0.
$$
Proposition \ref{p:lamewhole} guarantees that $u_L$ belongs to $E_p(T)$
for all $T>0.$ 
\medbreak
 We  claim that  if $T$ is small enough (a condition which will be expressed in terms of
  the free solution $u_L$) and if $R$
is small enough  (a condition which will depend only on the viscosity coefficients
  and on $p,$ $n$ and $P$) then
  $v\in \bar B_{E_p(T)}(u_L,R)$ implies that \eqref{eq:smallv} is fulfilled and that  $u\in \bar B_{E_p(T)}(u_L,R).$
  Indeed     $\tilde u:= u-u_L$  satisfies
   $$
   \left\{\begin{array}{l}
   \d_t\tilde u-\mu\Delta\tilde u-\mu'\nabla\div\tilde u=
 I_1(v)+2\mu\div I_2(v, v)
+\lambda\div I_3(v,v)-\div I_4(v),\\[1ex]
\tilde  u|_{t=0}=0.\end{array}\right.
$$
So Proposition  \ref{p:lamewhole} yields\footnote{For simplicity, we do not track the dependency
of the coefficients with respect to $\mu$ and $\mu'.$} 
   $$
  \|\tilde u\|_{E_p(T)}\lesssim \|I_1(v)\|_{L_T^1(\dot B^{n/p-1}_{p,1})}
  + \|I_2( v,v)\|_{L_T^1(\dot B^{n/p}_{p,1})}  +
 \|I_3(v, v)\|_{L_T^1(\dot B^{n/p}_{p,1})}
  +T\|I_4(v)\|_{L_T^\infty(\dot B^{n/p}_{p,1})}.
  $$
Inserting inequalities \eqref{eq:I1}, \eqref{eq:I2}  and \eqref{eq:I4}, we thus get:
  $$
  \displaylines{  \|\tilde u\|_{E_p(T)}\lesssim 
\|Dv\|_{L_T^1(\dot B^{n/p}_{p,1})}^2
 +\|a_0\|_{\cM(\dot B^{n/p-1}_{p,1})}
\|\d_tv\|_{L_T^1(\dot B^{n/p-1}_{p,1})} +T(1+\|a_0\|_{\dot B^{n/p}_{p,1}}).}  
  $$
That is, keeping in mind that $v$ is in $\bar B_{E_p(T)}(u_L,R),$
  $$\displaylines{
  \|\tilde u\|_{E_p(T)}\leq C\Bigl(\|a_0\|_{\cM(\dot B^{n/p-1}_{p,1})}
  (R+ \|\d_tu_L\|_{L_T^1(\dot B^{n/p-1}_{p,1})})
  +\|Du_L\|_{L_T^1(\dot B^{n/p}_{p,1})}^2+R^2
  +T(1+\|a_0\|_{\dot B^{n/p}_{p,1}})\Bigr).}  
  $$
  So we see that  if   $T$ satisfies
    \begin{equation}\label{eq:smallT}
  CT(1+\|a_0\|_{\dot B^{n/p}_{p,1}})\leq R/2\quad\hbox{and}\quad
  \|Du_L\|_{L_T^1(\dot B^{n/p}_{p,1})}+ \|\d_tu_L\|_{L_T^1(\dot B^{n/p-1}_{p,1})}\leq  R
 \end{equation}
 then we have 
 $$
   \|\tilde u\|_{E_p(T)}\leq 2C\|a_0\|_{\cM(\dot B^{n/p-1}_{p,1})}R+2CR^2+R/2.
   $$
  Hence     there exists a small constant $\eta=\eta(n,p)$ such that if 
    \begin{equation}\label{eq:smallness1}
  \|a_0\|_{\cM(\dot B^{n/p-1}_{p,1})}\leq\eta,
  \end{equation}
  and if $R$ has been chosen small enough then $u$ is in  $\bar B_{E_p(T)}(u_L,R).$
  Of course, taking $R$ and $T$ even smaller ensures that \eqref{eq:smallv}
  is satisfied for all vector-field of  $\bar B_{E_p(T)}(u_L,R).$
  
  
\subsubsection*{Third step: contraction properties} 

We  claim that under Conditions \eqref{eq:smallness1} and  \eqref{eq:smallT}
(with a smaller  $R$ if needed), 
  the map $\Phi$ is  $1/2$-Lipschitz   over $\bar B_{E_p(T)}(u_L,R).$  
So we are given  $v_1$ and $v_2$ 
in $\bar B_{E_p(T)}(u_L,R)\,$ and denote
$$
u_1:=\Phi(v_1)\quad\hbox{and}\quad u_2:=\Phi(v_2).$$

Let  $X_1$ and $X_2$ be the flows associated to $v_1$ and $v_2.$
Set $A_i=(DX_i)^{-1}$ and $J_i:=\det DX_i$ for $i=1,2.$ 
The equation satisfied by 
$\du:=u_2-u_1$  reads 
$$
\d_t\du-\mu\Delta\du-\mu'\nabla\div\du=\df:=\df_1+\div\df_2+2\mu\div\df_3+\lambda\div\df_4
$$
with 
$\df_1:=-a_0\d_t\du,$
$$\displaylines{
\df_2:=\adj(DX_1)P(\rho_0J_1^{-1})-\adj(DX_2)P(\rho_0J_2^{-1}),\cr
\df_3:=\adj(DX_2)D_{A_2}(u_2)- \adj(DX_1)D_{A_1}(u_1)-D(\du),\cr
\df_4:=\adj(DX_2){}^T\!A_2:\nabla u_2-\adj(DX_1){}^T\!A_1:\nabla u_1-\div\du\,\Id.}
$$

Once again, bounding $\du$ in $E_p(T)$  stems from Proposition \ref{p:lamewhole}, which 
ensures that
\begin{equation}\label{eq:dU}
\|\du\|_{E_p(T)}\lesssim\|\df_1\|_{L_T^1(\dot B^{n/p-1}_{p,1})}
+T\|\df_2\|_{L^\infty_T(\dot B^{n/p}_{p,1})}
+\|\df_3\|_{L_T^1(\dot B^{n/p}_{p,1})}+\|\df_4\|_{L_T^1(\dot B^{n/p}_{p,1})}.
\end{equation}

In order to bound $\df_1,$ we just have to use the definition of the multiplier space $\cM(\dot B^{n/p-1}_{p,1}).$ We get
\begin{eqnarray}\label{eq:df1}
&&\|\df_1\|_{L_T^1(\dot B^{n/p-1}_{p,1})}\leq \|a_0\|_{\cM(\dot B^{n/p-1}_{p,1})}\|\d_t\du\|_{L^1_T(\dot B^{n/p-1}_{p,1})}.
\end{eqnarray}
Next,  using the decomposition 
$$
\df_2=(\adj(DX_1)-\adj(DX_2))P(\rho_0J_2^{-1})+\adj(DX_1)(P(\rho_0J_1^{-1})-P(\rho_0J_2^{-1})),
$$
together with composition inequalities  \eqref{eq:compo1}, \eqref{eq:compo2} and \eqref{eq:dAdj},  and product laws in Besov space yields
\begin{equation}\label{eq:df3}
\|\df_2\|_{L_T^\infty(\dot B^{n/p}_{p,1})}\lesssim
T(1+\|a_0\|_{\dot B^{n/p}_{p,1}}) \|D\dv\|_{L_T^1(\dot B^{n/p}_{p,1})}
 \end{equation}
 Finally, we have
 $$
 \df_4=(\adj(DX_2)-\adj(DX_1)){}^T\!A_2:\nabla u_2+\adj(DX_1){}^T\!(A_2-A_1):\nabla u_2
 +(\adj(DX_1){}^TA_1-\Id):\nabla\du,
 $$
 whence, by virtue of  \eqref{eq:U1}, \eqref{eq:U2},  \eqref{eq:dA} and  \eqref{eq:dAdj},
  \begin{equation}\label{eq:df4}
\|\df_4\|_{L_T^1(\dot B^{n/p}_{p,1})}\lesssim
 \|D\dv\|_{L_T^1(\dot B^{n/p}_{p,1})}
 \|Du_2\|_{L_T^1(\dot B^{n/p}_{p,1})}+\|D\du\|_{L_T^1(\dot B^{n/p}_{p,1})}
 \|Dv_1\|_{L_T^1(\dot B^{n/p}_{p,1})}.
\end{equation}
 Bounding $\df_3$ works exactly the same.   So we see that if Conditions \eqref{eq:smallT} and  \eqref{eq:smallness1}   are  satisfied 
 (with smaller $\eta$ and larger $C$ if need be) then we have  
 $$
  \|\du\|_{E_p(T)}\leq \frac12\|\dv\|_{E_p(T)}.
  $$
  Hence, the map $\Phi: \bar B_{E_p(T)}(u_L,R)\mapsto   \bar B_{E_p(T)}(u_L,R)  $ is $1/2$-Lipschitz.
  Therefore, Banach' fixed point theorem ensures that $\Phi$ admits a unique fixed point in $\bar B_{E_p(T)}(u_L,R).$
   This completes the proof of existence of a  solution in $E_p(T)$ for System \eqref{eq:lagrangian}. 
   
   A tiny variation over the proof of the contraction properties yields uniqueness and Lipschitz continuity
   of the flow map.  We eventually get:
\begin{theo}\label{th:main1lagrangian}
Assume that $n\geq1.$  Let $p\in[1,\infty)$ and  $u_0$ be a vector-field in $\dot B^{n/p-1}_{p,1}.$
 Assume that the initial density $\rho_0$ satisfies  $a_0:=(\rho_0-1)\in \dot B^{n/p}_{p,1}.$
 There exists a constant $c$ depending only on $p$ and on $n$ such that if
 \begin{equation}\label{eq:smallrho}
 \|a_0\|_{\cM(\dot B^{n/p-1}_{p,1})}\leq c
 \end{equation}
 then System \eqref{eq:lagrangian} has a unique local  solution $(\bar\rho,\bar u)$ with 
$(\bar a,\bar u)\in\cC([0,T];\dot B^{n/p}_{p,1})\times E_p(T).$
Moreover, 
 the flow map $(a_0,u_0)\longmapsto (\bar a,\bar u)$
is Lipschitz  continuous from  $B^{n/p}_{p,1}\times\dot B^{n/p-1}_{p,1}$
to  $\cC([0,T];\dot B^{n/p}_{p,1})\times E_p(T).$
 \end{theo}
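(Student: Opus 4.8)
The plan is to recast Theorem~\ref{th:main1lagrangian} as a fixed point problem for the map $\Phi$ introduced above and to verify carefully that the argument of the three preceding steps goes through under the sole smallness assumption \eqref{eq:smallrho}. First I would observe that, thanks to Proposition~\ref{p:lamewhole} and the flow estimates \eqref{eq:U1}--\eqref{eq:J}, the free solution $u_L$ belongs to $E_p(T)$ for every $T>0$, so the quantities $\|Du_L\|_{L^1_T(\dot B^{n/p}_{p,1})}$ and $\|\d_tu_L\|_{L^1_T(\dot B^{n/p-1}_{p,1})}$ tend to $0$ as $T\to0$ by dominated convergence. Hence for $R$ fixed small (depending only on $n$, $p$, $P$ and the viscosity coefficients) one can choose $T$ small enough that the two conditions in \eqref{eq:smallT} hold and that every $v\in\bar B_{E_p(T)}(u_L,R)$ satisfies the smallness condition \eqref{eq:smallv}; this is exactly the place where the local-in-time nature of the statement is used.

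Next I would reproduce the stability estimates of the second and third steps. The self-map property follows from inequalities \eqref{eq:I1}, \eqref{eq:I2}, \eqref{eq:I4} applied to $\tilde u=\Phi(v)-u_L$, which gives
$$
\|\tilde u\|_{E_p(T)}\leq C\bigl((\|a_0\|_{\cM(\dot B^{n/p-1}_{p,1})}+\|Du_L\|_{L^1_T(\dot B^{n/p}_{p,1})}+R)(R+\|\d_tu_L\|_{L^1_T(\dot B^{n/p-1}_{p,1})})+\|Du_L\|^2_{L^1_T(\dot B^{n/p}_{p,1})}+R^2+T(1+\|a_0\|_{\dot B^{n/p}_{p,1}})\bigr);
$$
absorbing the $R^2$ and the multiplier term into $R/2$ requires precisely \eqref{eq:smallrho} with a suitable constant $c=\eta(n,p)$, and then \eqref{eq:smallT} handles the linear-in-$T$ term. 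For contraction I would use the decompositions of $\df_1,\dots,\df_5$ together with \eqref{eq:df1}--\eqref{eq:df4}, the composition bounds \eqref{eq:compo1}, \eqref{eq:compo2} and the flow difference estimates \eqref{eq:dA}, \eqref{eq:dJ}, \eqref{eq:dAdj}, to obtain $\|\du\|_{E_p(T)}\leq\frac12\|\dv\|_{E_p(T)}$; note that the crucial point is that all "bad" coefficients are controlled by $\|a_0\|_{\cM(\dot B^{n/p-1}_{p,1})}$ (time-independent) plus a quantity that is $O(R)$ or $o(1)$ as $T\to0$. The Banach fixed point theorem then produces a unique $\bar u\in\bar B_{E_p(T)}(u_L,R)$ solving the Lagrangian momentum equation, and one recovers $\bar\rho$ — hence $\bar a\in\cC([0,T];\dot B^{n/p}_{p,1})$ — from \eqref{eq:rhobar}, using that $J=\det DX$ with $X$ given by \eqref{eq:flowbar} stays close to $1$ by \eqref{eq:J} and that $\dot B^{n/p}_{p,1}$ is an algebra, so that $\rho_0 J^{-1}-1$ makes sense in that space.

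There remain two points to address. First, uniqueness in the full class $\cC([0,T];\dot B^{n/p}_{p,1})\times E_p(T)$ (not merely in the small ball): given any two solutions with the same data, I would note that both automatically satisfy \eqref{eq:smallv} on a possibly smaller time interval $[0,T']$ (since $Dv\in L^1(0,T;\dot B^{n/p}_{p,1})$ forces the integral to be small for $T'$ small), and then the contraction estimate applied on $[0,T']$ forces them to coincide there; a standard continuation argument over $[0,T]$ then gives global uniqueness on the whole interval. Second, Lipschitz continuity of $(a_0,u_0)\mapsto(\bar a,\bar u)$: this is the "tiny variation" alluded to — one runs the same differences $\df_1,\dots,\df_5$ but now also allows the data to differ, picks up the extra term $\|u_{0,2}-u_{0,1}\|_{\dot B^{n/p-1}_{p,1}}$ from Proposition~\ref{p:lamewhole} and a term $\|a_{0,2}-a_{0,1}\|_{\dot B^{n/p}_{p,1}}$ from $\df_2$ and $\df_3$ via \eqref{eq:compo2}, and absorbs the solution-difference contributions by the same smallness. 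The main obstacle I anticipate is purely bookkeeping: one must check that shrinking $\eta$ and $R$ and enlarging $C$ finitely many times across the self-map step, the contraction step and the Lipschitz step is mutually consistent, and that the smallness thresholds depend only on $n,p$ (and $P$, the viscosity coefficients) and never on $T$ — which is guaranteed because every $T$-dependent quantity appearing, namely $\|Du_L\|_{L^1_T}$, $\|\d_tu_L\|_{L^1_T}$ and the explicit factor $T$, can be made arbitrarily small by choosing $T$ small after $\eta,R$ are fixed.
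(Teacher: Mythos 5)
Your proposal is correct and follows essentially the same route as the paper: a Banach fixed point argument for $\Phi$ on the closed ball $\bar B_{E_p(T)}(u_L,R)$, using Proposition~\ref{p:lamewhole} together with the bounds \eqref{eq:I1}, \eqref{eq:I2}, \eqref{eq:I4} for the self-map step and the decompositions $\df_1,\dots,\df_5$ with \eqref{eq:dA}, \eqref{eq:dJ}, \eqref{eq:dAdj} for the contraction step, the smallness \eqref{eq:smallrho} playing exactly the role of \eqref{eq:smallness1}. The points you elaborate beyond the text — uniqueness outside the small ball via shrinking the time interval so that \eqref{eq:smallv} holds, and the Lipschitz dependence on the data — are precisely the "tiny variation" the paper alludes to and are handled in the same spirit as its Section~\ref{s:nonhomo}.
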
  In Eulerian coordinates, this result recasts in:
\begin{theo}\label{th:main1euler}
Under the above assumptions with in addition $n\geq2$ and $p<2n,$  System \eqref{eq:euler} has a unique local solution $(\rho, u)$ with
density bounded away from vacuum and  
$a\in \cC([0,T];\dot B^{n/p-1}_{p,1})$ and $u\in E_p(T).$
 \end{theo}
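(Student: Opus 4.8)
The plan is to derive Theorem \ref{th:main1euler} from its Lagrangian counterpart, Theorem \ref{th:main1lagrangian}, by transporting solutions back and forth through the change of variables that links \eqref{eq:euler} and \eqref{eq:lagrangian}. Under the smallness assumption \eqref{eq:smallrho}, Theorem \ref{th:main1lagrangian} provides, for some $T>0$, a unique solution $(\bar\rho,\bar u)$ of \eqref{eq:lagrangian} with $(\bar\rho-1,\bar u)\in\cC([0,T];\dot B^{n/p}_{p,1})\times E_p(T)$. Shrinking $T$ if necessary, we may assume that the flow $X$ defined by \eqref{eq:lag} satisfies the smallness condition \eqref{eq:smallv} (with $\bar u$ in place of $v$), so that by the flow estimates of the Appendix (e.g. \eqref{eq:U1} and \eqref{eq:J}) the map $X(t,\cdot)-\Id$ is small in $\dot B^{n/p}_{p,1}$ uniformly on $[0,T]$; in particular $X(t,\cdot)$ is a $C^1$ diffeomorphism of $\R^n$ whose inverse $Y(t,\cdot)$ obeys the same type of bounds, and $J:=\det DX$ stays close to $1$. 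We then set $\rho(t,x):=\bar\rho(t,Y(t,x))$ and $u(t,x):=\bar u(t,Y(t,x))$, and reverse the computation leading from \eqref{eq:euler} to \eqref{eq:lagrangian}: the chain rule together with Lemma \ref{l:div} shows that $(\rho,u)$ solves \eqref{eq:euler} on $[0,T]\times\R^n$; the initial conditions $\rho|_{t=0}=\rho_0$, $u|_{t=0}=u_0$ hold because $X(0,\cdot)=\Id$; and $\rho$ is bounded away from $0$ because $\bar\rho$ is and $J\equiv\rho_0/\bar\rho$ (see \eqref{eq:rhobar}) is close to $1$.

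The first thing to check is that this change of variables preserves the functional setting. Pre-composition with the near-identity diffeomorphisms $X$ and $Y$ is a bounded operation on $\dot B^{n/p}_{p,1}$, by the Appendix results on Lagrangian coordinates, and it is continuous in time since $t\mapsto Y(t,\cdot)-\Id$ is continuous into $\dot B^{n/p}_{p,1}$; since $\dot B^{n/p}_{p,1}$ is a Banach algebra, $a:=\rho-1=\bar a\circ Y$ inherits from $\bar a$ the regularity $\cC([0,T];\dot B^{n/p}_{p,1})$. For the velocity, the same composition estimates together with the bounds on $\nabla Y$ and $\nabla^2 Y$ coming from \eqref{eq:U1}--\eqref{eq:U4} give $u\in\cC([0,T];\dot B^{n/p-1}_{p,1})$ and $\nabla^2 u\in L^1_T(\dot B^{n/p-1}_{p,1})$; and since along trajectories $\partial_t u+u\cdot\nabla u=(\partial_t\bar u)\circ Y$, writing $\partial_t u=(\partial_t\bar u)\circ Y-u\cdot\nabla u$ and using the product law $\dot B^{n/p-1}_{p,1}\times\dot B^{n/p}_{p,1}\to\dot B^{n/p-1}_{p,1}$ (valid for $1\leq p<2n$) with $\nabla u\in L^1_T(\dot B^{n/p}_{p,1})$ yields $\partial_t u\in L^1_T(\dot B^{n/p-1}_{p,1})$. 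Hence $u\in E_p(T)$, which settles existence.

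For uniqueness, let $(\rho,u)$ be any solution of \eqref{eq:euler} on some $[0,T']$ with $\rho$ bounded away from $0$, $\rho-1\in\cC([0,T'];\dot B^{n/p}_{p,1})$ and $u\in E_p(T')$. Then $\nabla u\in L^1_{T'}(\dot B^{n/p}_{p,1})\hookrightarrow L^1_{T'}(L^\infty)$, so the flow $X$ solving \eqref{lag} is well defined; reducing $T'$ so that \eqref{eq:smallv} holds, $X(t,\cdot)$ is a diffeomorphism as above, and running the previous composition estimates in the opposite direction shows that $(\bar\rho,\bar u):=(\rho\circ X,u\circ X)$ solves \eqref{eq:lagrangian} in $\cC([0,T'];\dot B^{n/p}_{p,1})\times E_p(T')$. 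By the uniqueness part of Theorem \ref{th:main1lagrangian}, $(\bar\rho,\bar u)$ coincides on a common subinterval with the solution built above, and pulling back through $Y$ shows that $(\rho,u)$ is unique. The only non-routine ingredient — and what I expect to be the main obstacle — is the claim that pre-composition with $X$ and with $Y$ is a \emph{bicontinuous} operation on $\dot B^{n/p}_{p,1}$ and on $E_p(T)$ under the smallness \eqref{eq:smallv}; this is exactly the content of the Lagrangian/Besov lemmas of the Appendix, and it is also the place where the restriction $p<2n$ reappears, through the product law used to treat the convective term $u\cdot\nabla u$.
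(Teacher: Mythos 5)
Your argument is correct and is essentially the paper's own: the paper proves the Eulerian statement by establishing the equivalence of \eqref{eq:euler} and \eqref{eq:lagrangian} in this functional setting (Proposition \ref{p:equiv}, which rests on the composition result Proposition \ref{p:change}, the flow estimates \eqref{eq:U1}--\eqref{eq:J} and the algebraic relations \eqref{eq:lag1}--\eqref{eq:lag4} valid for $p<2n$), then transfers existence forward and uniqueness backward exactly as you do. The only point you gloss over slightly is extending uniqueness from the small subinterval where \eqref{eq:smallv} holds to all of $[0,T]$, which the paper handles by the standard iteration/connectedness argument.
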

   We do not give here more details on how to complete the proof of Theorem \ref{th:main1lagrangian}
   and its Eulerian counterpart, Theorem \ref{th:main1euler}, as it will done in the next section under much more general assumptions.


\section{The fully nonhomogeneous case}\label{s:nonhomo}

For treating the general case where $\rho_0$ only  satisfies \eqref{eq:positive},
just resorting to Proposition \ref{p:lamewhole} is not enough because the term $I_1(v,v)$ in the r.h.s. of \eqref{eq:baru} need not be small. 
One has first to establish a similar statement 
for a Lam\'e system with \emph{nonconstant} coefficients. 
More precisely, keeping in mind that $\rho=J_u^{-1}\rho_0$  (we still drop the bars for notational simplicity), 
 we recast the velocity equation of \eqref{eq:lagrangian}  in:
$$
L_{\rho_0}( u)= \rho_0^{-1}\div\bigl(I_1(u, u)+I_2(u, u)
+I_3( u, u)+I_4(u)\bigr)
$$
 with 
 \begin{equation}\label{eq:L}
 L_{\rho_0}(u):=\d_t u-\rho_0^{-1}\div\bigl(2\mu(\rho_0)D(u)+\lambda(\rho_0)\div u\,\Id\bigr)
 \end{equation}
 and 
 $$
 \begin{array}{lll}
 I_1(v,w)&\!\!\!:=\!\!\!&(\adj(DX_v)-\Id)\bigl(\mu(J_v^{-1}\rho_0)(D w\, A_v+{}^T\!A_v\,\nabla w)
+\lambda(J_v^{-1}\rho_0)({}^T\!A_v:\nabla w)\Id\bigr)\\[1.5ex]
 I_2(v,w)&\!\!\!:=\!\!\!&(\mu(J_v^{-1}\rho_0)-\mu(\rho_0))(D w\, A_v+{}^T\!A_v\,\nabla  w)
 +(\lambda(J_v^{-1}\rho_0)-\lambda(\rho_0))({}^T\!A_v:\nabla w)\Id\\[1.5ex]
 I_3(v,w)&\!\!\!:=\!\!\!&\mu(\rho_0)\bigl(D w(A_v-\Id)+{}^T\!(A_v-\Id)\nabla w\bigr)
 +\lambda(\rho_0)({}^T\!(A_v-\Id):\nabla w)\Id \\[1.5ex]
 I_4(v)&\!\!\!:=\!\!\!&-\adj(DX_v)P(\rho_0J_v^{-1}).
 \end{array}
$$

Therefore, in order to solve \eqref{eq:lagrangian} locally, it suffices to show that the map 
\begin{equation}\label{eq:Phi}
\Phi: v\longmapsto  u
\end{equation}
with $u$ the solution to 
$$\left\{\begin{array}{l}
L_{\rho_0}(u)= \rho_0^{-1}\div\bigl(I_2( v, v)+I_3( v, v)
+I_4(v,v)+I_5(v)\bigr),\\[1.5ex]
u|_{t=0}=u_0\end{array}\right.$$
 has a fixed point in $E_p(T)$ for small enough $T.$ 
 \medbreak
 
 As a first step, we have to study the properties of the linear Lam\'e operator $L_{\rho_0}.$
 This is done in the following subsection.
 

\subsection{Linear parabolic systems with rough coefficients}\label{s:linear}

As a warm up, we consider the following scalar heat equation
with variable coefficients:
\begin{equation}\label{eq:heatvar}
\d_tu-a\div(b\nabla u)=f.
\end{equation}
We assume that 
\begin{equation}\label{eq:nonzero}
\alpha:=\inf_{(t,x)\in[0,T]\times \R^n} (ab)(t,x)>0.
\end{equation} 
Let us first consider the smooth case.
\begin{prop}\label{p:heatsmooth} 
Assume that $a$ and $b$ are bounded  functions satisfying \eqref{eq:nonzero}
and such that
$b\nabla a$ and $a\nabla b$ are in $L^2(0,T;\dot B^{n/p}_{p,1})$ for some  $1<p<\infty.$
There exist two  constants $\kappa=\kappa(p)$  and $C=C(s,n,p)$ such that
 the solutions to \eqref{eq:heatvar} satisfy for all $t\in[0,T],$
$$
\|u\|_{L^\infty_t(\dot B^s_{p,1})}+\kappa\alpha\|u\|_{L^1_t(\dot B^{s+2}_{p,1})}
\leq \bigl(\|u_0\|_{\dot B^s_{p,1}}+\|f\|_{L_t^1(\dot B^s_{p,1})}\bigr)
\exp\biggl(\frac C{\alpha}\int_0^t\|(b\nabla a,a\nabla b)\|^2_{\dot B^{n/p}_{p,1}}\,d\tau\biggr)
$$
whenever $-\min(n/p,n/p')<s\leq n/p.$
\end{prop}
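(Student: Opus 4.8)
The plan is to run a Littlewood--Paley energy estimate on the equation \eqref{eq:heatvar}, frozen at each dyadic scale, and to absorb the commutator terms coming from the variable coefficients by a Gr\"onwall argument whose exponential weight carries the quantity $\int_0^t\|(b\nabla a,a\nabla b)\|^2_{\dot B^{n/p}_{p,1}}\,d\tau$. First I would apply $\ddj$ to \eqref{eq:heatvar} and write it as
$$
\d_t\ddj u-a\,\div(b\nabla\ddj u)=\ddj f+R_j,\qquad
R_j:=a\,\div\bigl(b\nabla\ddj u\bigr)-\ddj\bigl(a\,\div(b\nabla u)\bigr).
$$
Since $b$ is bounded below (together with $ab\geq\alpha$), the localized operator $-a\,\div(b\nabla\cdot)$ is elliptic, and a standard spectral-localization lemma (Bernstein inequalities plus the smoothing effect of the heat semigroup, as in \cite{BCD}, Chap.~2) gives the microlocal parabolic inequality
$$
\frac12\frac{d}{dt}\|\ddj u\|_{L^p}^2+c\,\alpha\,2^{2j}\|\ddj u\|_{L^p}^2
\lesssim \bigl(\|\ddj f\|_{L^p}+\|R_j\|_{L^p}\bigr)\|\ddj u\|_{L^p}^{2-1}\cdot\|\ddj u\|_{L^p}^{\,0},
$$
i.e. after dividing by $\|\ddj u\|_{L^p}$ and integrating in time,
$$
\|\ddj u(t)\|_{L^p}+c\,\alpha\,2^{2j}\!\int_0^t\!\|\ddj u\|_{L^p}
\leq \|\ddj u_0\|_{L^p}+\int_0^t\bigl(\|\ddj f\|_{L^p}+\|R_j\|_{L^p}\bigr).
$$
Multiplying by $2^{js}$ and summing over $j\in\Z$ yields the left-hand side of the claimed estimate plus the error term $\sum_j 2^{js}\int_0^t\|R_j\|_{L^p}$.

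The heart of the matter is therefore the commutator estimate: I need to show that
$$
\sum_{j\in\Z}2^{js}\|R_j(t)\|_{L^p}
\lesssim \|(b\nabla a,a\nabla b)\|_{\dot B^{n/p}_{p,1}}\|u\|_{\dot B^{s+1}_{p,1}}
$$
for $-\min(n/p,n/p')<s\leq n/p$. Writing $a\,\div(b\nabla u)=\div(ab\,\nabla u)-(b\nabla a+a\nabla b)\cdot\nabla u$ is not quite enough because of the $\div(ab\nabla u)$ piece; instead I would expand $R_j$ using Bony's paraproduct decomposition applied to the products $a\cdot\div(b\nabla u)$ and $b\cdot\nabla u$, collect the genuine commutators $[\ddj,T_a]$, $[\ddj, T_b]$ and the remainder terms, and estimate each using the standard commutator lemmas (the $[\ddj,T_g]$ bound costs one derivative on $g$ and gives $2^{-j(s+1)}c_j\|\nabla g\|_{\dot B^{n/p}_{p,1}}\|u\|_{\dot B^{s+1}_{p,1}}$ with $(c_j)\in\ell^1$ of norm $1$, and the paraproduct/remainder bounds require $s>-\min(n/p,n/p')$ and $s+1\leq n/p+1$, which is exactly the stated range). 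The upshot is that the combination $b\nabla a+a\nabla b=\nabla(ab)$ — more precisely its two pieces separately, which is why the hypothesis is phrased on $b\nabla a$ and $a\nabla b$ individually — controls all commutators, losing exactly one derivative, i.e. bounding things by $\|u\|_{\dot B^{s+1}_{p,1}}$.

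Finally I would close the estimate by interpolation and Young's inequality: $\|u\|_{\dot B^{s+1}_{p,1}}\leq C\|u\|_{\dot B^s_{p,1}}^{1/2}\|u\|_{\dot B^{s+2}_{p,1}}^{1/2}$, so the error term is bounded by
$$
\int_0^t\|(b\nabla a,a\nabla b)\|_{\dot B^{n/p}_{p,1}}\|u\|_{\dot B^s_{p,1}}^{1/2}\|u\|_{\dot B^{s+2}_{p,1}}^{1/2}\,d\tau
\leq \frac{\kappa\alpha}{2}\int_0^t\|u\|_{\dot B^{s+2}_{p,1}}\,d\tau
+\frac{C}{\alpha}\int_0^t\|(b\nabla a,a\nabla b)\|^2_{\dot B^{n/p}_{p,1}}\|u\|_{\dot B^s_{p,1}}\,d\tau.
$$
The first term is absorbed into the left-hand side, and Gr\"onwall's lemma applied to $t\mapsto\|u\|_{L^\infty_t(\dot B^s_{p,1})}+\tfrac{\kappa\alpha}{2}\|u\|_{L^1_t(\dot B^{s+2}_{p,1})}$ produces the exponential factor, giving the claim. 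The main obstacle is the commutator estimate, and within it the careful bookkeeping of the paraproduct remainders to see that the admissible range of $s$ is precisely $-\min(n/p,n/p')<s\leq n/p$; the rest is routine spectral localization and interpolation. One should also remark that the smoothness of $a,b$ is used only qualitatively (to justify the formal computations and the finiteness of the norms along the way), which is why the constants $\kappa,C$ depend only on $s,n,p$ — this is what makes the estimate useful for the rough-coefficient limit treated afterwards.
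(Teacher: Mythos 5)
Your overall architecture (dyadic localization, an $L^p$ parabolic estimate per block, a commutator bound losing one derivative, then interpolation, Young and Gr\"onwall) is the paper's, and your endgame is verbatim correct. But the step you identify as ``the heart of the matter'' is set up in a way that does not work, and the fix is precisely the rewriting you dismiss. First, your algebra is off: the correct identity is
$$
a\,\div(b\nabla u)=\div(ab\,\nabla u)-b\nabla a\cdot\nabla u,
$$
not $\div(ab\nabla u)-(b\nabla a+a\nabla b)\cdot\nabla u$. Far from being ``not quite enough,'' this identity is exactly what the paper uses: after rewriting the equation as $\d_tu-\div(ab\nabla u)=f-b\nabla a\cdot\nabla u$ and applying $\ddj$, the only commutator is $R_j=\div([\ddj,ab]\nabla u)$, which costs one derivative on $ab$, i.e. is controlled by $\|\nabla(ab)\|_{\dot B^{n/p}_{p,1}}=\|b\nabla a+a\nabla b\|_{\dot B^{n/p}_{p,1}}$ times $\|\nabla u\|_{\dot B^s_{p,1}}$ (Lemma \ref{l:com} with $\sigma=s$, $\nu=0$), while $b\nabla a\cdot\nabla u$ is a plain product estimate (Lemma \ref{l:prod}). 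Both land on $\|u\|_{\dot B^{s+1}_{p,1}}$ with the coefficient combinations appearing in the hypothesis. Your alternative --- paraproduct commutators $[\ddj,T_a]$ and $[\ddj,T_b]$ on the non--divergence-form product --- naturally produces $\nabla a$ and $\nabla b$ \emph{separately}, which the hypotheses do not control (only $b\nabla a$ and $a\nabla b$ are assumed to lie in $\dot B^{n/p}_{p,1}$); asserting at the end that ``the combination controls all commutators'' does not follow from the decomposition you propose.

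Second, the coercivity step is also tied to the divergence form. The lower bound $c_p\alpha 2^{2j}\|u_j\|_{L^p}^p$ comes from multiplying by $u_j|u_j|^{p-2}$ and using the positivity lemma for $-\div(c\nabla\cdot)$ with $c=ab\geq\alpha$ (Lemma 8 in the appendix of \cite{D5}; this is where $1<p<\infty$ enters --- it is an $L^p$ energy argument, not a semigroup smoothing argument). For the localized non--divergence-form operator $-a\,\div(b\nabla\ddj u)$ that you keep on the left-hand side, integration by parts against $u_j|u_j|^{p-2}$ generates terms in $\nabla a$, and you cannot conclude coercivity with constant $\alpha=\inf ab$ from the boundedness of $a,b$ alone (your displayed ``microlocal parabolic inequality'' also has inconsistent exponents, though the integrated version you write afterwards is the right target). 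Put the equation in divergence form first, and both difficulties disappear; the rest of your argument then goes through exactly as in the paper.
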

\begin{proof}
We first rewrite the equation for $u$ as follows:
$$
\d_tu-\div(ab\nabla u)=f-b\nabla a\cdot\nabla u,
$$
then localize the equation in the Fourier space, according to Littlewood-Paley decomposition:
$$
\d_tu_j-\div(ab\nabla u_j)=f_j-\ddj(b\nabla a\cdot\nabla u)+R_j
$$
with
$u_j:=\ddj u,$ $f_j:=\ddj f$ and 
$R_j:=\div([\ddj,ab]\nabla u).$
\medbreak
Next, we multiply the above equation by $u_j|u_j|^{p-2}$ and integrate over $\R^n.$
Taking advantage of Lemma 8 in the appendix of  \cite{D5}  (here $1<p<\infty$ comes into play) and of
H\"older inequality, we get for some constant $c_p$ depending only on $p$:
$$
\frac1p\frac d{dt}\|u_j\|_{L^p}^p+c_p\alpha2^{2j}\|u_j\|_{L^p}^p
\leq \|u_j\|_{L^p}^{p-1}\bigl(\|f_j\|_{L^p}+\|\ddj(b\nabla a\cdot\nabla u)\|_{L^p}+\|R_j\|_{L^p}\bigr),
$$
which, after time integration, leads to 
\begin{multline}\label{eq:uj}
\|u_j\|_{L^\infty_t(L^p)}+c_p\alpha2^{2j}\|u_j\|_{L^1_t(L^p)}\leq \|u_{0,j}\|_{L^p}\\+\|f_j\|_{L^1_t(L^p)}
+\int_0^t\Bigl(\|\ddj(b\nabla a\cdot\nabla u)\|_{L^p}+\|R_j\|_{L^p}\Bigr)\,d\tau.
\end{multline}
According to Lemmas \ref{l:prod} and \ref{l:com} in Appendix, there exist a positive constant $C$ and  some sequence $(c_j)_{j\in\Z}$ with
$\|c\|_{\ell^1(\Z)}=1,$ satisfying
\begin{equation}\label{eq:com1}
\|\ddj(b\nabla a\cdot\nabla u)\|_{L^p}+\|R_j\|_{L^p}\leq  Cc_j2^{-js}\bigl(\|b\nabla a\|_{\dot B^{n/p}_{p,1}}
+\|a\nabla b\|_{\dot B^{n/p}_{p,1}}\bigr)\|\nabla u\|_{\dot B^s_{p,1}}.
\end{equation}
Then inserting \eqref{eq:com1} in \eqref{eq:uj}, multiplying by $2^{js}$ and summing up over $j$ yields
\begin{multline}\label{eq:uj2}
\|u\|_{L^\infty_t(\dot B^s_{p,1})}+c_p\alpha \|u\|_{L^1_t(\dot B^{s+2}_{p,1})}
\leq \|u_0\|_{\dot B^s_{p,1}}+\|f\|_{L^1_t(\dot B^s_{p,1})}\\
+C\int_0^t \|(b\nabla a,a\nabla b)\|_{\dot B^{n/p}_{p,1}}\|u\|_{\dot B^{s+1}_{p,1}}\,d\tau.
\end{multline}
{}From  the interpolation inequality
\begin{equation}\label{eq:interpo}
\|u\|_{\dot B^{s+1}_{p,1}}\leq \|u\|_{\dot B^{s}_{p,1}}^{1/2}\|u\|_{\dot B^{s+2}_{p,1}}^{1/2},
\end{equation}
we gather that 
$$
C \|(b\nabla a,a\nabla b)\|_{\dot B^{n/p}_{p,1}}\|u\|_{\dot B^{s+1}_{p,1}}\leq \frac{\alpha c_p}2\|u\|_{\dot B^{s+2}_{p,1}}
+\frac{C^2}{2\alpha c_p} \|(b\nabla a,a\nabla b)\|_{\dot B^{n/p}_{p,1}}^2\|u\|_{\dot B^s_{p,1}}.
$$
So plugging this in \eqref{eq:uj2} and   applying Gronwall lemma completes the proof of the proposition.
\end{proof}
In the rough case where the coefficients are only in $\dot B^{n/p}_{p,1},$ the above proposition has to 
be modified as follows:
\begin{prop}\label{p:heatrough} 
Let  $a$ and $b$ be  bounded positive and  satisfy \eqref{eq:nonzero}. Assume that
$b\nabla a$ and $a\nabla b$ are in $L^\infty(0,T;\dot B^{n/p-1}_{p,1})$ with $1<p<\infty.$
There exist three constants   $\eta,$  $\kappa$ and $C$  such that if for some $m\in\Z$ we have 
\begin{eqnarray}\label{eq:positiva}
&\inf_{(t,x)\in[0,T]\times\R^n} \dot S_m(ab)(t,x)\geq \alpha/2,\\
\label{eq:smalla}
&\|(\Id-\dot S_m)(b\nabla a,a\nabla b)\|_{L^\infty_T(\dot B^{n/p-1}_{p,1})}\leq \eta\alpha
\end{eqnarray}
then the solution to \eqref{eq:heatvar} satisfies for all $t\in[0,T],$
$$
\|u\|_{L^\infty_t(\dot B^s_{p,1})}+\alpha\kappa\|u\|_{L^1_t(\dot B^{s+2}_{p,1})}
\leq \bigl(\|u_0\|_{\dot B^s_{p,1}}+\|f\|_{L_t^1(\dot B^s_{p,1})}\bigr)
\exp\biggl(\frac C{\alpha}\int_0^t\|\dot S_m(b\nabla a,a\nabla b)\|^2_{\dot B^{n/p}_{p,1}}\,d\tau\biggr)
$$
whenever 
\begin{equation}\label{eq:s}
-\min(n/p,n/p')<s\leq n/p-1.
\end{equation}
\end{prop}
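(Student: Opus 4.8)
The plan is to mimic the proof of Proposition \ref{p:heatsmooth}, the only genuine novelty being that $ab$ is no longer smooth enough (we merely have $\nabla(ab)=b\nabla a+a\nabla b\in L^\infty_T(\dot B^{n/p-1}_{p,1})$) to serve as the coefficient of the principal part; the idea is therefore to \emph{freeze that coefficient at its low frequencies}. Starting from the reformulation
\[
\d_tu-\div(ab\,\nabla u)=f-b\nabla a\cdot\nabla u
\]
used in the smooth case, I would split $ab=\dot S_m(ab)+(\Id-\dot S_m)(ab)$ and rewrite it as
\[
\d_tu-\div\bigl(\dot S_m(ab)\,\nabla u\bigr)=f-b\nabla a\cdot\nabla u+\div\bigl((\Id-\dot S_m)(ab)\,\nabla u\bigr).
\]
Since $\nabla\dot S_m(ab)=\dot S_m(b\nabla a)+\dot S_m(a\nabla b)$ and, $(\Id-\dot S_m)(ab)$ being spectrally supported away from the origin, $\|(\Id-\dot S_m)(ab)\|_{\dot B^{n/p}_{p,1}}\lesssim\|(\Id-\dot S_m)(b\nabla a,a\nabla b)\|_{\dot B^{n/p-1}_{p,1}}\leq\eta\alpha$ by \eqref{eq:smalla}, the splitting replaces the rough coefficient $ab$ by the \emph{smooth} one $\dot S_m(ab)$ — which stays uniformly elliptic thanks to \eqref{eq:positiva} — at the price of two small source terms.

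Next I would localise in frequency: with $u_j:=\ddj u$ and $R_j:=\div\bigl([\ddj,\dot S_m(ab)]\nabla u\bigr)$,
\[
\d_tu_j-\div\bigl(\dot S_m(ab)\,\nabla u_j\bigr)=\ddj f-\ddj(b\nabla a\cdot\nabla u)+\ddj\div\bigl((\Id-\dot S_m)(ab)\,\nabla u\bigr)+R_j.
\]
Multiplying by $|u_j|^{p-2}u_j$ and integrating over $\R^n$: since $\dot S_m(ab)$ is a bounded \emph{scalar} coefficient with $\dot S_m(ab)\geq\alpha/2$, an integration by parts together with Lemma~8 in the appendix of \cite{D5} (this is where $1<p<\infty$ is used) gives $-\int_{\R^n}\div(\dot S_m(ab)\nabla u_j)\,|u_j|^{p-2}u_j\,dx\geq c_p\alpha\,2^{2j}\|u_j\|_{L^p}^p$ — and, crucially, no regularity of $\dot S_m(ab)$ beyond this boundedness and positivity is needed for the coercivity, every term carrying $\nabla\dot S_m(ab)$ being thrown into the right-hand side. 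Hence, as for \eqref{eq:uj},
\[
\|u_j\|_{L^\infty_t(L^p)}+c_p\alpha\,2^{2j}\|u_j\|_{L^1_t(L^p)}\leq\|u_{0,j}\|_{L^p}+\|\ddj f\|_{L^1_t(L^p)}+\int_0^t\!\Bigl(\|\ddj(b\nabla a\cdot\nabla u)\|_{L^p}+2^j\|\ddj((\Id-\dot S_m)(ab)\nabla u)\|_{L^p}+\|R_j\|_{L^p}\Bigr)d\tau.
\]

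It then remains to bound, after multiplication by $2^{js}$ and summation over $j\in\Z$, the three error terms, and the heart of the matter is a \emph{second} low/high splitting of the rough coefficients. The low-frequency pieces $\dot S_m(b\nabla a),\dot S_m(a\nabla b)$ — and thus $\nabla\dot S_m(ab)$ — lie in $\dot B^{n/p}_{p,1}$; by the product and commutator estimates of Lemmas~\ref{l:prod} and~\ref{l:com}, exactly as in \eqref{eq:com1}, they produce a contribution $C\int_0^t\|\dot S_m(b\nabla a,a\nabla b)\|_{\dot B^{n/p}_{p,1}}\|u\|_{\dot B^{s+1}_{p,1}}\,d\tau$, which the interpolation inequality \eqref{eq:interpo} and Young's inequality split into a small multiple of $\alpha\|u\|_{L^1_t(\dot B^{s+2}_{p,1})}$ (absorbed on the left) plus the Gronwall term $\frac{C}{\alpha}\int_0^t\|\dot S_m(b\nabla a,a\nabla b)\|_{\dot B^{n/p}_{p,1}}^2\|u\|_{\dot B^s_{p,1}}\,d\tau$. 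The high-frequency pieces $(\Id-\dot S_m)(b\nabla a,a\nabla b)\in\dot B^{n/p-1}_{p,1}$ and $(\Id-\dot S_m)(ab)\in\dot B^{n/p}_{p,1}$ are of low regularity but \emph{small}: by the product laws $\dot B^{n/p-1}_{p,1}\times\dot B^{s+1}_{p,1}\to\dot B^s_{p,1}$ and $\dot B^{n/p}_{p,1}\times\dot B^{s+1}_{p,1}\to\dot B^{s+1}_{p,1}$ — both valid under \eqref{eq:s}, and with $p<2n$ needed at the endpoint $s=n/p-1$ — they contribute at most $C\eta\alpha\|u\|_{L^1_t(\dot B^{s+2}_{p,1})}$, again absorbed on the left provided $\eta$ is small enough. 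One is then left with
\[
\|u\|_{L^\infty_t(\dot B^s_{p,1})}+\kappa\alpha\|u\|_{L^1_t(\dot B^{s+2}_{p,1})}\leq\|u_0\|_{\dot B^s_{p,1}}+\|f\|_{L^1_t(\dot B^s_{p,1})}+\frac{C}{\alpha}\int_0^t\|\dot S_m(b\nabla a,a\nabla b)\|_{\dot B^{n/p}_{p,1}}^2\|u\|_{\dot B^s_{p,1}}\,d\tau,
\]
and Gronwall's lemma yields the estimate; continuity of $t\mapsto u(t)$ in $\dot B^s_{p,1}$ follows by the usual regularisation argument.

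I expect the genuinely delicate point to be making the two splittings cooperate: the high-frequency part of the diffusion coefficient must be carried to the right-hand side \emph{in product rather than expanded-divergence form}, so that $\ddj$ of it stays spectrally localised and the term costs only two derivatives of $u$ — hence is swallowed by the parabolic smoothing once \eqref{eq:smalla} holds — while the low-frequency part $\dot S_m(ab)$, smooth and uniformly elliptic by \eqref{eq:positiva}, is treated verbatim as the coefficient $ab$ in Proposition \ref{p:heatsmooth}. The only other care needed concerns the endpoints of the range \eqref{eq:s}, where one uses the sharp product law $\dot B^{n/p-1}_{p,1}\times\dot B^{n/p}_{p,1}\to\dot B^{n/p-1}_{p,1}$ (valid for $p<2n$) and the fact that the commutator estimate of Lemma~\ref{l:com} remains admissible down to $s>-\min(n/p,n/p')$.
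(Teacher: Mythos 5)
Your proposal is correct and follows essentially the same route as the paper: freeze the diffusion coefficient at its low frequencies $\dot S_m(ab)$, run the $L^p$ energy argument of the smooth case on the localized equation, split the rough first-order coefficients into a large-but-smooth low-frequency part (handled by interpolation and Gronwall) and a small-but-rough high-frequency part (absorbed by the left-hand side under \eqref{eq:smalla}), with the same product and commutator lemmas. Your explicit remark that $\|(\Id-\dot S_m)(ab)\|_{\dot B^{n/p}_{p,1}}\lesssim\|(\Id-\dot S_m)(b\nabla a,a\nabla b)\|_{\dot B^{n/p-1}_{p,1}}$ by spectral localization is a useful detail the paper leaves implicit.
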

\begin{proof}
Given the new assumptions, it is natural to replace \eqref{eq:com1} by the inequality
\begin{equation}\label{eq:com2}
\|\ddj(b\nabla a\cdot\nabla u)\|_{L^p}+\|R_j\|_{L^p}\leq  Cc_j2^{-js}\bigl(\|b\nabla a\|_{\dot B^{n/p-1}_{p,1}}
+\|a\nabla b\|_{\dot B^{n/p-1}_{p,1}}\bigr)\|\nabla u\|_{\dot B^{s+1}_{p,1}},
\end{equation}
which may be obtained by taking $\sigma=1$ and $\nu=1$ in Lemmas \ref{l:prod} and \ref{l:com}.
However, when bounding $R_j$,  in addition to \eqref{eq:s}, one has to assume that $p\leq n.$
Also, as it involves the highest regularity of $u,$ we cannot expect to absorb this ``remainder term''  any longer, unless $a\nabla b$ and $b\nabla a$
are {\it small} in $\dot B^{n/p-1}_{p,1}$ (which would correspond to the case that has been treated in the previous section).
So  we rather rewrite the heat equation as follows:
$$
\d_tu-\div(\dot S_m(ab)\nabla u)=f+\div((\Id-\dot S_m)(ab)\nabla u)-\dot S_m(b\nabla a)\cdot\nabla u-(\Id-\dot S_m)(b\nabla a)\cdot\nabla u.
$$
Now, using the infimum bound for $\dot S_m(ab)$ and arguing as for proving \eqref{eq:uj},  we get
$$\displaylines{
\|u_j\|_{L^\infty_t(L^p)}+c_p\alpha2^{2j}\|u_j\|_{L^1_t(L^p)}\leq \|u_{0,j}\|_{L^p}+\|f_j\|_{L^1_t(L^p)}
+\int_0^t\|\ddj\div((\Id-\dot S_m)(ab)\nabla u)\|_{L^p}\,d\tau
\hfill\cr\hfill+\int_0^t\Bigl(\|\ddj(\dot S_m(b\nabla a)\cdot\nabla u)\|_{L^p}
+\|\ddj((\Id-\dot S_m)(b\nabla a)\cdot\nabla u)\|_{L^p}+
\|\div([\dot S_m(ab),\ddj]\nabla u)\|_{L^p}\Bigr)\,d\tau.}
$$
The idea is to apply the procedure of the ``smooth'' case for the low frequency part of the coefficients (that is the part containing $\dot S_m$)
and the ``perturbation'' approach for the other part. 
More precisely, appealing to  Lemmas \ref{l:prod} and \ref{l:com},  we get
under Condition \eqref{eq:s} and  for some sequence $(c_j)_{j\in\Z}$ with $\|c\|_{\ell^1(\Z)}=1$:
$$\begin{array}{lll}
\|\ddj\div((\Id-\dot S_m)(ab)\nabla u)\|_{L^p}&\!\!\!\lesssim\!\!\!&c_j2^{-js}
\|(\Id-\dot S_m)(ab)\|_{\dot B^{n/p}_{p,1}}\|\nabla u\|_{\dot B^{s+1}_{p,1}},\\[1ex]
\|\ddj(\dot S_m(b\nabla a)\cdot\nabla u)\|_{L^p}&\!\!\!\lesssim\!\!\!&c_j2^{-js}
\|\dot S_m(b\nabla a)\|_{\dot B^{n/p}_{p,1}}\|\nabla u\|_{\dot B^{s}_{p,1}},\\[1ex]
\|\ddj((\Id-\dot S_m)(b\nabla a)\cdot\nabla u)\|_{L^p}&\!\!\!\lesssim\!\!\!& c_j2^{-js}
\|(\Id-\dot S_m)(b\nabla a)\|_{\dot B^{n/p-1}_{p,1}}\|\nabla u\|_{\dot B^{s+1}_{p,1}},\\[1ex]
\|\div([\dot S_m(ab),\ddj]\nabla u)\|_{L^p}&\!\!\!\lesssim\!\!\!&c_j2^{-js}
\|\dot S_m\nabla(ab)\|_{\dot B^{n/p}_{p,1}}\|\nabla u\|_{\dot B^{s}_{p,1}}.
\end{array}
$$
Let us plug those four inequalities in the above inequality for $u_j.$ After multiplying 
by $2^{js}$ and summing up over $j,$ we get
$$
\displaylines{\|u\|_{L^\infty_t(\dot B^s_{p,1})}+c_p\alpha \|u\|_{L^1_t(\dot B^{s+2}_{p,1})}
\leq \|u_0\|_{\dot B^s_{p,1}}+\|f\|_{L^1_t(\dot B^s_{p,1})}\hfill\cr\hfill
+C\bigl(\|(\Id-\dot S_m)(ab)\|_{L^\infty_t(\dot B^{n/p}_{p,1})}
+\|(\Id-\dot S_m)(b\nabla a)\|_{L^\infty_t(\dot B^{n/p-1}_{p,1})}\bigr)\|u\|_{L^1_t(\dot B^{s+2}_{p,1})}
\hfill\cr\hfill+C\int_0^t\|\dot S_m(a\nabla b,b\nabla a)\|_{\dot B^{n/p}_{p,1}}\|\nabla u\|_{\dot B^{s}_{p,1}}\,d\tau.}
$$
It is clear that, under Condition \eqref{eq:smalla}, the second line may be absorbed by the left-hand side. Hence  the desired inequality follows 
from the interpolation inequality \eqref{eq:interpo}, exactly as in the smooth case.
\end{proof}

We now look at the following Lam\'e system with nonconstant coefficients:
\begin{equation}\label{eq:lame-var}
\d_tu-2a\div(\mu D(u))-b\nabla(\lambda\div u)=f.
\end{equation}
Note that $u$ and $f$ are valued in $\R^n.$
We assume throughout that the following uniform ellipticity condition is satisfied: 
\begin{equation}\label{eq:ellipticity}
\alpha:=\min\Bigl(\inf_{(t,x)\in[0,T]\times\R^n} (a\mu)(t,x),\inf_{(t,x)\in[0,T]\times\R^n}
(2a\mu+b\lambda)(t,x)\Bigr)>0.
\end{equation}
Let us first study the  ``smooth case'':
\begin{prop}
\label{p:lamesmooth} 
Assume that $a,$ $b,$ $\lambda$ and $\mu$  are bounded  functions satisfying \eqref{eq:ellipticity}
and such that $a\nabla\mu,$ $b\nabla\lambda,$
$\mu\nabla a$ and $\lambda\nabla b$ are in $L^2(0,T;\dot B^{n/p}_{p,1})$ for some  $1<p<\infty.$
There exists a constant $C$ such that
 the solutions to \eqref{eq:lame-var} satisfy for all $t\in[0,T],$
$$\displaylines{\quad
\|u\|_{L^\infty_t(\dot B^s_{p,1})}+\alpha\|u\|_{L^1_t(\dot B^{s+2}_{p,1})}
\hfill\cr\hfill\leq C \bigl(\|u_0\|_{\dot B^s_{p,1}}+\|f\|_{L_t^1(\dot B^s_{p,1})}\bigr)
\exp\biggl(\frac C{\alpha}\int_0^t\|(\mu\nabla a,a\nabla \mu,\lambda\nabla b,b\nabla\lambda)\|^2_{\dot B^{n/p}_{p,1}}\,d\tau\biggr)\quad}
$$
whenever $-\min(n/p,n/p')<s\leq n/p.$
\end{prop}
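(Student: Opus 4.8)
The plan is to follow, almost line by line, the proof of Proposition \ref{p:heatsmooth}; the only genuinely new ingredient is a vector-valued (``Lam\'e type'') substitute for the $L^p$ coercivity lemma (Lemma~8 of \cite{D5}) used there. First I would recast \eqref{eq:lame-var} in divergence form. Since $2D(u)=Du+\nabla u$, one has $2a\div(\mu D(u))=a\div\bigl(\mu(Du+\nabla u)\bigr)$, and the Leibniz rule lets us move one derivative onto the coefficients, so that \eqref{eq:lame-var} is equivalent to
$$
\d_tu-\div\bigl(a\mu(Du+\nabla u)\bigr)-\nabla(b\lambda\div u)=f-\mu(Du+\nabla u)\nabla a-\lambda\,\div u\,\nabla b,
$$
whose right-hand side is, besides $f$, a sum of products of the coefficients $\mu\nabla a,\lambda\nabla b$ (which by hypothesis lie in $L^\infty_T(\dot B^{n/p}_{p,1})\hookrightarrow L^\infty_T(L^\infty)$) with first-order derivatives of $u$. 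This is the exact analogue of the rewriting $\d_tu-\div(ab\nabla u)=f-b\nabla a\cdot\nabla u$ of the scalar case, the principal part being now a Lam\'e operator whose coefficients satisfy $a\mu\geq\alpha$ and $2a\mu+b\lambda\geq\alpha$ thanks to \eqref{eq:ellipticity}.

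Next I would localize in frequency: applying $\ddj$ and commuting it past $a\mu$ and $b\lambda$ gives, with $u_j:=\ddj u$,
$$
\d_tu_j-\div\bigl(a\mu(Du_j+\nabla u_j)\bigr)-\nabla(b\lambda\div u_j)=f_j+g_j+R_j,
$$
where $g_j:=-\ddj\bigl(\mu(Du+\nabla u)\nabla a+\lambda\,\div u\,\nabla b\bigr)$ and $R_j:=\div\bigl([\ddj,a\mu](Du+\nabla u)\bigr)+\nabla\bigl([\ddj,b\lambda]\div u\bigr)$. Writing $\nabla(a\mu)=\mu\nabla a+a\nabla\mu$, $\nabla(b\lambda)=\lambda\nabla b+b\nabla\lambda$ and using Lemmas \ref{l:prod} and \ref{l:com} exactly as for \eqref{eq:com1}, one gets, for $-\min(n/p,n/p')<s\leq n/p$ and some $(c_j)_{j\in\Z}$ with $\|c\|_{\ell^1(\Z)}=1$,
$$
\|g_j\|_{L^p}+\|R_j\|_{L^p}\leq Cc_j2^{-js}\,\bigl\|(\mu\nabla a,a\nabla\mu,\lambda\nabla b,b\nabla\lambda)\bigr\|_{\dot B^{n/p}_{p,1}}\,\|\nabla u\|_{\dot B^s_{p,1}}.
$$

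The heart of the matter is then the $L^p$ energy estimate. Multiplying the equation for $u_j$ by $|u_j|^{p-2}u_j$ and integrating, one needs the coercivity bound: there is $\kappa=\kappa(p,n)>0$ such that for every vector field $w$ whose Fourier transform is supported in an annulus $2^j\cC$,
$$
\int_{\R^n}\Bigl(-\div\bigl(a\mu(Dw+\nabla w)\bigr)-\nabla(b\lambda\div w)\Bigr)\cdot|w|^{p-2}w\,dx\ \geq\ \kappa\,\alpha\,2^{2j}\|w\|_{L^p}^p.
$$
After an integration by parts the left-hand side becomes $\int a\mu\,(Dw+\nabla w):\nabla(|w|^{p-2}w)+\int b\lambda\,\div w\,\div(|w|^{p-2}w)$, and --- as in \cite{D5} --- up to terms having a favourable sign when $p\geq 2$, the two integrands are nonnegative multiples of $|w|^{p-2}|\nabla w|^2$ and of $|w|^{p-2}(\div w)^2$ respectively. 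Using $a\mu\geq\alpha$ for the first, the splitting $b\lambda=(2a\mu+b\lambda)-2a\mu$ with $2a\mu+b\lambda\geq\alpha$ for the second, together with the elementary identity $\int 2|D(w)|^2=\int|\nabla w|^2+\int(\div w)^2$ and Bernstein's inequality, one gets the coercivity with the constant $\alpha$ of \eqref{eq:ellipticity}. Granted this bound, H\"older's inequality, time integration, multiplication by $2^{js}$ and summation over $j$ produce an inequality of the same shape as \eqref{eq:uj2} (with $\|(\mu\nabla a,a\nabla\mu,\lambda\nabla b,b\nabla\lambda)\|_{\dot B^{n/p}_{p,1}}$ in the remainder); the interpolation inequality \eqref{eq:interpo} absorbs half of $\kappa\alpha\|u\|_{\dot B^{s+2}_{p,1}}$ into the left-hand side, and Gronwall's lemma concludes, exactly as in the scalar case.

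The main obstacle is the coercivity bound just stated: unlike in the scalar heat case, a purely pointwise estimate of $2a\mu|D(w)|^2+b\lambda(\div w)^2$ only yields the lower bound $\min(\inf 2a\mu,\inf(2a\mu+nb\lambda))$, which may well be negative when $\lambda<0$, so one must genuinely exploit the global structure of the Lam\'e operator --- namely that $\div w$ and the divergence-free part of $w$ see the two different diffusion rates $2a\mu+b\lambda$ and $a\mu$, which is precisely what underlies Proposition \ref{p:lamewhole} in the constant-coefficient case. Concretely this is done either through the integration-by-parts identity above (freezing $a\mu,b\lambda$ at frequency $2^j$, the resulting errors being absorbable together with the commutators $R_j$), or by estimating $\div u$ separately, it satisfying a scalar heat equation with diffusion coefficient $2a\mu+b\lambda$ to which Proposition \ref{p:heatsmooth} applies. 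As usual, the case $1<p<2$ additionally requires the standard care with the sign of $p-2$ in $\nabla(|w|^{p-2}w)$, handled exactly as in \cite{D5}.
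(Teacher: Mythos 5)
Your central step --- the coercivity bound
$\int\bigl(-\div(a\mu(Dw+\nabla w))-\nabla(b\lambda\div w)\bigr)\cdot|w|^{p-2}w\,dx\geq\kappa\alpha2^{2j}\|w\|_{L^p}^p$
for spectrally localized $w$ --- is not established, and the justification you offer does not close it. After the integration by parts, the ``good'' part of the integrand is $|w|^{p-2}\bigl(2a\mu|D(w)|^2+b\lambda(\div w)^2\bigr)$, and since \eqref{eq:ellipticity} only requires $a\mu$ and $2a\mu+b\lambda$ to be bounded below by $\alpha$, the coefficient $b\lambda$ may be negative, so the second term has the wrong sign. The identity $\int2|D(w)|^2=\int|\nabla w|^2+\int(\div w)^2$ that you invoke to repair this is a \emph{global} identity obtained by two integrations by parts; it is not pointwise, and it breaks down as soon as the weight $|w|^{p-2}$ and the variable coefficients $a\mu,$ $b\lambda$ are present, because the integrations by parts then produce terms involving $\nabla(|w|^{p-2})$ and the gradients of the coefficients which have no sign and are not small. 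You flag this obstacle yourself, but neither of the two fixes you sketch is carried out, and the first (freezing the coefficients at frequency $2^j$) does not resolve it: even for \emph{constant} coefficients with $\mu+\lambda<0,$ the weighted vector coercivity is not a consequence of Lemma 8 of \cite{D5}, which is a scalar statement about $-\Delta.$

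The paper avoids the vector coercivity altogether by diagonalizing the principal symbol: it sets $d:=|D|^{-1}\div u$ and $\Omega:=|D|^{-1}\curl u,$ and since $A(D)=|D|^{-1}\div$ and $B(D)=|D|^{-1}\curl$ are degree-$0$ Fourier multipliers, estimating $u$ in homogeneous Besov norms is equivalent to estimating $(d,\Omega).$ Applying $A(D)$ and $B(D)$ to \eqref{eq:lame-var} yields two \emph{scalar} heat equations with diffusion coefficients $2a\mu+b\lambda$ and $a\mu$ respectively --- each bounded below by $\alpha$ --- plus commutators $[A(D),a\mu]\Delta u,$ etc., which are lower order and controlled by Lemma \ref{l:com2}; the scalar argument of Proposition \ref{p:heatsmooth} (with \eqref{eq:interpo} and Gronwall) then applies to each equation. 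Your second fallback (``estimate $\div u$ separately'') points in this direction but is incomplete as written: $\div u$ alone says nothing about the divergence-free part of $u,$ so the vorticity part must be estimated as well, and one must work with the degree-$0$ multipliers rather than with $\div u$ itself in order to stay at the regularity level of $u$ and to make the commutator terms harmless. To repair your proof, replace the direct vector energy estimate by this $(d,\Omega)$ reduction.
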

\begin{proof}
We introduce the following functions:
$$
d:=|D|^{-1}\div u\quad\hbox{and}\quad
\Omega:=|D|^{-1}\curl u\quad\hbox{with }\  (\curl u)_{ij}:=\d_iu^j-\d_ju^i.
$$
Owing to the use of homogeneous Besov space, and
because the Fourier multipliers $A(D):=|D|^{-1}\div$ and $B(D):=|D|^{-1}\curl$
are of degree $0,$ it is equivalent to estimate
$u$ or $(d,\Omega)$ in $L^\infty_T(\dot B^s_{p,1})\cap L^1_T(\dot B^{s+2}_{p,1}).$
So the basic idea is to show that $d$ and $\Omega$ satisfy heat equations 
similar to \eqref{eq:heatvar}. More precisely, applying $A(D)$ to \eqref{eq:lame-var} yields
\begin{multline}\label{eq:d}
\d_td-(2a\mu+b\lambda)\Delta d
=A(D) (f+2a\nabla\mu\cdot D(u)+b\nabla\lambda\,\div u)\\
+[A(D),a\mu]\Delta u+[A(D),a\mu+b\lambda]\nabla\div u.
\end{multline}
Given Condition \eqref{eq:ellipticity}, we see that arguing exactly as for proving \eqref{eq:uj2} 
and because $A(D)$ maps $\dot B^s_{p,1}$ in itself,
$$\displaylines{
\|d\|_{L^\infty_t(\dot B^s_{p,1})}+\kappa\alpha \|d\|_{L^1_t(\dot B^{s\!+\!2}_{p,1})}
\!\leq\! \|d_0\|_{\dot B^s_{p,1}}+\!\|A(D)f\|_{L^1_t(\dot B^s_{p,1})}
+C\!\!\int_0^t\!\|2a\nabla\mu\cdot D(u)+b\nabla\lambda\div u\|_{\dot B^s_{p,1}}d\tau\hfill\cr\hfill
+C\int_0^t\|[A(D),a\mu]\Delta u+[A(D),a\mu+b\lambda]\nabla\div u\|_{\dot B^s_{p,1}}\,d\tau
+C\int_0^t \|\nabla(2a\mu+b\lambda)\|_{\dot B^{n/p}_{p,1}}\|\nabla u\|_{\dot B^s_{p,1}}\,d\tau.}
$$
Note that  applying Lemma \ref{l:com2} with $\sigma=s-1,$ $\nu=0$ and
Lemma \ref{l:prod} with $\sigma=s$ and $\nu=0$ yields
$$
\begin{array}{lll}
\|[A(D),a\mu]\Delta u\|_{\dot B^s_{p,1}}&\leq& C\|\nabla(a\mu)\|_{\dot B^{n/p}_{p,1}}\|\Delta u\|_{\dot B^{s-1}_{p,1}},\\[1ex]
\|a\nabla\mu\cdot D(u)\|_{\dot B^s_{p,1}}&\leq&C\|a\nabla\mu\|_{\dot B^{n/p}_{p,1}}\|\nabla u\|_{\dot B^s_{p,1}},
\end{array}
$$
and analogous  estimates for 
$[A(D),a\mu+b\lambda]\nabla\div u$ and $b\nabla\lambda\div u.$
\smallbreak
Similarly, the vorticity part $\Omega$ of $u$ satisfies
$$
\displaylines{\d_t\Omega-a\mu\Delta\Omega
=B(D) (f+2a\nabla\mu\cdot D(u)+b\nabla\lambda\div u)
+[B(D),a\mu]\Delta u+[B(D),a\mu+b\lambda]\nabla\div u.}
$$
So arguing exactly as for bounding $d,$ and resorting to 
the interpolation inequality  \eqref{eq:interpo} and to Gronwall lemma, 
we easily get the desired inequality. It is just a matter of following the proof for the case of the heat equation.
\end{proof}

Let us finally focus on the ``rough  case'' where the coefficients of \eqref{eq:lame-var} are only in $L_T^\infty(\dot B^{n/p}_{p,1}).$ 
\begin{prop}\label{p:lamerough} 
Let  $a,$  $b,$ $\lambda$ and $\mu$  be  bounded functions  satisfying  \eqref{eq:ellipticity}. Assume that
$a\nabla \mu,$ $b\nabla\lambda,$ $\mu\nabla a$ and $\lambda\nabla b$ are in $L^\infty(0,T;\dot B^{n/p-1}_{p,1})$ for some $1<p<\infty.$
There exist two constants   $\eta$ and $\kappa$  such that if for some $m\in\Z$ we have 
\begin{eqnarray}\label{eq:positivebis}
&\min\Bigl(\inf_{(t,x)\in[0,T]\times\R^n} \dot S_m(2a\mu+b\lambda)(t,x),
\inf_{(t,x)\in[0,T]\times\R^n} \dot S_m(a\mu)(t,x)\Bigr)
\geq\Frac\alpha2,\\
\label{eq:smallabis}
&\|(\Id-\dot S_m)(\mu\nabla a,a\nabla \mu,\lambda\nabla b,b\nabla\lambda)\|_{L^\infty_T(\dot B^{n/p-1}_{p,1})}\leq \eta\alpha
\end{eqnarray}
then the solutions to \eqref{eq:lame-var} satisfy for all $t\in[0,T],$
$$\displaylines{\quad
\|u\|_{L^\infty_t(\dot B^s_{p,1})}+\alpha\|u\|_{L^1_t(\dot B^{s+2}_{p,1})}
\hfill\cr\hfill\leq C \bigl(\|u_0\|_{\dot B^s_{p,1}}+\|f\|_{L_t^1(\dot B^s_{p,1})}\bigr)
\exp\biggl(\frac C{\alpha}\int_0^t\|\dot S_m(\mu\nabla a,a\nabla \mu,\lambda\nabla b,b\nabla\lambda)\|^2_{\dot B^{n/p}_{p,1}}\,d\tau\biggr)\quad}
$$
whenever $-\min(n/p,n/p')<s\leq n/p-1.$
\end{prop}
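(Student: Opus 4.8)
The plan is to combine the Helmholtz-type reduction used in the proof of Proposition \ref{p:lamesmooth} with the low/high frequency splitting of the coefficients introduced in the proof of Proposition \ref{p:heatrough}. As a first step, set $d:=|D|^{-1}\div u$ and $\Omega:=|D|^{-1}\curl u$. Since the Fourier multipliers $A(D)=|D|^{-1}\div$ and $B(D)=|D|^{-1}\curl$ are homogeneous of degree $0$ and thus bounded on every $\dot B^\sigma_{p,1},$ controlling $u$ in $L^\infty_t(\dot B^s_{p,1})\cap L^1_t(\dot B^{s+2}_{p,1})$ is equivalent to controlling the pair $(d,\Omega)$ in that space. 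Applying $A(D)$ and $B(D)$ to \eqref{eq:lame-var} produces, exactly as in \eqref{eq:d}, two scalar heat-type equations
$$\d_t d-(2a\mu+b\lambda)\Delta d=A(D)\bigl(f+2a\nabla\mu\cdot D(u)+b\nabla\lambda\,\div u\bigr)+[A(D),a\mu]\Delta u+[A(D),a\mu+b\lambda]\nabla\div u$$
and the analogous one for $\Omega,$ with leading coefficient $a\mu$ and $B(D)$ in place of $A(D).$ By \eqref{eq:positivebis}, the $\dot S_m$-truncations of these leading coefficients are bounded below by $\alpha/2.$

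Next, following the argument of Proposition \ref{p:heatrough} almost verbatim, I would rewrite the principal term as $(2a\mu+b\lambda)\Delta d=\div(\dot S_m(2a\mu+b\lambda)\nabla d)+\div((\Id-\dot S_m)(2a\mu+b\lambda)\nabla d)-\nabla(2a\mu+b\lambda)\cdot\nabla d,$ and likewise for $\Omega;$ here $\nabla(2a\mu+b\lambda)=2(a\nabla\mu+\mu\nabla a)+b\nabla\lambda+\lambda\nabla b$ is exactly the combination controlled by the hypotheses. This casts the equation for $d$ in the form $\d_t d-\div(\dot S_m(2a\mu+b\lambda)\nabla d)=g_d,$ where $g_d$ collects the external source $A(D)f,$ the genuine source terms $A(D)(a\nabla\mu\cdot D(u))$ and $A(D)(b\nabla\lambda\,\div u),$ the two $A(D)$-commutators with $a\mu$ and $a\mu+b\lambda,$ the remainder $\div((\Id-\dot S_m)(2a\mu+b\lambda)\nabla d),$ and the drift $\nabla(2a\mu+b\lambda)\cdot\nabla d.$ One then localizes by $\ddj,$ tests the localized equation against $d_j|d_j|^{p-2}$ with $d_j:=\ddj d,$ invokes Lemma~8 in the appendix of \cite{D5} (this is where $1<p<\infty$ comes in) together with the lower bound \eqref{eq:positivebis}, and integrates in time, exactly as in \eqref{eq:uj}.

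The heart of the proof is then the $\dot B^s_{p,1}$ estimate of $g_d$ and of its analogue $g_\Omega.$ Each term is split by applying $\dot S_m$ and $\Id-\dot S_m$ to the coefficients $\mu\nabla a,a\nabla\mu,\lambda\nabla b,b\nabla\lambda$ (equivalently to $\nabla(a\mu)$ and $\nabla(a\mu+b\lambda)$). The pieces carrying $\dot S_m$ lie in $\dot B^{n/p}_{p,1},$ so by the product and commutator estimates (Lemmas \ref{l:prod}, \ref{l:com}, \ref{l:com2}) under the restriction \eqref{eq:s} they are bounded by $C\|\dot S_m(\mu\nabla a,a\nabla\mu,\lambda\nabla b,b\nabla\lambda)\|_{\dot B^{n/p}_{p,1}}\|u\|_{\dot B^{s+1}_{p,1}},$ which — after the interpolation inequality \eqref{eq:interpo} and the usual absorption of half of the $\dot B^{s+2}_{p,1}$ norm into the left-hand side — feed the Gronwall exponential exactly as in the smooth case. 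The pieces carrying $\Id-\dot S_m$ involve the top-order norm $\|u\|_{\dot B^{s+2}_{p,1}},$ but thanks to \eqref{eq:smallabis} — and, for the leading-coefficient remainder $\div((\Id-\dot S_m)(2a\mu+b\lambda)\nabla d),$ the elementary bound $\|(\Id-\dot S_m)(2a\mu+b\lambda)\|_{\dot B^{n/p}_{p,1}}\lesssim\|(\Id-\dot S_m)\nabla(2a\mu+b\lambda)\|_{\dot B^{n/p-1}_{p,1}}$ — they come with the small factor $\eta\alpha$ and, with $\eta$ chosen small enough, are absorbed into $\alpha\kappa\|d\|_{L^1_t(\dot B^{s+2}_{p,1})}.$ Summing over $j,$ adding the estimate obtained for $\Omega$ in the same way, and applying Gronwall's lemma yields the claimed inequality for $u.$

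I expect the main obstacle to be the bookkeeping: one must check that every term in $g_d$ and $g_\Omega$ that genuinely involves the highest regularity $\dot B^{s+2}_{p,1}$ of $u$ is one of the $\Id-\dot S_m$ pieces — hence carries the small constant $\eta\alpha$ and can be absorbed — while every remaining term is at worst of order $\dot B^{s+1}_{p,1}$ and is handled by interpolation plus Gronwall; and that the index window \eqref{eq:s} is exactly what makes the product and commutator lemmas close at the $\dot B^{n/p-1}_{p,1}$ regularity available for the coefficients, just as in the proof of Proposition \ref{p:heatrough}. Beyond this careful accounting, no idea is required that is not already contained in Propositions \ref{p:lamesmooth} and \ref{p:heatrough}.
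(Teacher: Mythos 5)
Your proposal is correct and follows essentially the same route as the paper: the same reduction to heat-type equations for $d=|D|^{-1}\div u$ and $\Omega=|D|^{-1}\curl u,$ the same $\dot S_m$ versus $\Id-\dot S_m$ splitting of the coefficients in every source, commutator and drift term, the same localized $L^p$ energy estimate via Lemma~8 of \cite{D5}, and the same absorption/interpolation/Gronwall endgame under \eqref{eq:s}, \eqref{eq:positivebis} and \eqref{eq:smallabis}. The only cosmetic difference is that you recast the equation as $\d_td-\div(\dot S_m c\,\nabla d)=g_d$ before localizing, whereas the paper writes the localized equation for $d_j$ directly with the commutator $\div([\ddj,\dot S_mc]\nabla d)$ appearing explicitly; the bound $\|(\Id-\dot S_m)c\|_{\dot B^{n/p}_{p,1}}\lesssim\|(\Id-\dot S_m)\nabla c\|_{\dot B^{n/p-1}_{p,1}}$ that you make explicit is likewise implicit in the paper's absorption step.
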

\begin{proof}
As for the heat equation, we split the coefficients of the system 
into a smooth (but large) low frequency part  and a rough (but small) high frequency part. 
It turns out to be  more convenient  to work  directly on the equations for $d$ and $\Omega.$
More precisely, as regards $d,$ we write (starting from \eqref{eq:d} and denoting $c:=2a\mu+b\lambda$) that 
$$
\displaylines{\d_td-\div(c\nabla d)
=-\nabla c\cdot\nabla d+A(D) (f+2a\nabla\mu\cdot D(u)+b\nabla\lambda\,\div u)\hfill\cr\hfill
+[A(D),a\mu]\Delta u+[A(D),a\mu+b\lambda]\nabla\div u,}
$$ 
whence, denoting $d_j:=\ddj d,$
$$
\displaylines{\d_td_j-\div(\dot S_m c\nabla d_j)
=\div([\ddj,\dot S_mc]\nabla d)\hfill\cr\hfill+\ddj\Bigl(\div((\Id-\dot S_m)c\nabla d)
-\dot S_m\nabla c\cdot\nabla d-(\Id-\dot S_m)\nabla c\cdot\nabla d\Bigr)
\hfill\cr\hfill
 +\ddj A(D) \bigl(f+2\dot S_m(a\nabla\mu)\cdot D(u)+2(\Id-\dot S_m)(a\nabla\mu)\cdot D(u)
  \hfill\cr\hfill +\dot S_m(b\nabla\lambda)\,\div u
   +(\Id-\dot S_m)(b\nabla\lambda)\,\div u\bigr)
+\ddj\Bigl([A(D),\dot S_m(a\mu)]\Delta u  \hfill\cr\hfill+[A(D),\dot S_m(a\mu+b\lambda)]\nabla\div u
+[A(D),(\Id-\dot S_m)(a\mu)]\Delta u+[A(D),(\Id-\dot S_m)(a\mu+b\lambda)]\nabla\div u\Bigr).}
$$
Under Condition \eqref{eq:s}, Lemmas \ref{l:prod}, \ref{l:com} and \ref{l:com2} imply that 
$$
\begin{array}{lll}
\|\div([\ddj,\dot S_mc]\nabla d)\|_{L^p}&\lesssim& c_j2^{-js}\|\dot S_m\nabla c\|_{\dot B^{n/p}_{p,1}}\|\nabla d\|_{\dot B^s_{p,1}},\\[1ex]
\|\ddj\div((\Id-\dot S_m)c\nabla d)\|_{L^p}&\lesssim& c_j2^{-js}\|(\Id-\dot S_m)c\|_{\dot B^{n/p}_{p,1}}\|\nabla d\|_{\dot B^{s+1}_{p,1}},\\[1ex]
\|\ddj(\dot S_m\nabla c\cdot\nabla d)\|_{L^p}&\lesssim& c_j2^{-js}\|\dot S_m\nabla c\|_{\dot B^{n/p}_{p,1}}\|\nabla d\|_{\dot B^s_{p,1}},\\[1ex]
\|\ddj((\Id-\dot S_m)\nabla c\cdot\nabla d)&\lesssim& c_j2^{-js}\|(\Id-\dot S_m)\nabla c\|_{\dot B^{n/p-1}_{p,1}}\|\nabla d\|_{\dot B^{s+1}_{p,1}},\\[1ex]
\|\ddj [A(D),\dot S_m(a\mu)]\Delta u\|_{L^p}&\lesssim& c_j2^{-js}\|\nabla\dot S_m(a\mu)\|_{\dot B^{n/p}_{p,1}}\|\Delta u\|_{\dot B^{s-1}_{p,1}},\\[1ex]
\|\ddj [A(D),(\Id-\dot S_m)(a\mu)]\Delta u\|_{L^p}&\lesssim& c_j2^{-js}\|\nabla(\Id-\dot S_m)(a\mu)\|_{\dot B^{n/p-1}_{p,1}}\|\Delta u\|_{\dot B^{s}_{p,1}},
\end{array}
$$
and similar estimates for 
$$\displaylines{
\ddj A(D)(\dot S_m(a\nabla\mu)\cdot D(u)),\quad \ddj A(D)((\Id-\dot S_m)(a\nabla\mu)\cdot D(u)),\cr
\ddj A(D)(\dot S_m(b\nabla\lambda)\,\div u),\quad
\ddj A(D)((\Id-\dot S_m)(b\nabla\lambda)\,\div u),\cr
\ddj [A(D),\dot S_m(a\mu+b\lambda)]\nabla\div u,\quad
\ddj[A(D),(\Id-\dot S_m)(a\mu+b\lambda)]\nabla\div u.}$$
The $\curl$ part $\Omega$ of the velocity may be treated in the same way. 
Therefore we get 
$$\displaylines{\quad
\|u\|_{L_t^\infty(\dot B^s_{p,1})}+\alpha \|u\|_{L_t^1(\dot B^{s+2}_{p,1})}\lesssim
\|u_0\|_{\dot B^s_{p,1}}+\|f\|_{L^1_t(\dot B^s_{p,1})}\hfill\cr\hfill+\int_0^t\|\dot S_m(a\nabla\mu,\mu\nabla a,b\nabla\lambda,\lambda\nabla b)\|_{\dot B^{n/p}_{p,1}}\|u\|_{\dot B^{s+1}_{p,1}}\,d\tau\hfill\cr\hfill
+\int_0^t\|(\Id-\dot S_m)(a\nabla\mu,\mu\nabla a,b\nabla\lambda,\lambda\nabla b)\|_{\dot B^{n/p-1}_{p,1}}\|u\|_{\dot B^{s+2}_{p,1}}\,d\tau.\quad}
$$
Obviously the last term may be absorbed by the left-hand side if $\eta$ is small enough in \eqref{eq:smallabis}
and the last-but-one term may be handled by interpolation according to \eqref{eq:interpo}.
So applying Gronwall lemma yields the desired inequality. 
\end{proof}

For the sake of completeness, we still have to justify the existence of a solution to \eqref{eq:lame-var}.
More precisely, we want to establish the following result:
\begin{prop}\label{p:lameexistence} 
Let $p$ be in $(1,+\infty).$ 
Let  $a,$  $b,$ $\lambda$ and $\mu$  be  bounded functions  satisfying  \eqref{eq:ellipticity}. 
Assume in addition that there exist 
some constants $\bar a,$
$\bar b,$ $\bar\lambda$ and $\bar\mu$ such that
\begin{equation}
2\bar a\bar\mu+\bar b\bar\lambda>0\quad\hbox{and}\quad\bar a\bar\mu>0,
\end{equation}
and such that $a-\bar a,$ $b-\bar b,$ $\mu-\bar\mu$ and $\lambda-\bar\lambda$
are in $\cC([0,T];\dot B^{n/p}_{p,1}).$
Finally, suppose that
\begin{equation}\label{eq:smallcoeff}
\lim_{m\rightarrow+\infty} \|(\Id-\dot S_m)(a-\bar a,b-\bar b,\lambda-\bar\lambda,\mu-\bar\mu)\|_{L_T^\infty(\dot B^{n/p}_{p,1})}=0.
\end{equation}
Then for any data $u_0\in \dot B^s_{p,1}$ and
$f\in L^1(0,T;\dot B^s_{p,1})$ with  $s$ satisfying \eqref{eq:s}, System 
 \eqref{eq:lame-var} admits a unique solution 
$u\in \cC([0,T];\dot B^s_{p,1})\cap L^1(0,T;\dot B^{s+2}_{2,1}).$
Besides,  the estimates of Proposition \ref{p:lamerough} are fulfilled 
for all large enough  $m\in\Z.$ 
\end{prop}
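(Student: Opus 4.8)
The plan is to produce $u$ as a limit of solutions of the system with \emph{regularized} coefficients: the uniform bounds will come from Proposition~\ref{p:lamerough}, the solvability of each regularized system from the method of continuity based on Proposition~\ref{p:lamewhole}, and uniqueness directly from the estimate of Proposition~\ref{p:lamerough}. First I would fix the frequency threshold $m$. Since $a-\bar a,\,b-\bar b,\,\mu-\bar\mu,\,\lambda-\bar\lambda\in\cC([0,T];\dot B^{n/p}_{p,1})$, the functions $a\mu-\bar a\bar\mu$ and $2a\mu+b\lambda-(2\bar a\bar\mu+\bar b\bar\lambda)$ also lie in $\cC([0,T];\dot B^{n/p}_{p,1})\hookrightarrow\cC([0,T];L^\infty)$ by the algebra property, whence $\dot S_m(a\mu)\to a\mu$ and $\dot S_m(2a\mu+b\lambda)\to2a\mu+b\lambda$ uniformly on $[0,T]\times\R^n$; combined with \eqref{eq:ellipticity} this yields \eqref{eq:positivebis} for all large $m$. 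Likewise $\mu\nabla a=\mu\nabla(a-\bar a)$ and its three companions belong to $\cC([0,T];\dot B^{n/p-1}_{p,1})$, so by \eqref{eq:smallcoeff} (or simply by compactness of $[0,T]$) the norm $\|(\Id-\dot S_m)(\mu\nabla a,a\nabla\mu,\lambda\nabla b,b\nabla\lambda)\|_{L^\infty_T(\dot B^{n/p-1}_{p,1})}$ tends to $0$ as $m\to\infty$, so \eqref{eq:smallabis} holds, strictly, for $m$ large; I freeze such an $m$. Then I would regularize by setting $a^k:=\bar a+\dot S_k(a-\bar a)$ and likewise $b^k,\mu^k,\lambda^k$: these are smooth in $x$, converge to the original coefficients in $\cC([0,T];\dot B^{n/p}_{p,1})$, and, for $k$ large, satisfy \eqref{eq:ellipticity}, \eqref{eq:positivebis} and \eqref{eq:smallabis} with $\alpha$ replaced by $\alpha/2$ (for \eqref{eq:smallabis} one uses that $\mu^k\nabla a^k\to\mu\nabla a$ in $\cC([0,T];\dot B^{n/p-1}_{p,1})$, and that $\Id-\dot S_m$ is bounded).

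Next I would solve, for each such $k$, the system $L^ku^k=f$, $u^k|_{t=0}=u_0$, where $L^k$ is the Lam\'e operator with coefficients $a^k,b^k,\mu^k,\lambda^k$, by the method of continuity. I join $L^k$ to the constant-coefficient Lam\'e operator through a one-parameter family $(L_\theta)_{\theta\in[0,1]}$ of operators $z\mapsto\partial_tz-2a_\theta\div(\mu_\theta D(z))-b_\theta\nabla(\lambda_\theta\div z)$ with smooth bounded coefficients, chosen so that $\inf(a_\theta\mu_\theta)$ and $\inf(2a_\theta\mu_\theta+b_\theta\lambda_\theta)$ stay bounded away from $0$ uniformly in $\theta$ --- for instance by interpolating geometrically the two positive quantities $a\mu$ and $2a\mu+b\lambda$ between their endpoint values and adjusting $\lambda_\theta$ accordingly. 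Along the whole family the a priori estimate of Proposition~\ref{p:lamesmooth} holds with a constant independent of $\theta$ (the coefficients being smooth, the Besov norms entering that estimate are finite and bounded uniformly in $\theta$), and at $\theta=0$ the system is solvable in $\cC([0,T];\dot B^s_{p,1})\cap L^1(0,T;\dot B^{s+2}_{p,1})$ by Proposition~\ref{p:lamewhole}; the standard continuation argument then yields a solution $u^k$ in that same space. (A Galerkin scheme combined with the same a priori estimate would work just as well.)

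Finally I would let $k\to\infty$. Applying Proposition~\ref{p:lamerough} to $u^k$ with the frozen $m$ and the constant $\alpha/2$, and noting that the relevant norms of the regularized coefficients --- in particular $\|\dot S_m(\mu^k\nabla a^k,\dots)\|_{\dot B^{n/p}_{p,1}}\lesssim2^m\|(\mu^k\nabla a^k,\dots)\|_{\dot B^{n/p-1}_{p,1}}$ --- are bounded uniformly in $k$ and $t$, I obtain a bound on $\|u^k\|_{L^\infty_T(\dot B^s_{p,1})}+\|u^k\|_{L^1_T(\dot B^{s+2}_{p,1})}$ independent of $k$; reading off the equation, $\|\partial_tu^k\|_{L^1_T(\dot B^s_{p,1})}$ is bounded too. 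A standard compactness argument then extracts a subsequence converging, weak-$*$ in the relevant norms and in $\cD'((0,T)\times\R^n)$, to some $u\in\cC([0,T];\dot B^s_{p,1})\cap L^1(0,T;\dot B^{s+2}_{p,1})$; since $a^k\to a$, $b^k\to b$, $\mu^k\to\mu$, $\lambda^k\to\lambda$ strongly in $L^\infty((0,T)\times\R^n)$, each product in $L^ku^k$ passes to the limit in $\cD'$, so $u$ solves \eqref{eq:lame-var} with $u(0)=u_0$, and $u\in\cC([0,T];\dot B^s_{p,1})$ follows from $\partial_tu\in L^1_T(\dot B^s_{p,1})$. Uniqueness is then immediate: the difference of two solutions in $\cC([0,T];\dot B^s_{p,1})\cap L^1(0,T;\dot B^{s+2}_{p,1})$ solves \eqref{eq:lame-var} with zero data and vanishing right-hand side, hence vanishes by Proposition~\ref{p:lamerough} (legitimate since $m$ may be taken arbitrarily large); and the same proposition applied directly to $u$ gives the announced estimates for every sufficiently large $m$.

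I expect the genuinely delicate point to be the solvability of the regularized systems --- that is, verifying that the method of continuity (or the Galerkin scheme) really does apply in this functional framework with a possibly negative regularity index $s$, the operators along the continuation path genuinely falling under the scope of Proposition~\ref{p:lamesmooth} with $\theta$-uniform constants. Once that is in hand, the $k$-uniform estimate and the limiting procedure are routine.
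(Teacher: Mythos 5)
Your strategy is sound but genuinely different from the paper's. The paper runs the continuity method \emph{directly on the rough system}: it interpolates linearly, $a_\theta:=(1-\theta)\bar a+\theta a$ etc., checks that a single $m$ makes \eqref{eq:positivebis}--\eqref{eq:smallabis} hold for every $\theta$ (so Proposition \ref{p:lamerough} gives a $\theta$-uniform a priori bound), and obtains openness of the solvable set by solving $\d_tu-\cP_{\theta_0}u=f+(\cP_\theta-\cP_{\theta_0})v$ and showing $v\mapsto u$ is a contraction on $F^s_p(T)$ for $|\theta-\theta_0|\le\ep$ with $\ep$ independent of $\theta_0$ (this uses the product law $\dot B^{n/p}_{p,1}\times\dot B^{s+1}_{p,1}\to\dot B^{s+1}_{p,1}$ under \eqref{eq:s}). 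You instead regularize the coefficients by $\dot S_k$, solve each smooth system by continuity, and pass to the limit in $k$. What the paper's route buys is a single limiting procedure and no compactness step; what yours buys is that the continuity path only ever involves smooth coefficients, so Proposition \ref{p:lamesmooth} suffices along the path. Your preliminary reductions (fixing $m$ via the algebra property, uniformity in $t$ by compactness of $[0,T]$, the $k$-uniform bound from Proposition \ref{p:lamerough} with the frozen $m$) are all correct and match the paper's.

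Two steps of yours need more than you give them. First, the ``standard continuation argument'' is precisely the nontrivial openness step; it is not implied by the a priori estimate alone, and you should spell out the perturbation/contraction argument above (it works verbatim for your smooth coefficients since $a^k-\bar a=\dot S_k(a-\bar a)\in\dot B^{n/p}_{p,1}$). Second, your final limit passage is the weakest link: $\dot B^s_{p,1}$ is not reflexive, so weak-$*$ extraction in $L^\infty_T(\dot B^s_{p,1})\cap L^1_T(\dot B^{s+2}_{p,1})$ requires care (e.g.\ passing through $\dot B^s_{p,\infty}$ and a Fatou property), and the product $a^k\div(\mu^kD(u^k))$ does not pass to the limit merely because one factor converges in $L^\infty$ and the other in $\cD'$ --- you need the weak convergence of $\div(\mu^kD(u^k))$ to take place in $L^1_T(\dot B^s_{p,1})$ where multiplication by $a^k$ is continuous. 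A cleaner finish, consistent with your own toolbox, is to apply Proposition \ref{p:lamerough} to $u^k-u^l$, which solves the $k$-system with source $(L^l-L^k)u^l\to0$ in $L^1_T(\dot B^s_{p,1})$, thereby showing $(u^k)$ is Cauchy in $F^s_p(T)$ and avoiding compactness altogether. With those two repairs your proof is complete; note also that both you and the paper implicitly use pointwise positivity of the individual coefficients (not just of the products $a\mu$ and $2a\mu+b\lambda$) to keep ellipticity along the interpolation path, a point you at least flag explicitly.
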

\begin{proof}
The proof is based on the continuity method as explained in e.g. \cite{Krylov}
(and used in \cite{D4} in a similar context as ours). 
For $\theta\in[0,1],$ we introduce the following second order operator 
$\cP_\theta$ acting on vector-fields $u$ as follows: 
$$\cP_\theta u:=-2a_\theta\div(\mu_\theta D(u))-b_\theta\nabla(\lambda_\theta\div u),
$$
where
$a_\theta:=(1-\theta)\bar a + \theta a,$ $b_\theta:=(1-\theta)\bar b +\theta b,$ and so on.
We claim that one may find some $m\in\Z$ \emph{independent of $\theta$} such 
that for all $\theta\in[0,1],$ the conditions \eqref{eq:positivebis} and \eqref{eq:smallabis} 
are satisfied by $a_\theta,$ $b_\theta,$ $\mu_\theta$ and $\lambda_\theta.$ 
Indeed, we notice that 
$$
a_\theta-\bar a=\theta(a-\bar a).
$$
Hence, for all $\theta\in[0,1],$
$$
\|(\Id-\dot S_m)(a_\theta-\bar a)\|_{L_T^\infty(\dot B^{n/p}_{p,1})}\leq
\|(\Id-\dot S_m)(a-\bar a)\|_{L_T^\infty(\dot B^{n/p}_{p,1})},
$$
and similar properties hold for $b_\theta,$ $\lambda_\theta$ and $\mu_\theta.$
In particular, owing to the continuous embedding  of $\dot B^{n/p}_{p,1}$ in the
set of continuous bounded functions, and to \eqref{eq:smallcoeff},  we deduce that 
there exists some $m\in\Z$ so that 
the ellipticity condition 
\eqref{eq:positivebis}  is satisfied by operator $\cP_\theta$ for all $\theta\in[0,1].$

Likewise, we have for instance
$$
\mu_\theta\nabla a_\theta=\theta(1-\theta)\bar\mu\nabla a+\theta^2\mu\nabla a
$$
and similar relations for the other coefficients. Hence
one may find some large enough $m$ so that 
 \eqref{eq:smallabis} is satisfied for all $\theta\in[0,1].$ 
 In addition, the above relation shows that 
 $$
\| \dot S_m(\mu_\theta\nabla a_\theta)\|_{\dot B^{n/p}_{p,1}}
\leq \bar\mu\|\dot S_m\nabla a\|_{\dot B^{n/p}_{p,1}}
+\|\dot S_m(\mu\nabla a)\|_{\dot B^{n/p}_{p,1}}.
$$
Hence all the terms appearing in the exponential term of 
 the estimate in Proposition \ref{p:lamerough} may 
 be bounded by a constant \emph{depending only on $m$ and on 
 the coefficients $a,$ $b,$ $\lambda$ and $\mu.$}
 As a conclusion,  one may thus find some constant $C$ \emph{independent of $\theta$}
 such that any solution $w$ of 
 $$
 \d_tw-\cP_\theta w=g,\qquad w|_{t=0}=w_0
 $$
 satisfies 
 \begin{equation}\label{eq:estrough}
 \|w\|_{L_T^\infty(\dot B^s_{p,1})}+\alpha\|w\|_{L_T^1(\dot B^{s+2}_{p,1})}
 \leq C\bigl(\|w_0\|_{\dot B^s_{p,1}}+\|g\|_{L_T^1(\dot B^s_{p,1})}\bigr).
 \end{equation}
 After this preliminary work, one may start with the proof of existence (uniqueness follows from 
 the estimates of Proposition \ref{p:lamerough}). 
Let $\cE$ be  the set
 of those  $\theta$ in $[0,1]$ such that for every data $u_0$ and $f$ (as
in the statement of the theorem), System
  \begin{equation}\label{eq:system}\d_tu-\cP_\theta u=f,\qquad u|_{t=0}=u_0\end{equation}
   has a solution 
$u$ in the set $F^s_p(T):=\cC([0,T];\dot B^s_{p,1})\cap L^1(0,T;\dot B^{s+2}_{p,1}).$   
  
Note that according to Proposition \ref{p:lamewhole}, 
the set $\cE$ contains $0$ hence is nonempty. 
So it suffices to find a \emph{fixed}  $\ep>0$ such that for all $\theta_0\in\cE,$ we have 
\begin{equation}\label{eq:ep}
[\theta_0-\ep,\theta_0+\ep]\cap [0,1]\subset\cE.
\end{equation}
So let us fix some $\theta_0\in\cE,$  $u_0\in \dot B^s_{p,1},$ 
$f\in L^1(0,T;\dot B^s_{p,1})$ and $v\in F_p^s(T)$ 
and consider the solution $u$  to the system
$$
\d_tu-\cP_{\theta_0}u=f+(\cP_{\theta}-\cP_{\theta_0})v
$$
with $\theta\in[0,1]$ such that $|\theta-\theta_0|\leq\ep.$
Given that  $\theta_0$ is in $\cE,$  the existence of $u$ in $F_p^s(T)$
is granted if $(\cP_{\theta}-\cP_{\theta_0})v\in L^1(0,T;\dot B^s_{p,1}).$ 
So let us first check this: we have
$$\displaylines{\quad
(\cP_{\theta}-\cP_{\theta_0})v=(\theta-\theta_0)\Bigl(
2a_{\theta_0}\div\bigl((\bar\mu-\mu)D(v)\bigr)
+2(\bar a-a)\div(\mu_\theta D(v))
\hfill\cr\hfill+b_{\theta_0}\nabla\bigl((\bar\lambda-\lambda)\div v\bigr)
+(\bar b-b)\nabla\bigl(\lambda_\theta\div v\bigr)\Bigr).}
$$
Under Condition \eqref{eq:s}, one may thus conclude thanks to product estimates in Besov spaces 
(see Lemma \ref{l:prod}) that
$(\cP_{\theta}-\cP_{\theta_0})v\in L^1(0,T;\dot B^s_{p,1}).$ Furthermore 
$$\displaylines{\quad
\|(\cP_{\theta}-\cP_{\theta_0})v\|_{\dot B^s_{p,1}}\leq 
C\ep\Bigl((\bar a+\|a_{\theta_0}-\bar a\|_{\dot B^{n/p}_{p,1}})\|\mu-\bar\mu\|_{\dot B^{n/p}_{p,1}}
+ (\bar \mu+\|\mu_{\theta}-\bar \mu\|_{\dot B^{n/p}_{p,1}})\|a-\bar a\|_{\dot B^{n/p}_{p,1}}
\hfill\cr\hfill
+ (\bar b+\|b_{\theta_0}-\bar b\|_{\dot B^{n/p}_{p,1}})\|\lambda-\bar\lambda\|_{\dot B^{n/p}_{p,1}}
+ (\bar \lambda+\|\lambda_{\theta}-\bar \lambda\|_{\dot B^{n/p}_{p,1}})\|a-\bar a\|_{\dot B^{n/p}_{p,1}}
\Bigr)\|Dv\|_{\dot B^{s+1}_{p,1}}.}
$$
The coefficients may be bounded in terms of the initial coefficients $a,$ $b,$ $\lambda$ and $\mu.$
Hence, applying \eqref{eq:estrough} we get for some constant independent of $\theta_0$ and of $\theta,$
$$
\|u\|_{L_T^\infty(\dot B^s_{p,1})}+\alpha\|u\|_{L_T^1(\dot B^{s+2}_{p,1})} \leq 
C\bigl(\ep\|v\|_{L_T^1(\dot B^{s+2}_{p,1})}+ \|w_0\|_{\dot B^s_{p,1}}+\|f\|_{L_T^1(\dot B^s_{p,1})}\bigr).
$$
Taking $\ep$ small enough, it becomes  clear that the linear map 
$\Psi_\theta:v\mapsto u$ is contractive on the Banach space $F_p^s(T).$
Hence it has a (unique) fixed point $u\in F_p^s(T).$
In other words, $u$ satisfies \eqref{eq:system}. 

Given that $\cE$ is nonempty and that $\ep$ is independent of $\theta_0,$ one may 
now conclude that  $1$ is in $\cE.$ Therefore, there exists a solution  $u\in F_{p}^s(T)$
 to \eqref{eq:lame-var}.
\end{proof}
\begin{rem}\label{r:lame}
Under the assumptions of the above proposition, 
the constructed solution $u$ satisfies $\d_tu\in L^1(0,T;\dot B^s_{p,1}).$
Indeed, it suffices to notice that
$$
\d_tu=f+(\bar a+(a-\bar a))\div(\bar\mu+(\mu-\bar\mu) D(u))
+(\bar b+(b-\bar b))\nabla(\bar\lambda+(\lambda-\bar\lambda)\div u),
$$
and to use Lemma \ref{l:prod} together with the facts
that $\nabla u$ is in $L^1(0,T;\dot B^{s+1}_{p,1}).$
Moreover  we have
$$\displaylines{\quad
\|\d_tu\|_{L^1_T(\dot B^s_{p,1})}\leq C \bigl(\|u_0\|_{\dot B^s_{p,1}}+\|f\|_{L_t^1(\dot B^s_{p,1})}\bigr)
\exp\biggl(\frac C{\alpha}\int_0^t\|\dot S_m(\mu\nabla a,a\nabla \mu,\lambda\nabla b,b\nabla\lambda)\|^2_{\dot B^{n/p}_{p,1}}\,d\tau\biggr)\quad}
$$
where $C$ may depend also on the norm of $a-\bar a,$ $b-\bar b,$ $\lambda-\bar\lambda$
and $\mu-\bar \mu$ in $L^\infty(0,T;\dot B^{n/p}_{p,1}).$
\end{rem}


\subsection{Proof of Theorem \ref{th:main1}}

As we want to consider (possibly) large velocities, we introduce, as in the almost 
homogeneous case the free solution to the Lam\'e system  corresponding to $\rho\equiv1,$
 that is the vector-field $u_L$
in $E_p(T),$ given by Proposition \ref{p:lamewhole}, satisfying\footnote{See
\eqref{eq:L} for the definition of operator $L_1.$}
 $$
 L_1 u_L=0,\qquad  u|_{t=0}=u_0.
 $$
 We claim that the Banach fixed point theorem applies to the map $\Phi$ defined in 
 \eqref{eq:Phi} in some closed ball $\bar B_{E_p(T)}(u_L,R)$ with suitably small $T$ and $R.$ 
 Denoting $\tilde u:=u-u_L,$ we see that $\tilde u$ has to satisfy 
\begin{equation}\label{eq:tildeu}
 \left\{\begin{array}{l}
 L_{\rho_0}\tilde u=\rho_0^{-1}\div\bigl(I_1(v,v)+I_2(v,v)
 +I_3(v,v)+I_4(v)\bigr)+(L_1-L_{\rho_0})u_L,\\[1ex]
 \tilde u|_{t=0}=0.\end{array}\right.
 \end{equation}
If the right-hand side is in $L^1(0,T;\dot B^{n/p-1}_{p,1})$
and if there exists some $m\in\Z$ so that 
\eqref{eq:positivebis} and \eqref{eq:smallabis} are fulfilled then 
 Proposition \ref{p:lameexistence} and  Remark \ref{r:lame} ensure 
 the existence of $\tilde u$ in $E_p(T).$
 Now, the existence 
 of $m$  so that 
 $$
\displaylines{\min\biggl(\inf_x\dot S_m\Bigl(2\frac{\mu(\rho_0)}{\rho_0}+\frac{\lambda(\rho_0)}{\rho_0}\Bigr),
\inf_x\dot S_m\Bigl(\frac{\mu(\rho_0)}{\rho_0}\Bigr)\biggr)>\frac\alpha2\cr
\hbox{and}\quad
\Bigl\|(\Id-\dot S_m)\Bigl(\frac{\mu(\rho_0)}{\rho_0^2}\nabla\rho_0,\frac{\mu'(\rho_0)}{\rho_0}\nabla\rho_0,\frac{\lambda(\rho_0)}{\rho_0^2}\nabla\rho_0,\frac{\lambda'(\rho_0)}{\rho_0}\nabla\rho_0\Bigr)\Bigr\|_{\dot B^{n/p-1}_{p,1}}\leq\eta\alpha.}
$$
 is ensured by the fact that all the coefficients (minus some constant) 
 belong to the space $\dot B^{n/p}_{p,1}$  which is defined in terms of  a convergent series
 and embeds continuously in the set of bounded continuous functions.
 The study of the right-hand side of \eqref{eq:tildeu} will
 be carried out below. 

\subsubsection*{First step: Stability of $\bar B_{E_p(T)}(u_L,R)$  for small enough $R$ and $T$}
 Proposition \ref{p:lamerough} 
and the definition of the multiplier space $\cM(\dot B^{n/p-1}_{p,1})$ 
 ensure that
 \begin{multline}\label{eq:ut1}
 \|\tilde u\|_{E_p(T)}\leq Ce^{C_{\rho_0,m}T}
\Bigl(\|(L_1-L_{\rho_0})u_L\|_{L_T^1(\dot B^{n/p-1}_{p,1})}\\
+\|\rho_0^{-1}\|_{\cM(\dot B^{n/p-1}_{p,1})}\bigl(\|I_1(v,v)\|_{L_T^1(\dot B^{n/p}_{p,1})}+
 \|I_2(v,v)\|_{L_T^1(\dot B^{n/p}_{p,1})}+\|I_3(v,v)\|_{L_T^1(\dot B^{n/p}_{p,1})}
 \!+\|I_4(v)\|_{L_T^1(\dot B^{n/p}_{p,1})}\bigr)\!\Bigr)
 \end{multline}
 for some constant $C_{\rho_0,m}$ depending only on $\rho_0$ and on $m.$
\medbreak
In what follows, we assume that $T$ and $R$ have been chosen so that \eqref{eq:smallv} is satisfied
by $v.$
Using the decomposition
$$\displaylines{\quad
(L_1-L_{\rho_0})u_L=(\rho_0^{-1}-1)\div\bigl(2\mu(\rho_0)D(u_L)
+\lambda(\rho_0)\div u_L\,\Id\bigr)\hfill\cr\hfill
+\div\bigl(2(\mu(\rho_0)-\mu(1))D(u)+(\lambda(\rho_0)-\lambda(1))\div u\,\Id\bigr),\quad}
$$
and composition inequalities \eqref{eq:compo1} and \eqref{eq:compo2}, we see that 
$(L_1-L_{\rho_0})u_L\in L^1(0,T;\dot B^{n/p-1}_{p,1})$ and 
\begin{equation}\label{eq:L1}
\|(L_1-L_{\rho_0})u_L\|_{L_T^1(\dot B^{n/p-1}_{p,1})}\lesssim 
\|a_0\|_{\dot B^{n/p}_{p,1}}(1+\|a_0\|_{\dot B^{n/p}_{p,1}})
\|Du_L\|_{L_T^1(\dot B^{n/p}_{p,1})}.
\end{equation}
Likewise, flow and composition estimates ensure  that
\begin{equation}\label{eq:II2}
\|I_i(v,w)\|_{L_T^1(\dot B^{n/p-1}_{p,1})}\lesssim
(1+\|a_0\|_{\dot B^{n/p}_{p,1}})\|Dv\|_{L_T^1(\dot B^{n/p}_{p,1})}
\|Dw\|_{L_T^1(\dot B^{n/p}_{p,1})}\quad\hbox{for $i=1,2,3$}
\end{equation}
and that
\begin{equation}\label{eq:II5}
\|I_4(v)\|_{L_T^1(\dot B^{n/p}_{p,1})}\lesssim T(1+\|a_0\|_{\dot B^{n/p}_{p,1}})(1+\|Dv\|_{L_T^1(\dot B^{n/p}_{p,1})}).
\end{equation}
So plugging the above inequalities in \eqref{eq:ut1} and keeping in mind
that  $v$  satisfies \eqref{eq:smallv}, we get after decomposing
$v$ into $\tilde v+u_L$: 
$$
\displaylines{\|\tilde u\|_{E_p(T)}\leq Ce^{C_{\rho_0,m}T}
(1+\|a_0\|_{\dot B^{n/p}_{p,1}})^2\Bigl((T+\|a_0\|_{\dot B^{n/p}_{p,1}}\|Du_L\|_{L_T^1(\dot B^{n/p}_{p,1})})
\hfill\cr\hfill+\|Du_L\|_{L_T^1(\dot B^{n/p}_{p,1})}^2
+\bigl(\|Du_L\|_{L_T^1(\dot B^{n/p}_{p,1})}+\|D\tilde v\|_{L_T^1(\dot B^{n/p}_{p,1})}\bigr)\|D\tilde v\|_{L_T^1(\dot B^{n/p}_{p,1})}\Bigr).}
$$
So, because $\tilde v\in B_{E_p(T)}(u_L,R),$
$$
\displaylines{
\|\tilde u\|_{E_p(T)}\leq Ce^{C_{\rho_0,m}T}(1+\|a_0\|_{\dot B^{n/p}_{p,1}})^2
\Bigl((T+\|a_0\|_{\dot B^{n/p}_{p,1}}\|Du_L\|_{L_T^1(\dot B^{n/p}_{p,1})})\hfill\cr\hfill
+(R+\|Du_L\|_{L_T^1(\dot B^{n/p}_{p,1})})\|Du_L\|_{L_T^1(\dot B^{n/p}_{p,1})}+R^2\Bigr).}
$$
Therefore, if we first choose $R$ so that for a small enough constant $\eta,$
\begin{equation}\label{eq:eta}(1+\|a_0\|_{\dot B^{n/p}_{p,1}})^2R\leq \eta
\end{equation}
 and then take $T$ so that 
\begin{equation}\label{eq:smalltime}
\begin{array}{c}
C_{\rho_0,m}T\leq\log2,\quad T\leq R^2,\quad
\|a_0\|_{\dot B^{n/p}_{p,1}}\|Du_L\|_{L_T^1(\dot B^{n/p}_{p,1})}\leq R^2,\\[2ex]
\|Du_L\|_{L_T^1(\dot B^{n/p}_{p,1})}\leq R,
\end{array}
\end{equation}
then we may conclude that $\Phi$ maps $\bar B_{E_p(T)}(u_L,R)$ into itself.


\subsubsection*{Second  step: contraction estimates}
Let us now establish that, under Condition \eqref{eq:smalltime}, the map 
$\Phi$ is contractive. 
We consider two vector-fields $v^1$ and $v^2$ in $\bar B_{E_p(T)}(u_L,R),$ and 
set $u^1:=\Phi(v^1)$ and $u^2:=\Phi(v^2).$ Let $\du:=u^2-u^1$ and $\dv:=v^2-v^1.$
In order to prove that $\Phi$ is contractive, it is mainly a matter of applying Proposition \ref{p:lamerough} to 
$$\displaylines{
L_{\rho_0}\du=\rho_0^{-1}\div\Bigl((I_1(v^2,v^2)-I_1(v^1,v^1))\hfill\cr\hfill+(I_2(v^2,v^2)-I_2(v^1,v^1)) +(I_3(v^2,v^2)-I_3(v^1,v^1)) +(I_4(v^2)-I_4(v^1))
\Bigr)\cdotp}$$
So we have, given that $C_{\rho_0,m}T\leq\log2,$
 \begin{multline}\label{eq:ut2}
 \|\du\|_{E_p(T)}\leq C(1+\|a_0\|_{\dot B^{n/p}_{p,1}})
 \Bigl(\|I_1(v^2,v^2)-I_1(v^1,v^1)\|_{L_T^1(\dot B^{n/p}_{p,1})}\\+
 \|I_2(v^2,v^2)-I_2(v^1,v^1)\|_{L_T^1(\dot B^{n/p}_{p,1})}+\|I_3(v^2,v^2)-I_3(v^1,v^1)\|_{L_T^1(\dot B^{n/p}_{p,1})}
 +\|I_4(v^2)-I_4(v^1)\|_{L_T^1(\dot B^{n/p}_{p,1})}\Bigr)\cdotp
 \end{multline}
In order to deal with the first  term of the right-hand side, we use the decomposition
$$
\begin{array}{lll}I_1(v^2,v^2)-I_1(v^1,v^1)&=&\lambda(J_{v^2}^{-1}\rho_0)\bigl({}^T\!A_{v^2}:\nabla v^2\bigr)\bigl(\adj(DX_{v^2})-\adj(DX_{v^1})\bigr)\\
&+&\bigl(\adj(DX_{v^1})-\Id\bigr)\bigl(\lambda(J_{v^2}^{-1}\rho_0)-\lambda(J_{v^1}^{-1}\rho_0)\bigr)\bigl({}^T\!A_{v^2}:\nabla v^2\bigr)\\
&+&\bigl(\adj(DX_{v^1})-\Id\bigr)\lambda(J_{v^1}^{-1}\rho_0)\bigl(({}^T\!A_{v^2}-{}^T\!A_{v^1}):\nabla v^1+{}^T\!A_{v^2}:\nabla\dv\bigr)\\
&+&\hbox{terms pertaining to }\ \mu.\end{array}
$$
Taking advantage of product laws in Besov spaces, of composition estimates \eqref{eq:compo1} and \eqref{eq:compo2},
and of the flow estimates in the appendix, we deduce that for some constant $C_{\rho_0}$
depending only on~$\rho_0$:
$$
\|I_1(v^2,v^2)-I_1(v^1,v^1)\|_{L_T^1(\dot B^{n/p}_{p,1})}\leq C_{\rho_0}\|(Dv^1,Dv^2)\|_{L_T^1(\dot B^{n/p}_{p,1})}
\|D\dv\|_{L_T^1(\dot B^{n/p}_{p,1})}.
$$
Similar estimates may be proved for the next two terms 
 of the right-hand side of \eqref{eq:ut2}. Concerning the last one, we use the decomposition
 $$
 I_4(v^2)-I_4(v^1)=\bigl(\adj(DX_{v^1})-\adj(DX_{v^2})\bigr)P(J_{v^2}^{-1}\rho_0)-\adj(DX_{v^1})
 \bigl(P(J_{v^2}^{-1}\rho_0)-P(J_{v^1}^{-1}\rho_0)\bigr).
 $$
 Hence
 $$
 \|I_4(v^2)-I_4(v^1)\|_{L_T^1(\dot B^{n/p}_{p,1})}\leq C(1+\|a_0\|_{\dot B^{n/p}_{p,1}})T\|D\dv\|_{L_T^1(\dot B^{n/p}_{p,1})}.
 $$
 We end up with 
$$
\displaylines{
 \|\du\|_{E_p(T)}\leq C(1+\|a_0\|_{\dot B^{n/p}_{p,1}})^2\bigl(T+\|(Dv^1,Dv^2)\|_{L_T^1(\dot B^{n/p}_{p,1})}
 \bigr)\|D\dv\|_{L_T^1(\dot B^{n/p}_{p,1})}
}$$
Given that $v^1$ and $v^2$ are in $\bar B_{E_p(T)}(u_L,R),$
our hypotheses over $T$ and $R$ (with  smaller $\eta$ in \eqref{eq:eta} if need be) thus ensure that, say, 
$$
 \|\du\|_{E_p(T)}\leq \frac12 \|\dv\|_{E_p(T)}.
 $$
One can thus conclude that $\Phi$ admits a unique fixed point in $\bar B_{E_p(T)}(u_L,R).$


\subsubsection*{Third step: Regularity of the density}

Granted with the above velocity field $u$ in $E_p(T),$ we set  $\rho:=J_u^{-1}\rho_0.$
By construction, the couple $(\rho,u)$ satisfies
\eqref{eq:lagrangian}.
Let us now prove that $a:=\rho-1$ is in  $\cC([0,T];\dot B^{n/p}_{p,1}).$ 
We have
$$
a=(J_u^{-1}-1)a_0+a_0.
$$
Given \eqref{eq:J} and using the fact that $Du\in L^1(0,T;\dot B^{n/p}_{p,1}),$  it is clear that $J_u^{-1}-1$
 belongs to $\cC([0,T];\dot B^{n/p}_{p,1}).$
Hence $a$  belongs to $\cC([0,T];\dot B^{n/p}_{p,1}),$ too.
Because $\dot B^{n/p}_{p,1}$ is continuously embedded in $L^\infty,$ Condition \eqref{eq:positive}
is fulfilled on $[0,T]$ (taking $T$ smaller if needed).


\subsubsection*{Last step: Uniqueness and continuity of the flow map}

We now consider two couples $(\rho_0^1,u_0^1)$ and $(\rho_0^2,u_0^2)$ of data fulfilling the assumptions
of Theorem \ref{th:main1} and we denote by $(\rho^1,u^1)$ and $(\rho^2,u^2)$ two solutions in $E_p(T)$
corresponding to those data. Setting $\du:=u^2-u^1,$ we see that
$$\displaylines{
L_{\rho_0^1}(\du)=(L_{\rho_0^1}-L_{\rho_0^2})(u^2)
+(\rho_0^1)^{-1}\div\Bigl(\sum_{j=1}^3\bigl((I_j^2(u^2,u^2)-I_j^2(u^1,u^1)\bigr)+(I_4^2(u^2)-I_4^2(u^1))\Bigr)\hfill\cr\hfill
+(\rho_0^1)^{-1}\div\Bigl(\sum_{j=1}^3((I_j^2-I_j^1)(u^1,u^1)+(I_4^2-I_4^1)(u^1)\Bigr),}$$
where $I_1^i,$ $I_2^i,$ $I_3^i$ and $I_4^i$ correspond to the quantities that have been defined just above \eqref{eq:Phi}, 
with density $\rho_0^i.$ Note that those terms may be bounded 
exactly as in the second step. So the only definitely new terms are  $(L_{\rho_0^1}-L_{\rho_0^2})(u^2)$ 
and the last line. As regards $(L_{\rho_0^1}-L_{\rho_0^2})(u^2),$  it may be decomposed into
$$\begin{array}{lll}
(L_{\rho_0^1}-L_{\rho_0^2})(u^2)&=&
\bigl((\rho_0^1)^{-1}-(\rho_0^2)^{-1}\bigr)\div \bigl(2\mu(\rho_0^1)D(u^2)+\lambda(\rho_0^1)\div u^2\Id\bigr)\\
&-&(\rho_0^2)^{-1}\div\bigl(2(\mu(\rho_0^2)-\mu(\rho_0^1))D(u^2)+(\lambda(\rho_0^2)-\lambda(\rho_0^1))\div u^2\Id\bigr).
\end{array}
$$
Hence, combining composition, flow and product estimates, we get for $t\leq T,$
$$
\|(L_{\rho_0^1}-L_{\rho_0^2})(u^2)\|_{L_t^1(\dot B^{n/p-1}_{p,1})}
\leq C_{\rho_0^1,\rho_0^2}\|\dr_0\|_{\dot B^{n/p}_{p,1}} \|Du^2\|_{L_t^1(\dot B^{n/p}_{p,1})}.
$$
It is not difficult to show that the other ``new" terms satisfy analogous estimates. 
Hence, applying Proposition \ref{p:lamerough} to the system that is satisfied by $\du,$ we discover that for $t\leq T,$
$$
\displaylines{
 \|\du\|_{E_p(t)}\leq C_{\rho_0^1,\rho_0^2}\bigl((t+\|(Du^1,Du^2)\|_{L_t^1(\dot B^{n/p}_{p,1})})
 \|D\du\|_{L_t^1(\dot B^{n/p}_{p,1})}\hfill\cr\hfill
 +\|\du_0\|_{\dot B^{n/p}_{p,1}}+\|\dr_0\|_{\dot B^{n/p}_{p,1}} (t+\|(Du^1,Du^2)\|_{L_t^1(\dot B^{n/p}_{p,1})})\bigr).}$$
Let us emphasize  that the constant $C_{\rho_0^1,\rho_0^2}$ depends only on $\rho_0^2$
through its norm, for  the integer $m$ used in Proposition \ref{p:lamerough}
corresponds to $\rho_0^1$ only. Hence if  $\dr_0$ is small enough then the above  inequality recasts in 
 $$
 \displaylines{
 \|\du\|_{E_p(t)}\leq C_{\rho_0^1}\bigl((t+\|Du^1\|_{L_t^1(\dot B^{n/p}_{p,1})}
 +\|\du\|_{E_p(t)})\|\du\|_{E_p(t)}
 \hfill\cr\hfill+\|\du_0\|_{\dot B^{n/p}_{p,1}}+\|\dr_0\|_{\dot B^{n/p}_{p,1}}(t+ \|Du^1\|_{L_t^1(\dot B^{n/p}_{p,1})})\bigr).}
$$
 An obvious bootstrap argument thus shows that if $t,$  $\du_0$ and $\dr_0$ are small enough then 
 $$
  \|\du\|_{E_p(t)}\leq 2C_{\rho_0}\bigl(\|\du_0\|_{\dot B^{n/p}_{p,1}}+\|\dr_0\|_{\dot B^{n/p}_{p,1}}\bigr).
  $$
    As regards the density, we have
$$
\da=J_{u^1}^{-1} \da_0+(J_{u^2}^{-1}-J_{u^1}^{-1})a_0^2.
 $$
 Hence for all $t\in[0,T],$ 
 $$
 \|\da(t)\|_{\dot B^{n/p}_{p,1}}
 \leq C(1+\|Du^1\|_{L_t^1(\dot B^{n/p}_{p,1})})\|\da_0\|_{\dot B^{n/p}_{p,1}}\|D\du\|_{L_t^1(\dot B^{n/p}_{p,1})}.
 $$
So we eventually get  uniqueness and continuity of the flow map on a small enough time interval. 
  Then iterating the proof yields uniqueness on the initial time interval $[0,T].$
   Note that it also yields Lipschitz continuity of the flow map for the velocity
  as for fixed  data $(\rho_0^1,u_0^1),$ one may find some neighborhood and common time interval on which 
  all the solutions constructed in the previous steps exist.


  \subsection{Proof of Theorem \ref{th:main2}}
  
 For $u_0\in \dot B^{n/p-1}_{p,1}$ and $\rho_0\in (1+\dot B^{n/p}_{p,1}),$ the local existence for \eqref{eq:euler} may be 
proved directly (see \cite{CMZ1,D1})  \emph{but only under the assumption that $p\leq n$ in the case of 
nonconstant viscosity coefficients}.   Here we get the result (including uniqueness)
from  Theorem \ref{th:main1},  and under the sole assumption that $p<2n.$ 
This is a mere corollary of the following proposition which states the equivalence
of the systems \eqref{eq:euler} and \eqref{eq:lagrangian} in our functional  setting.
 \begin{prop}\label{p:equiv}
  Assume that the couple $(\rho,u)$ with $(\rho-1)\in\cC([0,T];\dot B^{n/p}_{p,1})$
and $u\in E_p(T)$ (with $1\leq p<2n$) is a solution to \eqref{eq:euler} such that
\begin{equation}\label{eq:smallu}
\int_0^T\|\nabla u\|_{\dot B^{n/p}_{p,1}}\,dt\leq c.
\end{equation}  
Let $X$ be the flow of $u$ defined in \eqref{lag}. 
Then the couple $(\bar\rho,\bar u):=(\rho\circ X,u\circ X)$ belongs
to the same functional space as $(\rho,u),$ and satisfies \eqref{eq:lagrangian}. 
\medbreak
Conversely, if  $(\bar\rho-1,\bar u)$ belongs to 
$\cC([0,T];\dot B^{n/p}_{p,1})\times E_p(T)$ and $(\bar\rho,\bar u)$ satisfies \eqref{eq:lagrangian} 
and, for a small enough constant $c,$
\begin{equation}\label{eq:smallbaru}
\int_0^T\|\nabla \bar u\|_{\dot B^{n/p}_{p,1}}\,dt\leq c
\end{equation} then 
the map
$X$ defined in \eqref{eq:lag} is a $ C^1$ (and in fact a locally $\dot B^{n/p+1}_{p,1}$) diffeomorphism over $\R^n$
and the couple $(\rho,u):=(\bar\rho\circ X^{-1},\bar u\circ X^{-1})$ satisfies
\eqref{eq:euler} and has the same regularity as $(\bar\rho,\bar u).$
\end{prop}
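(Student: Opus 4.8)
The plan is to prove the two implications by tracking regularity through the change of variables, all the product, composition and flow estimates being those collected in the Appendix.

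\emph{From Eulerian to Lagrangian.} Since $u\in E_p(T)$ one has $\nabla u\in L^1(0,T;\dot B^{n/p}_{p,1})\hookrightarrow L^1(0,T;L^\infty)$, so the flow $X$ of \eqref{lag} is well defined; moreover, by \eqref{eq:smallu} and the flow estimates \eqref{eq:U1}, \eqref{eq:U2}, \eqref{eq:J}, \eqref{eq:dAdj}, each of $DX-\Id$, $A-\Id$, $J-1$, $\adj(DX)-\Id$ stays, uniformly on $[0,T]$, $O(c)$-small in $\dot B^{n/p}_{p,1}$; in particular $J$ is bounded away from $0$ and $X(t,\cdot)$ is a bi-Lipschitz diffeomorphism of $\R^n$. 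I would then invoke the composition-stability lemmas of the Appendix — boundedness of $f\mapsto f\circ X$ on $\dot B^s_{p,1}$ for $X$ close to $\Id$ and $s$ in the relevant range (which, under $n\ge2$ and $p<2n$, covers $s=n/p$ and $s=n/p-1$), together with its continuity-in-time version — to get $\bar\rho-1=(\rho-1)\circ X\in\cC([0,T];\dot B^{n/p}_{p,1})$ and $\bar u=u\circ X\in\cC([0,T];\dot B^{n/p-1}_{p,1})$. It then remains to control the parabolic part of $\|\bar u\|_{E_p(T)}$. For the time derivative I would use $\d_t\bar u=(\d_tu+u\cdot\nabla u)\circ X$ and rewrite, with the help of the continuity equation, the momentum equation of \eqref{eq:euler} as $\d_tu+u\cdot\nabla u=\rho^{-1}\bigl(2\div(\mu(\rho)D(u))+\nabla(\lambda(\rho)\div u)-\nabla P(\rho)\bigr)$; applying the composition inequality \eqref{eq:compo1} to $\mu(\rho)-\mu(1)$, $\lambda(\rho)-\lambda(1)$, $P(\rho)$ and $\rho^{-1}-1$ (smooth functions of $a=\rho-1$ vanishing at $a=0$, $\rho$ being away from $0$), the algebra property of $\dot B^{n/p}_{p,1}$ and the product law $\dot B^{n/p}_{p,1}\times\dot B^{n/p-1}_{p,1}\to\dot B^{n/p-1}_{p,1}$ (here $p<2n$ enters) give $\d_tu+u\cdot\nabla u\in L^1(0,T;\dot B^{n/p-1}_{p,1})$, hence $\d_t\bar u\in L^1(0,T;\dot B^{n/p-1}_{p,1})$ by composition stability. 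For $\nabla^2\bar u$, the chain rule expands it into terms schematically of the form $D^2X\cdot\bigl((\nabla u)\circ X\bigr)$ and $DX\cdot DX\cdot\bigl((\nabla^2u)\circ X\bigr)$; since $D^2X\in L^\infty_T(\dot B^{n/p-1}_{p,1})$, $DX\in L^\infty_T(\dot B^{n/p}_{p,1})$, $(\nabla u)\circ X\in L^1_T(\dot B^{n/p}_{p,1})$ and $(\nabla^2u)\circ X\in L^1_T(\dot B^{n/p-1}_{p,1})$, product laws and H\"older's inequality in time give $\nabla^2\bar u\in L^1(0,T;\dot B^{n/p-1}_{p,1})$, so $\bar u\in E_p(T)$. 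With this regularity every operation in the computation preceding \eqref{eq:lagrangian} is licit, so the chain rule and Lemma \ref{l:div} show $(\bar\rho,\bar u)$ solves \eqref{eq:lagrangian}; and since $u(\tau,X(\tau,y))=\bar u(\tau,y)$, the flow \eqref{lag} of $u$ coincides with the map \eqref{eq:lag} built from $\bar u$.

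\emph{From Lagrangian to Eulerian.} Here I would proceed the other way round: given $\bar u\in E_p(T)$, define $X$ by \eqref{eq:lag}, so $DX-\Id=\int_0^t D\bar u\,d\tau$ is, by \eqref{eq:smallbaru}, $O(c)$-small both in $L^\infty_T(L^\infty)$ and in $L^\infty_T(\dot B^{n/p}_{p,1})$. The first smallness, together with the boundedness of $X-\Id$, makes $X(t,\cdot)$ a global $C^1$ diffeomorphism of $\R^n$ (global inverse function theorem). Writing $A=(DX)^{-1}$ as the Neumann series $\sum_{k\ge0}(\Id-DX)^k$, which converges in the algebra $\dot B^{n/p}_{p,1}$, one gets the analogous estimates for $A-\Id$, $J-1$, $\adj(DX)-\Id$; and since $D^2X=\int_0^t D^2\bar u\,d\tau\in L^\infty_T(\dot B^{n/p-1}_{p,1})$, the map $X(t,\cdot)-\Id$ is locally in $\dot B^{n/p+1}_{p,1}$. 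The inverse flow $X^{-1}(t,\cdot)$ enjoys the same bounds. Setting $(\rho,u):=(\bar\rho\circ X^{-1},\bar u\circ X^{-1})$, composition stability (now with $X^{-1}$) gives $\rho-1\in\cC([0,T];\dot B^{n/p}_{p,1})$ and $u\in E_p(T)$ exactly as before, and $\rho$ is bounded away from $0$ since $\dot B^{n/p}_{p,1}\hookrightarrow L^\infty$. Finally, one undoes the change of variables: the first equation of \eqref{eq:lagrangian} integrates to $J\bar\rho\equiv\rho_0$, which is the Lagrangian form of $\d_t\rho+\div(\rho u)=0$, and substituting $\bar\rho=\rho\circ X$, $\bar u=u\circ X$ in the second equation of \eqref{eq:lagrangian} and reversing the computation that led to \eqref{eq:lagrangian} (again legitimate at this regularity) recovers the momentum equation of \eqref{eq:euler}; consistency of \eqref{eq:lag} with \eqref{lag} is as above.

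The delicate point — the one I expect to absorb most of the work — is not the algebra above but the stable, and time-continuous, action on the critical spaces $\dot B^{n/p}_{p,1}$ and $\dot B^{n/p-1}_{p,1}$ of composition with the near-identity flow, whose own regularity is only $X-\Id\in L^\infty_T(\dot B^{n/p+1}_{p,1})$ (locally); this, and the flow estimates for $A$, $J$ and $\adj(DX)$, are precisely what is relegated to the Appendix. The only genuinely structural input in the body of the proof is the use of the momentum equation to see that $\d_t\bar u$ belongs to $L^1_T(\dot B^{n/p-1}_{p,1})$.
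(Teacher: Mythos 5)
Your proof is correct and follows essentially the same route as the paper's: composition stability in Besov spaces (Proposition \ref{p:change}), the flow estimates of the Appendix, and the algebraic identities \eqref{eq:lag1}--\eqref{eq:lag4} combined with the product laws for $p<2n$. The extra care you devote to $\d_t\bar u$ and $\nabla^2\bar u$ merely fills in details the paper delegates to Proposition \ref{p:change}; note in particular that $\d_tu+u\cdot\nabla u\in L^1_T(\dot B^{n/p-1}_{p,1})$ already follows from $u\in E_p(T)$ and the product law, so your detour through the momentum equation, while valid, is not needed.
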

\begin{proof}
Let us first consider a solution $(\rho,u)$ to \eqref{eq:euler} with the above properties. 
Then, the definition of $X$   implies  that $DX-\Id$ is in $\cC([0,T];\dot B^{n/p}_{p,1}).$
In addition, Proposition \ref{p:change} ensures that   $(\bar\rho,\bar u):=(\rho\circ X,u\circ X)$
belongs to the same functional space as $(\rho,u),$ 
and  \eqref{eq:U1}, \eqref{eq:U2}, \eqref{eq:J} below imply that $A-\Id,$ $\adj(DX)-\Id$ and $J^{-1}-1$ are in $\cC([0,T];\dot B^{n/p}_{p,1}).$
Therefore the product laws for Besov spaces enable us to use  
the algebraic relations \eqref{eq:lag1}, \eqref{eq:lag2}, \eqref{eq:lag3} and \eqref{eq:lag4}
 whenever $p<2n.$ Therefore  $(\bar\rho,\bar u)$ fulfills \eqref{eq:lagrangian}. 
\smallbreak
Conversely, if we are given some solution $(\bar\rho,\bar u)$ in $\cC([0,T];(1+\dot B^{n/p}_{p,1}))\times E_p(T)$ 
to \eqref{eq:lagrangian} then one may check (see the appendix of \cite{D-Chambery}) that, under condition \eqref{eq:smallv},
 the ``flow'' $X(t,\cdot)$ of $\bar u$ defined by  
\begin{equation}\label{eq:defX}
X(t,y):=y+\int_0^t\bar v(\tau,y)\,d\tau
\end{equation}
is a $C^1$ diffeomorphism over $\R^n,$ and satisfies $DX-\Id\in\cC([0,T];\dot B^{n/p}_{p,1}).$
Hence one may construct
the Eulerian vector-field $u$ and Eulerian density by setting 
$$\rho(t,\cdot):=\rho\circ X^{-1}(t,\cdot)\quad\hbox{and}\quad u(t,\cdot):= u\circ X^{-1}(t,\cdot).$$
As above, the algebraic relations \eqref{eq:lag1}, \eqref{eq:lag2}, \eqref{eq:lag3} and \eqref{eq:lag4} hold
 whenever $p<2n.$ Hence  $(\rho,u)$ is a solution to \eqref{eq:euler}. 
 That $(\rho,u)$ has the desired regularity stems from Proposition \ref{p:change}.
\end{proof}
\medbreak\noindent{\em Proof of Theorem \ref{th:main2}.}  
We  consider  data $(\rho_0,u_0)$  with $\rho_0$ bounded away from $0,$ $(\rho_0-1)\in\dot B^{n/p}_{p,1}$ and $u_0\in\dot B^{n/p-1}_{p,1}$
 Then Theorem \ref{th:main1} provides  a local solution $(\bar \rho, \bar u)$
  to System \eqref{eq:lagrangian} in $\cC([0,T];(1+\dot B^{n/p}_{p,1}))\times E_p(T).$
  If $T$ is small enough then \eqref{eq:smallbaru} is satisfied 
so Proposition \ref{p:equiv} ensures that
$(\bar\rho\circ X^{-1},\bar u\circ X^{-1})$ is a solution of \eqref{eq:euler} in the desired functional 
space.
    \smallbreak
In order to prove uniqueness, we consider two solutions
$(\rho^1,u^1)$ and $(\rho^2,u^2)$ corresponding
to the same data $(\rho_0,u_0),$ and 
perform the Lagrangian  change of variable (pertaining to the flow of 
$u^1$ and $u^2$ respectively). 
The obtained vector-fields $\bar u^1$ and 
$\bar u^2$  are in $E_p(T)$ and both satisfy \eqref{eq:lagrangian}
\emph{with the same} $\rho_0$ and $u_0.$ Hence they coincide, as 
a consequence of the uniqueness part of  Theorem \ref{th:main1}. \qed

\appendix
\section{}
\setcounter{equation}{0}

\subsection{Change of coordinates}

Here we establish a result of regularity concerning changes of variables in Besov spaces.
Even though this  is somewhat classical (at least in nonhomogeneous Besov spaces), 
we did not find any reference in the literature of the estimates that we need. 
We here give a result in general Besov spaces $\dot B^s_{p,q},$ 
the definition of which may be found in e.g. \cite{BCD}.
\begin{prop}\label{p:change} Let  $X$ be a globally bi-Lipschitz diffeomorphism of $\R^n$
and $(s,p,q)$ with $1\leq p <\infty$ and $-n/p'<s <n/p$
(or just $-n/p'<s\leq n/p$ if $q=1$ and just $-n/p'\leq s<n/p$ if $q=\infty$).

Then   $a\mapsto a\circ X$ is a self-map  over $\dot B^s_{p,q}$ in the following cases:
\begin{enumerate}
\item  $s\in(0,1),$
\item $s\in(-1,0]$ and $J_{X^{-1}}$ is in the multiplier space
$\cM(\dot B^{-s}_{p',q'})$ defined in \eqref{eq:defmult},
\item $s\geq1$ and $(DX-\Id) \in \dot B^{n/p}_{p,1}.$
\end{enumerate}
\end{prop}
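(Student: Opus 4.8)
The three cases call for three different (and essentially classical) devices, all exploiting that $X$ is \emph{globally bi-Lipschitz}, so that $\|DX\|_{L^\infty}$, $\|DX^{-1}\|_{L^\infty}$ and the Jacobians $J_X,J_{X^{-1}}$ are bounded from above and from below. For case (1), $s\in(0,1)$, I would argue by real interpolation: the change of variables $z=X(y)$ gives $\|a\circ X\|_{L^p}\lesssim\|J_{X^{-1}}\|_{L^\infty}^{1/p}\|a\|_{L^p}$, and the a.e.\ chain rule $\nabla(a\circ X)={}^T\!(DX)\,(\nabla a)\circ X$ gives $\|\nabla(a\circ X)\|_{L^p}\lesssim\|DX\|_{L^\infty}\|J_{X^{-1}}\|_{L^\infty}^{1/p}\|\nabla a\|_{L^p}$, so $T_X:a\mapsto a\circ X$ is bounded on $L^p$ and on $\dot W^{1,p}$; since $\dot B^s_{p,q}=(L^p,\dot W^{1,p})_{s,q}$ for $0<s<1$, boundedness on $\dot B^s_{p,q}$ follows. (One could instead use the finite-difference characterisation of $\dot B^s_{p,q}$, but then one must cope with the \emph{non-constant} shift $y\mapsto X(X^{-1}(y)+h)-y$ produced by the change of variables, which is exactly what the interpolation argument avoids.)

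For case (2), $s\in(-1,0]$, I would use duality. For $1<p<\infty$ one has $(\dot B^s_{p,q})'=\dot B^{-s}_{p',q'}$, and for a test function $\phi$,
\[
\langle a\circ X,\phi\rangle=\int a(X(y))\phi(y)\,dy=\int a(z)\,\phi(X^{-1}(z))\,J_{X^{-1}}(z)\,dz=\bigl\langle a,\,J_{X^{-1}}\,(\phi\circ X^{-1})\bigr\rangle,
\]
hence $\|a\circ X\|_{\dot B^s_{p,q}}\le\|a\|_{\dot B^s_{p,q}}\sup\{\|J_{X^{-1}}(\phi\circ X^{-1})\|_{\dot B^{-s}_{p',q'}}:\|\phi\|_{\dot B^{-s}_{p',q'}}\le1\}$. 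As $X^{-1}$ is again bi-Lipschitz and $-s\in[0,1)$, case (1) applied to $X^{-1}$ bounds $\|\phi\circ X^{-1}\|_{\dot B^{-s}_{p',q'}}$ for $-s\in(0,1)$, and the hypothesis $J_{X^{-1}}\in\cM(\dot B^{-s}_{p',q'})$ then closes the estimate. The endpoint $s=0$ needs separate care, since $\dot B^0_{p',q'}$ is sensitive to the third index; there one can interpolate the boundedness of $\phi\mapsto\phi\circ X^{-1}$ between $\dot B^{\varepsilon}_{p',q'}$ and $\dot B^{-\varepsilon}_{p',q'}$.

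For case (3), $s\ge1$, I would induct on $\lfloor s\rfloor$, using the homogeneous equivalence $\|f\|_{\dot B^s_{p,q}}\sim\|\nabla f\|_{\dot B^{s-1}_{p,q}}$; by density it suffices to treat smooth $a$, for which $\partial_j(a\circ X)=\sum_k(\partial_jX^k)\bigl((\partial_ka)\circ X\bigr)$. Writing $\partial_jX^k=\delta_{jk}+(\partial_jX^k-\delta_{jk})$ with $\partial_jX^k-\delta_{jk}\in\dot B^{n/p}_{p,1}$ by hypothesis, the diagonal term is $(\partial_ka)\circ X$ with $\partial_ka\in\dot B^{s-1}_{p,q}$, handled by cases (1)--(2) if $s-1\in[0,1)$ and by the induction hypothesis if $s-1\ge1$; the off-diagonal term is a product of an element of $\dot B^{n/p}_{p,1}$ with one of $\dot B^{s-1}_{p,q}$, which stays in $\dot B^{s-1}_{p,q}$ by the product laws of the Appendix — this is where the standing constraint $-n/p'<s\le n/p$ is used, to keep $s-1$ in the admissible range. (Since $\dot B^{n/p}_{p,1}$ is an algebra, $\adj(DX)-\Id$ and $J_{X^{-1}}-1$ then also belong to $\dot B^{n/p}_{p,1}$, which provides the extra information needed for the $s=0$ step of case (2).) Summing over $j,k$ bounds $\|\nabla(a\circ X)\|_{\dot B^{s-1}_{p,q}}$, hence $\|a\circ X\|_{\dot B^s_{p,q}}$.

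I expect the real difficulty to be the composition estimate underlying case (1) — specifically the non-constant-shift phenomenon, which the interpolation argument dissolves — together with the low-regularity endpoints ($s=0$ in case (2) and $s=1$ in case (3)), where $\dot B^0_{p,q}$-type spaces are touchy; once these are in hand, the inductive bookkeeping of case (3) is routine given the product and commutator lemmas already stated in the Appendix.
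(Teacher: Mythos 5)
Your proposal is correct overall, and for case (1) it takes a genuinely different route from the paper. The paper never meets the non-constant-shift difficulty you anticipate, because it does not use translated differences: it invokes the Gagliardo-type double-integral characterization
$\|u\|_{\dot B^s_{p,p}}^p\sim\int\!\!\int|u(y)-u(x)|^p\,|y-x|^{-n-sp}\,dy\,dx$,
performs the change of variables $x'=X(x)$, $y'=X(y)$ (the bi-Lipschitz bounds control the kernel directly), uses $s<n/p$ to place $u\circ X$ in some $L^{p^*}$ and settle the realization issue, and only then interpolates in the third index to pass from $q=p$ to general $q$. Your argument instead interpolates between $L^p$ and $\dot W^{1,p}$ via the a.e.\ chain rule for bi-Lipschitz maps; this reaches all $q$ in one stroke and avoids the detour through $q=p$, at the price of invoking the real-interpolation identity $(L^p,\dot W^{1,p})_{s,q}=\dot B^s_{p,q}$ for homogeneous spaces. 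Cases (2) and (3) follow the paper's own scheme (duality through $\langle a\circ X,\phi\rangle=\langle a,\,J_{X^{-1}}(\phi\circ X^{-1})\rangle$, then chain rule, product laws and induction), and you are more explicit than the paper about the delicate endpoints. One caveat there: your fix for $s=0$ in case (2) — interpolating the boundedness of $\phi\mapsto\phi\circ X^{-1}$ between $\dot B^{\varepsilon}_{p',q'}$ and $\dot B^{-\varepsilon}_{p',q'}$ — is circular as stated, since the bound on $\dot B^{-\varepsilon}_{p',q'}$ is itself an instance of case (2) for $X^{-1}$ and would require a multiplier hypothesis on $J_X$ that is not assumed; the paper's own proof is, however, no more explicit at this endpoint, and in case (3), where this step is actually invoked with $s=1$, your observation that $DX-\Id\in\dot B^{n/p}_{p,1}$ supplies the missing multiplier information does close the argument.
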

\begin{p}
Let us first assume that $s\in(0,1)$ and $q=p.$ Then one may use the classical characterization 
of the norm of $\dot B^s_{p,p}$ in terms of  finite differences (see e.g. \cite{BCD}) so as to write:
$$
 \|u\circ X\|_{\dot B^s_{p,p}(\R^n)}=
\biggl( \int_{\R^n}\!\!\int_{\R^n}\frac{|u(X(y))-u(X(x))|^p}{|y-x|^{n+sp}}\,dy\,dx\biggr)^{\frac 1p}\cdotp
 $$
Hence performing the change of variable $x'=X(x)$ and $y'=X(y),$ we get
$$
 \|u\circ X\|_{\dot B^s_{p,p}(\R^n)}=
\biggl( \int_{\R^n}\!\!\int_{\R^n}\frac{|u(y')-u(x')|^p}{|X^{-1}(y')-X^{-1}(x')|^{n+sp}}J_{X^{-1}}(y')
J_{X^{-1}}(x')\,dy'\,dx'\biggr)^{\frac 1p}
 $$
 whence 
 $$
 \|u\circ X\|_{\dot B^s_{p,1}(\R^n)}\leq \|J_{X^{-1}}\|_{L_\infty(\R^n)}^{\frac2p}
 \|DX\|_{L_\infty(\R^n)}^{s+\frac np}\|u\|_{\dot B^s_{p,1}(\R^n)}.
 $$
  The condition that $s<n/p$ ensures in addition that
 $u$ belongs to some Lebesgue space $L_{p^*}(\R^n)$ with $p^*<\infty$ (or in 
 the set of continuous functions going to $0$ at infinity if $q=1$ and $s=n/p$). Hence
 $u\circ X\in L_{p^*}(\R^n)$ too and one may thus conclude that
 $u\circ X\in \dot B^s_{p,p}(\R^n).$
 An interpolation argument then yields the desired result for any $s\in(0,1)$ and $q\in[1,+\infty].$
\medbreak
 The result for negative $s$ may be achieved by duality:  we have
  $$
 \|u\circ X\|_{\dot B^s_{p,q}(\R^n)}=\sup_{\|v\|_{\dot B^{-s}_{p',q'}(\R^n)}\leq1}\int_{\R^n} v(z) u(X(z))\,dz.
 $$
 Now, setting $x=X(z),$  we have
 $$
 \begin{array}{lll}
 \Int_{\R^n} v(z) u(X(z))\,dz&=&\Int_{\R^n} u(x)v(X^{-1}(x))\,dx,\\[1ex]
 &=&\Int_{\R^n} u(x)v(X^{-1}(x)) J_{X^{-1}}(x)\,dx,\\[2ex]
 &\leq& \|u\|_{\dot B^s_{p,q}(\R^n)}\|v\circ X^{-1} J_{X^{-1}}\|_{\dot B^{-s}_{p',q'}(\R^n)}.
 \end{array}
 $$
 So the definition of the multiplier space and the first part of the lemma allows to conclude. 
\medbreak
Finally, let us examine the cases of larger values of $s.$ 
If  $1<s<2$ then  one may write
$$
D(u\circ X)=(Du\circ X)\cdot DX.
$$
As $0<s-1<1,$ the first part of the proof ensures that $Du\circ X\in \dot B^{s-1}_{p,q}.$
As moreover $(DX-\Id)\in \dot B^{n/p}_{p,1},$ the standard product laws in Besov
spaces give the result.

If $2<s<3$ then we use the algebraic relation,
$$
D^2(u\circ X)= (D^2u\circ X)(DX,DX)+D^2X\cdot (Du\circ X).
$$
Hence the result follows from product laws and the previous result
applied with $s-1$ or $s-2.$

The higher values of $s$ may be achieved by induction, 
and the remaining cases ($s$ an integer) follow by interpolation.
The details are left to the reader.
\end{p}


\subsection{Some properties of Lagrangian coordinates}

Let us first derive  a few algebraic relations involving changes of  coordinates. 
We are  given a  $C^1$-diffeomorphism $X$ over $\R^n.$
For $H:\R^n\rightarrow\R^m,$ we agree that  $\bar H(y)= H(x)$ with $x=X(y).$ 
With this convention,  the chain rule writes
\begin{equation}\label{eq:chainrule}
D_y\bar H(y)=D_xH(X(y))\cdot D_yX(y)
\quad\hbox{with }\   (D_xH)_{ij}=\d_{x_j}H^i\ \hbox{ and }\ 
(D_yX)_{ij}=\d_{y_j}X^i,
\end{equation}
or, denoting $\nabla_y={}^T\!D_y$ and $\nabla_x={}^T\!D_x,$  
$$
\nabla_y\bar H(y)= (\nabla_yX(y))\cdot\nabla_xH(X(y)).
$$

Hence  we have 
\begin{equation}\label{eq:A}
D_xH(x)=D_y\bar H(y)\cdot A(y)\quad\hbox{with}\quad A(y)=(D_yX(y))^{-1}=D_xX^{-1}(x).
\end{equation}

\begin{lem}\label{l:div} Let  $K$ be   a $C^1$ scalar function over
$\R^n$ and $H,$  a $C^1$  vector-field.
  Let $X$ be a $C^1$ diffeomorphism such that  $J:=\det(D_yX)>0.$ Then the following relations hold true: 
\begin{eqnarray}
\label{eq:div2}
&&\overline{\nabla_xK}=J^{-1}\divy(\adj(D_yX)\bar K),\\[1ex]\label{eq:div1}
&&\overline{\divx  H}=J^{-1}\divy(\adj(D_yX)\bar H),\end{eqnarray}
where  $\adj(D_yX)$ stands for the adjugate of $D_yX.$\end{lem}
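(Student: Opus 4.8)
The two identities \eqref{eq:div2} and \eqref{eq:div1} are purely pointwise algebraic consequences of the chain rule \eqref{eq:A} together with Jacobi's formula for the derivative of a determinant, so the plan is to establish them by a direct computation in coordinates, assuming only that $X$ is $C^1$ with $J>0$. I will treat the vector-field case \eqref{eq:div1} first, since the scalar case \eqref{eq:div2} follows from it by applying \eqref{eq:div1} componentwise (replacing $H$ by the vector field $K e_k$ for each basis vector $e_k$ and reading off the $k$-th component, using that $\divx(K e_k)=\partial_{x_k}K$).

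\textbf{Key steps for \eqref{eq:div1}.} First I would write, using \eqref{eq:A}, that for $x=X(y)$,
$$
\overline{\divx H}(y)=\sum_i (\partial_{x_i}H^i)(X(y))=\sum_{i,j}(D_y\bar H)_{ij}(y)\,A_{ji}(y)={}^T\!A:\nabla_y\bar H,
$$
i.e. $\overline{\divx H}=\divA\bar H$ in the notation of Section 1. Next I would expand the right-hand side of \eqref{eq:div1}: since $\adj(D_yX)=J\,A$ (as $J>0$, $A=(D_yX)^{-1}$), we have
$$
\divy(\adj(D_yX)\bar H)=\sum_{i,j}\partial_{y_j}\bigl(J A_{ji}\bar H^i\bigr)
=\sum_{i,j}\bigl(\partial_{y_j}(J A_{ji})\bigr)\bar H^i+\sum_{i,j}J A_{ji}\,\partial_{y_j}\bar H^i.
$$
The second sum is exactly $J\,\divA\bar H=J\,\overline{\divx H}$, so \eqref{eq:div1} reduces to the classical \emph{Piola identity}
$$
\sum_j \partial_{y_j}\bigl(\adj(D_yX)_{ji}\bigr)=\sum_j\partial_{y_j}(JA_{ji})=0\qquad\text{for each }i.
$$

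\textbf{Proving the Piola identity (the main point).} This is the only nontrivial step. I would prove it by first assuming $X\in C^2$ and then removing that hypothesis by a density/approximation argument. For $X\in C^2$: expand $J=\det(D_yX)$ along, say, the $i$-th column, so that $\adj(D_yX)_{ji}$ is (up to sign) the minor obtained by deleting row $i$ and column $j$; equivalently use that $\partial J/\partial(\partial_{y_j}X^i)=\adj(D_yX)_{ji}$ (Jacobi's formula). Differentiating $\sum_j(\partial_{y_j}X^i)\adj(D_yX)_{ji}=J$ and comparing, or more directly computing $\sum_j\partial_{y_j}\adj(D_yX)_{ji}$ term by term, one finds that all contributions cancel in pairs because mixed second derivatives $\partial_{y_j}\partial_{y_k}X^\ell$ are symmetric in $j,k$ while the cofactor structure is antisymmetric; this is the standard computation. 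To pass to $X$ merely $C^1$, approximate $X$ in $C^1_{loc}$ by smooth diffeomorphisms $X_\varepsilon$ (mollification, keeping $J_\varepsilon>0$ for $\varepsilon$ small on compact sets), note that $\adj(D_yX_\varepsilon)\to\adj(D_yX)$ locally uniformly, and observe that the identity $\divy(\adj(D_yX_\varepsilon))=0$ holds in the sense of distributions and is stable under this convergence; hence $\divy(\adj(D_yX))=0$ in $\cD'$. Finally, inserting the Piola identity back gives \eqref{eq:div1}, and then \eqref{eq:div2} follows by the componentwise reduction described above. The main obstacle is purely the bookkeeping in the cofactor cancellation and the (routine but slightly delicate) justification of the approximation step under the weak $C^1$ hypothesis; everything else is immediate from \eqref{eq:A} and $\adj(D_yX)=JA$.
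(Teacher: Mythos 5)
Your proof is correct, but it takes a genuinely different route from the paper's. You argue pointwise: reduce \eqref{eq:div2} to \eqref{eq:div1} componentwise, expand $\divy(\adj(D_yX)\bar H)$ by the Leibniz rule, identify the surviving term as $J\,\overline{\divx H}$ via the chain rule \eqref{eq:A}, and reduce everything to the Piola identity $\divy(\adj(D_yX))=0$, which you then prove by the cofactor-cancellation computation for $C^2$ maps followed by mollification to handle the $C^1$ case. The paper instead proves \eqref{eq:div2} first by duality: it tests $\nabla_xK$ against an arbitrary $\phi\in\cC^\infty_c$, integrates by parts in $x$, changes variables $x=X(y)$, rewrites $\overline{\divx\phi}$ as $D_y\bar\phi:A$ via \eqref{eq:A}, and integrates by parts back in $y$; the second identity is declared similar. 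The two arguments are equivalent in substance — the paper's final integration by parts in $y$ silently absorbs exactly the Piola cancellation that you make explicit — but the weak formulation buys a shorter proof that is automatically meaningful under the bare $C^1$ hypothesis (where $\adj(D_yX)$ is merely continuous and its divergence only exists distributionally), with no need for an approximation step. Your version buys an explicit identification of the algebraic mechanism (Jacobi's formula and the symmetry of mixed second derivatives) and yields a genuinely pointwise identity whenever $X$ is $C^2$; the price is the extra density argument, which you handle correctly (and note that the Piola identity needs no invertibility of $X_\varepsilon$, so preserving $J_\varepsilon>0$ is not actually required there). Both proofs rest on the same two ingredients, \eqref{eq:A} and $\adj(D_yX)=JA$, so there is no gap in your argument.
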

\begin{p}
The first item stems  from the following series of computations (based on integrations by parts, changes of variable 
and \eqref{eq:A})  which hold for any vector-field $\phi$ with coefficients in $\cC_c^\infty(\R^n)$:  
$$
\begin{array}{lll}
\Int \nabla_x  K(x)\cdot\phi(x)\,dx&=&-\Int K(x)\divx \phi(x)\,dx,\\[1ex]
&=&-\Int  \bar K(y)\overline{\divx \phi}(y) J(y)\,dy\\[1ex]
&=&-\Int   J(y)\bar K(y)\,D_y\bar\phi(y):A(y)\,dy,\\[1ex]
&=&\Int \bar \phi(y)\cdot \divy(\adj(D_yX)\bar K)(y)\,dy,\\[1ex]
&=&\Int\phi(x)\cdot \divy(\adj(D_yX)\bar K)(X^{-1}(x)) J^{-1}(X^{-1}(x))\,dx.\end{array}
$$
Proving  the second  item is similar.
\end{p}

Combining \eqref{eq:A},  \eqref{eq:div1} and  \eqref{eq:div2}, we deduce that  if $u:\R^n\rightarrow\R^n$  and $P:\R^n\rightarrow \R$ then
\begin{eqnarray}\label{eq:lag1}
&&\overline{\Delta_x u}=J^{-1}\divy(\adj(D_yX)\overline{\nabla_xu})
=J^{-1}\divy (\adj(D_yX){}^T\!A\nabla_y\bar u),\\\label{eq:lag2}
&&\overline{\nabla_x\divx u}=J^{-1}\divy(\adj(D_yX)\overline{\divx u})
=J^{-1}\divy (\adj(D_yX){}^T\!A:\nabla_y\bar u),\\\label{eq:lag3}
&&\overline{\nabla_xP}=J^{-1}\divy(\adj(D_yX)\bar P).
\end{eqnarray}
Note that we will use the above relations in the case where
$X$ is the flow of some time-dependent vector field $u,$ defined by the relation 
$$
X(t,y)=y+\int_0^t u(\tau,X(\tau,y))\,d\tau\quad\hbox{for all }\ t\in[0,T].
$$
Hence we will also have
\begin{equation}\label{eq:lag4}
J\,\overline{\d_t\rho+\div(\rho u)}=\d_t(J\bar\rho)\quad\hbox{and}\quad
 J\,\overline{\d_t(\rho u)+\div(\rho u\otimes u)}=\d_t(J\bar\rho \bar u).
\end{equation}

\smallbreak
Let us now establish  some estimates for  the flow $X_v$ of some given 
``Lagrangian'' vector field (that is  $X_v$ is defined by \eqref{eq:defX}).  
\begin{lem}\label{eq:flow} Let $p\in[1,+\infty)$ and $\bar v$ be in $E_p(T)$ 
satisfying  \eqref{eq:smallv}. Let $X_v$ be defined by \eqref{eq:defX}.  
Then we have for all $t\in[0,T],$
\begin{eqnarray}\label{eq:U1}
&&\|\Id-\adj(DX_v(t))\|_{\dot  B^{n/p}_{p,1}}\lesssim \|D\bar v\|_{L_t^1(\dot B^{n/p}_{p,1})},\\
\label{eq:U2}
&&\|\Id-A_v(t)\|_{\dot B^{n/p}_{p,1}}\lesssim \|D\bar v\|_{L_t^1(\dot B^{n/p}_{p,1})},\\
\label{eq:J}
&&\|J_v^{\pm1}(t)-1\|_{\dot B^{n/p}_{p,1}}\lesssim \|D\bar v\|_{L_t^1(\dot B^{n/p}_{p,1})}.
\end{eqnarray}
Furthermore, if $\bar w$ is a vector field such that $D\bar w\in L^1(0,T;\dot B^{n/p}_{p,1})$ then
\begin{eqnarray}\label{eq:U3}
\|(\adj(DX_v)D_{A_v}(\bar w)-D(\bar w))(t)\|_{\dot B^{n/p}_{p,1}}\lesssim
\|D\bar v\|_{L_t^1(\dot B^{n/p}_{p,1})}\|D\bar w\|_{L_t^1(\dot B^{n/p}_{p,1})},\\
\label{eq:U4}
\|(\adj(DX_v)\div_{\!A_v}(\bar w)-\div\bar w\:\Id)(t)\|_{\dot B^{n/p}_{p,1}}\lesssim
\|D\bar v\|_{L_t^1(\dot B^{n/p}_{p,1})}\|D\bar w\|_{L_t^1(\dot B^{n/p}_{p,1})}.
\end{eqnarray}
\end{lem}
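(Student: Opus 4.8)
The statement to be proved is Lemma~\ref{eq:flow}, giving the flow estimates \eqref{eq:U1}--\eqref{eq:U4}. The plan is to build everything on two elementary facts: first, that $\dot B^{n/p}_{p,1}$ is a Banach algebra when $1\leq p<\infty$ (stated in the excerpt), and second, that under the smallness condition \eqref{eq:smallv} all the relevant matrix-valued quantities ($DX_v-\Id$, $A_v-\Id$, $\adj(DX_v)-\Id$, $J_v-1$) are perturbations of the identity with small $L^1_t(\dot B^{n/p}_{p,1})$, indeed $L^\infty_t(\dot B^{n/p}_{p,1})$, norm. First I would record that, directly from the definition $X_v(t,y)=y+\int_0^t\bar v(\tau,y)\,d\tau$, one has
$$
DX_v(t)-\Id=\int_0^t D\bar v(\tau)\,d\tau,
\qquad\hbox{so}\qquad
\|DX_v(t)-\Id\|_{\dot B^{n/p}_{p,1}}\leq\|D\bar v\|_{L^1_t(\dot B^{n/p}_{p,1})}\leq c.
$$
This is the seed estimate from which all the others follow.

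\textbf{Key steps.} Step one: estimate $A_v-\Id$. Write $A_v=(DX_v)^{-1}=(\Id+(DX_v-\Id))^{-1}=\sum_{k\geq0}(-1)^k(DX_v-\Id)^k$, the Neumann series converging in the algebra $\dot B^{n/p}_{p,1}$ because $\|DX_v-\Id\|_{\dot B^{n/p}_{p,1}}\leq c<1$. Hence $A_v-\Id=-\sum_{k\geq1}(-1)^{k+1}(DX_v-\Id)^k$, which gives $\|A_v(t)-\Id\|_{\dot B^{n/p}_{p,1}}\lesssim\|DX_v(t)-\Id\|_{\dot B^{n/p}_{p,1}}\lesssim\|D\bar v\|_{L^1_t(\dot B^{n/p}_{p,1})}$, proving \eqref{eq:U2}. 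Step two: the adjugate and the Jacobian. Since $\adj(DX_v)$ is a polynomial (with integer coefficients) in the entries of $DX_v$, and since $\adj(\Id)=\Id$, one expands $\adj(DX_v)-\Id$ in powers of $DX_v-\Id$: every term contains at least one factor $DX_v-\Id$, so the algebra property gives \eqref{eq:U1}. Similarly $J_v=\det DX_v$ is a polynomial in the entries with $\det(\Id)=1$, whence $\|J_v(t)-1\|_{\dot B^{n/p}_{p,1}}\lesssim\|DX_v(t)-\Id\|_{\dot B^{n/p}_{p,1}}$; and $J_v^{-1}-1$ is handled exactly as $A_v-\Id$ via a Neumann/geometric series once we know $J_v-1$ is small. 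Since $\dot B^{n/p}_{p,1}\hookrightarrow L^\infty$, smallness in the Besov norm controls $|J_v-1|$ pointwise, keeping $J_v$ bounded away from $0$, so the series converges. This proves \eqref{eq:J}.

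\textbf{The bilinear estimates.} Step three: \eqref{eq:U3} and \eqref{eq:U4}. Here the point is that the combinations $\adj(DX_v)D_{A_v}(\bar w)-D(\bar w)$ and $\adj(DX_v)\div_{\!A_v}(\bar w)-\div\bar w\,\Id$ vanish when $X_v=\Id$ (i.e. when $\bar v\equiv0$), because then $\adj(DX_v)=\Id$, $A_v=\Id$, $D_{A_v}(\bar w)=D(\bar w)$ and $\divA{}\bar w=\div\bar w$. Recalling $D_{A}(\bar w)=\tfrac12(D\bar w\cdot A+{}^T\!A\cdot\nabla\bar w)$, I would write
$$
\adj(DX_v)D_{A_v}(\bar w)-D(\bar w)
=(\adj(DX_v)-\Id)D_{A_v}(\bar w)
+\tfrac12\bigl(D\bar w\,(A_v-\Id)+{}^T\!(A_v-\Id)\,\nabla\bar w\bigr),
$$
and then estimate each piece by the algebra property together with Steps one and two: $\|\adj(DX_v)-\Id\|_{\dot B^{n/p}_{p,1}}$ and $\|A_v-\Id\|_{\dot B^{n/p}_{p,1}}$ are both $\lesssim\|D\bar v\|_{L^1_t(\dot B^{n/p}_{p,1})}$, while $\|A_v\|_{\dot B^{n/p}_{p,1}}$ contributes the harmless factor $1+c$, and the remaining factor is $\|D\bar w\|_{\dot B^{n/p}_{p,1}}$ after integrating in time. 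Taking $L^1_t$ norms and using $\|fg\|_{L^1_t(\dot B^{n/p}_{p,1})}\leq\|f\|_{L^\infty_t(\dot B^{n/p}_{p,1})}\|g\|_{L^1_t(\dot B^{n/p}_{p,1})}$ yields \eqref{eq:U3}; the divergence version \eqref{eq:U4} is identical with $\divA{}\bar w={}^T\!A:\nabla\bar w$ in place of $D_A(\bar w)$.

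\textbf{Main obstacle.} There is no deep difficulty: the only point requiring a little care is the convergence of the Neumann/geometric series in Step one (for $A_v$) and in Step two (for $J_v^{-1}$), which needs $\|DX_v-\Id\|_{\dot B^{n/p}_{p,1}}<1$ and $\|J_v-1\|_{L^\infty}<1$ respectively --- both guaranteed by choosing the constant $c$ in \eqref{eq:smallv} small enough, using the embedding $\dot B^{n/p}_{p,1}\hookrightarrow L^\infty$. One should also check that $\dot B^{n/p}_{p,1}$ being an algebra is precisely what makes products of finitely many (or convergent series of) perturbations of $\Id$ stay in the space with controlled norm; this is the reason the restriction $p<\infty$ appears. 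The rest is bookkeeping of the multilinear expansions, which I would leave to the reader.
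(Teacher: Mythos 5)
Your proof is correct and follows essentially the same route as the paper: Neumann/geometric series for $A_v-\Id$ and $J_v^{-1}-1$, a polynomial expansion of the adjugate with no constant term, the same algebraic splitting of $\adj(DX_v)D_{A_v}(\bar w)-D(\bar w)$ into pieces each carrying a factor $\adj(DX_v)-\Id$ or $A_v-\Id$, all combined via the Banach algebra property of $\dot B^{n/p}_{p,1}$. The only cosmetic difference is that for \eqref{eq:J} the paper uses the identity $J_v(t)=1+\int_0^t D\bar v:\adj(DX_v)\,d\tau$ rather than expanding $\det DX_v$ as a polynomial in $DX_v-\Id$; both yield the same bound.
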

\begin{p}
Recall that (see e.g. the appendix of \cite{DM-cpam})  for any $n\times n$ matrix $C$ we have
\begin{equation}\label{eq:adj}
\Id-\adj(\Id+C)=\bigl(C-({\rm Tr}\, C)\Id\bigr)+P_2(C),
\end{equation}
where the entries of the matrix $P_2(C)$ are at least quadratic polynomials. 
Applying this relation to the matrix $DX(t),$ and using the fact that
\begin{equation}\label{eq:DX}
DX_v(t,y)-\Id=\int_0^tD\bar v(\tau,y)\,d\tau,
\end{equation}
 we deduce that
$$
\Id-\adj(DX_v(t))=\int_0^t\bigl(D\bar v-\div\bar v\,\Id\bigr)\,d\tau
+P_2\biggl(\Bigl(\int_0^t D\bar v\,d\tau\Bigr)\biggr).
$$
Given that $\dot B^{n/p}_{p,1}$ is a Banach algebra and that \eqref{eq:smallv} holds,  we readily 
get \eqref{eq:U1}. 
\smallbreak
In order to prove \eqref{eq:U2}, we just use the fact that, under assumption \eqref{eq:smallv}, we have  
\begin{equation}\label{eq:C}
A_v(t)=(\Id+C_v(t))^{-1}=\sum_{k\in\N} (-1)^k(C_v(t))^k\quad\hbox{with}\quad
C_v(t)=\int_0^t D\bar v\,d\tau,
\end{equation}
and that $\dot B^{n/p}_{p,1}$ is a Banach algebra. 
\smallbreak
\smallbreak
As regards \eqref{eq:J}, we write
\begin{equation}\label{eq:Jv}
J_v(t,y)=1+\int_0^t\div v(\tau,X_v(\tau,y))\,J_v(\tau,y)\,d\tau
=1+\int_0^t (D\bar v:\adj(DX_v))(\tau,y)\,d\tau.
\end{equation}

Hence, if Condition  \eqref{eq:smallv} holds then we have \eqref{eq:J} for $J_v.$
In order to get the inequality for $J_v^{-1},$ it suffices to use the fact that
$$
J_v^{-1}(t,y)-1=(1+(J_v(t,y)-1))^{-1}-1=\sum_{k\geq1}(-1)^k\biggl(\int_0^tD\bar v:\adj(DX_v)\,d\tau\biggr)^k.
$$

For proving \eqref{eq:U3}, we use the decomposition
$$\displaylines{\quad
2(\adj(DX_v)D_{A_v}(\bar w)-D(\bar w))=(\adj(DX_v)-\Id)(D\bar w+\nabla\bar w)
\hfill\cr\hfill+(\adj(DX_v)-\Id)\bigl(D\bar w\cdot(A-\Id)
+({}^T\!A-\Id)\cdot\nabla\bar w\bigr).}
$$
Hence the desired inequality stems from  \eqref{eq:U1} and \eqref{eq:U2}, and from 
the fact that $\dot B^{n/p}_{p,1}$ is a Banach algebra.
Inequality \eqref{eq:U4} is similar.
This completes the proof of the lemma.
\end{p}

\begin{lem} 
Let $\bar v_1$ and $\bar v_2$ be two vector-fields satisfying \eqref{eq:smallv},
and $\dv:=\bar v_2-\bar v_1.$ 
Then we have for all  $p\in[1,+\infty)$ and all $t\in[0,T]$ (with obvious notation):
\begin{equation}\label{eq:dA}
\|A_2(t)-A_1(t)\|_{\dot B^{n/p}_{p,1}} \lesssim 
 \|D\dv\|_{L_t^1(\dot B^{n/p}_{p,1})},
\end{equation}
\begin{equation}\label{eq:dAdj}
\|\adj(DX_2(t))-\adj(DX_1(t))\|_{\dot B^{n/p}_{p,1}} \lesssim 
 \|D\dv\|_{L_t^1(\dot B^{n/p}_{p,1})},
\end{equation}
\begin{equation}\label{eq:dJ}
\|J_2^{\pm1}(t)-J_1^{\pm1}(t)\|_{\dot B^{n/p}_{p,1}} \lesssim 
 \|D\dv\|_{L_t^1(\dot B^{n/p}_{p,1})}.
\end{equation}

 \end{lem}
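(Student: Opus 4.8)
The plan is to reduce all three estimates to a single linear bound on the matrix $DX_2-DX_1$, using resolvent and telescoping identities together with the Banach algebra property of $\dot B^{n/p}_{p,1}$ and the fact that, under \eqref{eq:smallv}, all the relevant matrices stay in a small ball of $\Id+\dot B^{n/p}_{p,1}$. Set $C_i(t):=DX_i(t)-\Id=\int_0^tD\bar v_i\,d\tau$ (recall \eqref{eq:DX}); then $C_2-C_1=\int_0^tD\dv\,d\tau$, so that $\|(C_2-C_1)(t)\|_{\dot B^{n/p}_{p,1}}\le\|D\dv\|_{L^1_t(\dot B^{n/p}_{p,1})}$ while $\|C_i(t)\|_{\dot B^{n/p}_{p,1}}\le c$ by \eqref{eq:smallv}. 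To get \eqref{eq:dA}, note that $A_i=(\Id+C_i)^{-1}$ is well defined via the Neumann series \eqref{eq:C}, so the pointwise resolvent identity yields
$$
A_2-A_1=-A_2\,(C_2-C_1)\,A_1.
$$
Since $\|A_i-\Id\|_{\dot B^{n/p}_{p,1}}\lesssim\|D\bar v_i\|_{L^1_t(\dot B^{n/p}_{p,1})}\lesssim1$ by \eqref{eq:U2}, writing $A_i=\Id+(A_i-\Id)$ and invoking the algebra property gives \eqref{eq:dA}.

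For \eqref{eq:dAdj} I would use the expansion \eqref{eq:adj}, namely $\adj(DX_i)=\Id-(C_i-(\Tr C_i)\Id)-P_2(C_i)$, so that
$$
\adj(DX_2)-\adj(DX_1)=-\bigl((C_2-C_1)-(\Tr(C_2-C_1))\Id\bigr)-\bigl(P_2(C_2)-P_2(C_1)\bigr).
$$
The first bracket is linear in $C_2-C_1$, hence directly bounded by $\|D\dv\|_{L^1_t(\dot B^{n/p}_{p,1})}$. For the polynomial remainder, I would expand $P_2$ into monomials of degree between $2$ and $n-1$ in the entries of its argument; a telescoping of the difference of any such monomial extracts a single factor $C_2-C_1$ multiplied by products of entries of $C_1$ and $C_2$, and the Banach algebra property together with $\|C_i\|_{\dot B^{n/p}_{p,1}}\le c$ then bounds $\|P_2(C_2)-P_2(C_1)\|_{\dot B^{n/p}_{p,1}}$ by $C\|(C_2-C_1)(t)\|_{\dot B^{n/p}_{p,1}}$. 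This proves \eqref{eq:dAdj}.

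For \eqref{eq:dJ} I would start from \eqref{eq:Jv}, i.e. $J_i(t)=1+\int_0^tD\bar v_i:\adj(DX_i)\,d\tau$, whence
$$
J_2(t)-J_1(t)=\int_0^t\Bigl(D\dv:\adj(DX_2)+D\bar v_1:\bigl(\adj(DX_2)-\adj(DX_1)\bigr)\Bigr)\,d\tau.
$$
Bounding the first term by \eqref{eq:U1} (which controls $\adj(DX_2)$ in $L^\infty_t(\dot B^{n/p}_{p,1})$) and the algebra property, and the second by the just-proved \eqref{eq:dAdj} together with \eqref{eq:smallv}, gives $\|(J_2-J_1)(t)\|_{\dot B^{n/p}_{p,1}}\lesssim\|D\dv\|_{L^1_t(\dot B^{n/p}_{p,1})}$. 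For the inverse I would write $J_2^{-1}-J_1^{-1}=-J_2^{-1}(J_2-J_1)J_1^{-1}$ and use \eqref{eq:J} to bound $\|J_i^{\pm1}-1\|_{\dot B^{n/p}_{p,1}}$; the algebra property then finishes the proof of \eqref{eq:dJ}.

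The only slightly delicate point is the treatment of the genuinely nonlinear pieces — the difference $P_2(C_2)-P_2(C_1)$ in \eqref{eq:dAdj}, and implicitly the matrix and scalar inverses entering \eqref{eq:dA} and \eqref{eq:dJ}: one must check that, after telescoping out a single factor of the relevant difference, all remaining factors are uniformly bounded in $\dot B^{n/p}_{p,1}$, which is precisely what the smallness hypothesis \eqref{eq:smallv} and the flow estimates \eqref{eq:U1}, \eqref{eq:U2}, \eqref{eq:J} of the preceding lemma guarantee. Beyond this, the argument needs no new analytic input and amounts to routine bookkeeping on top of the Banach algebra structure.
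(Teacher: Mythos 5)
Your proof is correct and follows essentially the same route as the paper: the paper telescopes the Neumann series $\sum(-1)^kC_i^k$ for \eqref{eq:dA} (your resolvent identity $A_2-A_1=-A_2(C_2-C_1)A_1$ is just the summed form of that), applies the finite Taylor expansion of the polynomial $P_2$ for \eqref{eq:dAdj} (equivalent to your monomial telescoping), and leaves \eqref{eq:dJ} to the reader as "similar arguments" based on \eqref{eq:Jv}, which you carry out correctly. The only substantive content you add is the explicit verification of \eqref{eq:dJ}, and that part is sound.
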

 \begin{p}
 In order to prove the first inequality, we use the fact
 that, for $i=1,2,$ we have
 $$
 A_i=(\Id+C_i)^{-1}=\sum_{k\geq0}(-1)^kC_i^k\quad\hbox{with}\quad
 C_i(t)=\int_0^t D\bar v_i\,d\tau.
 $$
 Hence
$$
A_2-A_1= \sum_{k\geq 1} \Bigl(C_2^k-C_1^k\Bigr)
=\biggl(\int_0^tD\dv\,d\tau\biggr)\sum_{k\geq1}\sum_{j=0}^{k-1}
C_1^jC_2^{k-1-j}.
$$
So using the fact that $\dot B^{n/p}_{p,1}$ is a Banach algebra, it is easy to conclude
to \eqref{eq:dA}.
\smallbreak
The second inequality is a consequence of the decomposition \eqref{eq:adj} and of the Taylor formula which ensures
that, denoting $\dC:=C_2-C_1,$
$$
\adj(DX_2)-\adj(DX_1)=(\Tr(\dC))\Id-\dC+dP_2(C_1)(\dC)+\frac12d^2P_2(C_1,C_1)(\dC,\dC)+\dotsm
$$
where the coefficients of $P_2$ are polynomials of degree $n-1.$ 
As the sum is finite and $\dot B^{n/p}_{p,1}$ is a Banach algebra, we get \eqref{eq:dAdj}.

Proving the third inequality relies on similar arguments. It is only a matter of using \eqref{eq:Jv}. 
The details are left to the reader.

\end{p}


\subsection{Commutator and product estimates}

This last paragraph is devoted to the proof of commutator and product estimates that
have been used for investigating the Lam\'e system. Those proofs rely on the  following \emph{Bony decomposition}
(first introduced in \cite{Bony}) for the product of two functions:
\begin{equation}\label{eq:bony}
fg=T_fg+R(f,g)+T_gf.
\end{equation}
The paraproduct and remainder operators $T$ and $R$ are defined by 
$$
T_fg:=\sum_{j'\leq j-2} \dot\Delta_{j'}f\ddj g\quad\hbox{and}\quad
R(f,g):=\sum_{|j'-j|\leq1}  \dot\Delta_{j'}f\ddj g,
$$
where $(\ddj)_{j\in\Z}$ stands for some homogeneous Littlewood-Paley decomposition.
\begin{lem}\label{l:prod} Let $p$ be in $[1,+\infty]$ and the real numbers $\nu$ and $\sigma$ satisfy
$$
\nu\geq 0\quad\hbox{and}\quad -\min\biggl(\frac np,\frac n{p'}\biggr)<\sigma\leq\frac np-\nu.
$$
Then the following estimate holds true for all tempered distributions $f$ and $g$  over $\R^n$:
$$
\|fg\|_{\dot B^\sigma_{p,1}}\lesssim \|f\|_{\dot B^{n/p-\nu}_{p,1}}\|g\|_{\dot B^{\sigma+\nu}_{p,1}}.
$$
\end{lem}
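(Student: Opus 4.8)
The plan is to rely on Bony's decomposition \eqref{eq:bony}, $fg=T_fg+T_gf+R(f,g)$, and to bound each of the three pieces separately in $\dot B^\sigma_{p,1}$; adding up the three bounds produces exactly $\|f\|_{\dot B^{n/p-\nu}_{p,1}}\|g\|_{\dot B^{\sigma+\nu}_{p,1}}$, and the convergence of the three series in $\dot B^\sigma_{p,1}$ comes out of the estimates themselves. The only ingredients needed are Bernstein's inequality --- both in the form $\|\dot\Delta_j h\|_{L^\infty}\lesssim 2^{jn/p}\|\dot\Delta_j h\|_{L^p}$ and in the form allowing one to raise the Lebesgue exponent on a frequency-localized function --- together with the quasi-orthogonality of the Littlewood--Paley blocks: $\dot S_{j-1}h\cdot\dot\Delta_j k$ is spectrally supported in a fixed dyadic annulus $2^j\cC$, while $\dot\Delta_{j'}h\cdot\dot\Delta_j k$ with $|j-j'|\le1$ is supported in a ball $2^j\mathcal B$.

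For $T_fg=\sum_j\dot S_{j-1}f\,\dot\Delta_j g$, quasi-orthogonality gives $\|\dot\Delta_k(T_fg)\|_{L^p}\lesssim\sum_{|k-j|\le N}\|\dot S_{j-1}f\|_{L^\infty}\|\dot\Delta_j g\|_{L^p}$, and Bernstein yields $\|\dot S_{j-1}f\|_{L^\infty}\lesssim\sum_{j'<j}2^{j'n/p}\|\dot\Delta_{j'}f\|_{L^p}$. Here is where I use $\nu\ge0$: writing $2^{j'n/p}=2^{j'\nu}2^{j'(n/p-\nu)}\le 2^{j\nu}2^{j'(n/p-\nu)}$ for $j'<j$ and summing over $j'$ gives $\|\dot S_{j-1}f\|_{L^\infty}\lesssim 2^{j\nu}\|f\|_{\dot B^{n/p-\nu}_{p,1}}$; then multiplying by $2^{k\sigma}$ and summing over $k$ and over $j$ reconstructs $\sum_j 2^{j(\sigma+\nu)}\|\dot\Delta_j g\|_{L^p}=\|g\|_{\dot B^{\sigma+\nu}_{p,1}}$. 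The term $T_gf=\sum_j\dot S_{j-1}g\,\dot\Delta_j f$ is handled symmetrically, but now the relevant bound is $\|\dot S_{j-1}g\|_{L^\infty}\lesssim\sum_{j'<j}2^{j'(n/p-\sigma-\nu)}\bigl(2^{j'(\sigma+\nu)}\|\dot\Delta_{j'}g\|_{L^p}\bigr)$, and it is the hypothesis $\sigma\le n/p-\nu$ that makes the exponent $n/p-\sigma-\nu$ nonnegative, so the geometric factor is dominated by its value at $j'=j$ and one gets $\|\dot S_{j-1}g\|_{L^\infty}\lesssim 2^{j(n/p-\sigma-\nu)}\|g\|_{\dot B^{\sigma+\nu}_{p,1}}$; the combined weight then collapses to $2^{j(n/p-\nu)}$ and rebuilds $\|f\|_{\dot B^{n/p-\nu}_{p,1}}$.

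The remainder is the only genuinely delicate term, precisely because $\sigma$ may be nonpositive. I write $R(f,g)=\sum_j u_j$ with $u_j:=(\dot\Delta_{j-1}+\dot\Delta_j+\dot\Delta_{j+1})f\cdot\dot\Delta_j g$, each $u_j$ spectrally supported in a ball $2^j\mathcal B$. Hölder's inequality gives $\|u_j\|_{L^{p/2}}\le\|(\dot\Delta_{j-1}+\dot\Delta_j+\dot\Delta_{j+1})f\|_{L^p}\|\dot\Delta_j g\|_{L^p}$, which after extracting the Besov weights is $\lesssim c_j\,2^{-j(n/p+\sigma)}\|f\|_{\dot B^{n/p-\nu}_{p,1}}\|g\|_{\dot B^{\sigma+\nu}_{p,1}}$ with $(c_j)\in\ell^1$ and $\|(c_j)\|_{\ell^1}\lesssim1$. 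Since $\dot\Delta_k u_j$ vanishes unless $2^k\lesssim 2^j$ and is then frequency-localized at $2^k$, Bernstein's inequality together with the band-limitedness of $u_j$ (which makes such inequalities valid in $L^r$ for every $r>0$) gives $\|\dot\Delta_k u_j\|_{L^p}\lesssim 2^{kn/p}\|u_j\|_{L^{p/2}}$ --- the crucial point being that the loss is $2^{kn/p}$, governed by the output frequency, rather than $2^{jn/p}$. Therefore $2^{k\sigma}\|\dot\Delta_k R(f,g)\|_{L^p}\lesssim\|f\|_{\dot B^{n/p-\nu}_{p,1}}\|g\|_{\dot B^{\sigma+\nu}_{p,1}}\sum_{j\ge k-N}2^{(k-j)(n/p+\sigma)}c_j$, and summing over $k\in\Z$ --- legitimate because the hypothesis $\sigma>-\min(n/p,n/p')\ge -n/p$ forces $n/p+\sigma>0$, so the inner geometric series converges --- finishes the estimate for $R(f,g)$ and hence the proof. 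Everything outside this remainder step is routine bookkeeping of dyadic sums.
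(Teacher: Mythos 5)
Your overall strategy --- Bony's decomposition plus direct dyadic estimates --- is sound, and your treatment of the two paraproduct terms is correct. (The paper itself does not rederive anything: it simply quotes the standard continuity results for $T$ and $R$ from \cite{BCD}, Chap.~2, under exactly the stated conditions, and concludes via the embedding $\dot B^{s}_{p,1}\hookrightarrow\dot B^{s-n/p}_{\infty,1}$; your self-contained rederivation of the paraproduct bounds is a legitimate alternative.) The problem is in the remainder term, and it is betrayed by the fact that you never use the hypothesis $\sigma>-n/p'$: your argument only invokes $\sigma>-n/p$, yet for $1\le p<2$ one has $n/p'<n/p$, so it is precisely the constraint $\sigma>-n/p'$ that is binding there.

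Concretely, the step $\|\dot\Delta_k u_j\|_{L^p}\lesssim 2^{kn/p}\|u_j\|_{L^{p/2}}$ is only justified when $p\ge2$: writing $\dot\Delta_k u_j=\check h_k*u_j$, it is then Young's inequality, $\|\check h_k*u_j\|_{L^p}\le\|\check h_k\|_{L^{p'}}\|u_j\|_{L^{p/2}}\lesssim 2^{kn/p}\|u_j\|_{L^{p/2}}$. When $1\le p<2$ the exponent $p/2$ is less than $1$, Young's inequality is unavailable, and the claimed bound is not uniform in $j$. The Plancherel--Polya--Nikolskii inequality does let you pass from $L^{p/2}$ to $L^p$ on the block $\dot\Delta_k u_j$ (spectrally supported in $2^k\cC$) at the cost of $2^{kn/p}$, but one first needs $\|\dot\Delta_k u_j\|_{L^{p/2}}\lesssim\|u_j\|_{L^{p/2}}$, and convolution with $\check h_k$ is not bounded on $L^{p/2}$ for $p/2<1$: for a function whose spectrum fills a ball of radius $\sim2^j$ with $j\ge k$, the sharp constant carries an extra factor $2^{(j-k)n(2/p-1)}$. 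Propagating this factor turns your inner sum into $\sum_{j\ge k-N}2^{(k-j)(\sigma+n/p')}c_j$, whose convergence is exactly the condition $\sigma>-n/p'$. So your proof is complete for $p\ge2$, but for $1\le p<2$ (a range the paper genuinely needs) the remainder estimate requires this correction --- or one simply invokes the standard remainder continuity result under the stated condition $\sigma>-\min(n/p,n/p')$, as the paper does.
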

\begin{proof}
The result relies on Bony decomposition \eqref{eq:bony}.
The standard continuity results for the paraproduct and remainder operators
ensure that (see e.g. \cite{BCD}, Chap. 2):
$$
\begin{array}{llll}
\|T_fg\|_{\dot B^\sigma_{p,1}}&\!\!\!\!\!\lesssim\!\!\!\!\!& \|f\|_{\dot B^{-\nu}_{\infty,1}}\|g\|_{\dot B^{\sigma+\nu}_{p,1}}
\quad&\hbox{if }\ \nu\geq0,\\[1ex]
\|T_gf\|_{\dot B^\sigma_{p,1}}&\!\!\!\!\!\lesssim\!\!\!\!\!& \|g\|_{\dot B^{\sigma+\nu-n/p}_{\infty,1}}\|f\|_{\dot B^{n/p-\nu}_{p,1}}
\quad&\hbox{if }\ \sigma+\nu-n/p\leq0,\\[1ex]
\|R(f,g)\|_{\dot B^\sigma_{p,1}}&\!\!\!\!\!\lesssim\!\!\!\!\!& \|f\|_{\dot B^{n/p-\nu}_{p,1}}\|g\|_{\dot B^{\sigma+\nu}_{p,1}}
\quad&\hbox{if }\ \sigma>-\min(n/p,n/p').
\end{array}
$$
So the result follows once noticed that 
$\dot B^s_{p,1}\hookrightarrow \dot B^{s-n/p}_{\infty,1}$ for any $s\in\R.$
\end{proof}

\begin{lem}\label{l:com}
Assume that $\sigma,$ $\nu$ and $p$ are such that
\begin{equation}\label{eq:condcom}
1\leq p\leq+\infty,\quad
0\leq\nu\leq \frac np\ \hbox{ and }\  -\min\biggl(\frac np,\frac n{p'}\biggr)-1<\sigma\leq\frac np-\nu.
\end{equation}
There exists a constant $C$ depending only on $\nu,$ $p,$  $\sigma$ and $n$ such that
for all $k\in\{1,\cdots,n\},$ we have for some sequence $(c_j)_{j\in\Z}$ with $\|c\|_{\ell^1(\Z)}=1$:
$$
\|\d_k[a,\ddj]w\|_{L^p}\leq 
Cc_j2^{-j\sigma}\|\nabla a\|_{B^{n/p-\nu}_{p,1}}\|w\|_{B^{\sigma+\nu}_{p,1}}\quad\hbox{for all }\ j\in\Z.
$$
\end{lem}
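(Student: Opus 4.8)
The plan is to combine Bony's decomposition \eqref{eq:bony} with a first-order Taylor expansion of the low-frequency part of $a.$ Writing $aw=T_aw+T_wa+R(a,w),$ applying $\ddj,$ and subtracting from $a\,\ddj w=T_a(\ddj w)+T_{\ddj w}a+R(a,\ddj w),$ one gets
$$[a,\ddj]w=\bigl(T_a\ddj w-\ddj T_aw\bigr)+\bigl(T_{\ddj w}a-\ddj T_wa\bigr)+\bigl(R(a,\ddj w)-\ddj R(a,w)\bigr),$$
so that $\d_k[a,\ddj]w$ splits into the genuine paraproduct commutator $\d_k[T_a,\ddj]w$ and four ``off-diagonal'' pieces $\d_kT_{\ddj w}a,$ $\d_k\ddj T_wa,$ $\d_kR(a,\ddj w),$ $\d_k\ddj R(a,w).$ These last four carry no cancellation, but after $\ddj$ (resp. $\dot S_{l-1}$ in $T_{\ddj w}a$) has acted, only finitely many dyadic blocks of index close to $j$ survive in the two paraproduct terms, while only the high-frequency part of the other factor survives in the two remainder terms; applying $\d_k$ then costs a harmless factor $2^j$ that is absorbed by Bernstein's inequality and the available regularity. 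The resulting bounds are the standard continuity estimates for the paraproduct and remainder operators (Lemma \ref{l:prod} and its building blocks, here used with the weight $2^{-l}$ on $a$ so as to trade $a$ for $\nabla a$), provided one optimises the Lebesgue exponents in the remainder when $\sigma$ is very negative. This is where the assumptions \eqref{eq:condcom} enter: $\sigma\le n/p-\nu$ ensures convergence of the low-frequency truncations of $w,$ while $\sigma>-\min(n/p,n/p')-1$ is the product threshold of Lemma \ref{l:prod} lowered by one, reflecting the derivative gained through the commutator structure.

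It then remains to treat the genuine commutator $\d_k[T_a,\ddj]w.$ By the spectral support properties of the paraproduct, only blocks with $|l-j|\le4$ contribute, so the term reduces to a finite sum of expressions $\d_k[\dot S_{l-1}a,\ddj]\dot\Delta_lw.$ Writing $\ddj$ as convolution against $h_j(z)=2^{jn}h(2^jz),$ one has
$$[\dot S_{l-1}a,\ddj]g(x)=\int h_j(x-y)\bigl(\dot S_{l-1}a(y)-\dot S_{l-1}a(x)\bigr)g(y)\,dy,$$
and the Taylor identity $\dot S_{l-1}a(y)-\dot S_{l-1}a(x)=(y-x)\cdot\int_0^1(\nabla\dot S_{l-1}a)(x+\tau(y-x))\,d\tau$ rewrites this as a convolution-type operator against $\nabla\dot S_{l-1}a.$ The crucial point is that once $\d_k$ is applied, the factor $2^j$ it would produce is exactly compensated by the factor $2^{-j}$ coming from the $(y-x)$ in the Taylor remainder (equivalently, $\|h_j\|_{L^1}$ and $\||\cdot|\,\d_kh_j\|_{L^1}$ are both $O(1)$), so that
$$\|\d_k[\dot S_{l-1}a,\ddj]\dot\Delta_lw\|_{L^p}\lesssim\|\nabla\dot S_{l-1}a\|_{L^\infty}\,\|\dot\Delta_lw\|_{L^p}.$$
Summing over $|l-j|\le4,$ one controls $\|\nabla\dot S_{l-1}a\|_{L^\infty}$ in terms of $\|\nabla a\|_{\dot B^{n/p-\nu}_{p,1}}$ via $\dot B^{n/p-\nu}_{p,1}\hookrightarrow\dot B^{-\nu}_{\infty,1}$ and $\nu\ge0$ (the resulting factor $2^{j\nu}$ being exactly offset by $\|\dot\Delta_lw\|_{L^p}\lesssim 2^{-l(\sigma+\nu)}c_l\|w\|_{\dot B^{\sigma+\nu}_{p,1}}$), which yields a bound of the announced form, the $\ell^1$ sequence being a finite-width convolution of the relevant sequences. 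Here $\nu\le n/p$ is precisely what is needed to make sense of $\dot S_{l-1}a$ (and hence of $\nabla\dot S_{l-1}a$) in the homogeneous framework, where only $\nabla a$ is assumed to belong to a Besov space.

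I expect this last step to be the main obstacle: establishing the absence of derivative loss in $\d_k[T_a,\ddj]w$ requires tracking carefully how $\d_k$ distributes over the rescaled convolution kernel and the Taylor remainder, and one must simultaneously verify that the homogeneous low-frequency truncation $\dot S_{l-1}a$ is a meaningful object -- a $C^\infty$ function of polynomial growth with bounded gradient -- under the sole hypothesis $\nabla a\in\dot B^{n/p-\nu}_{p,1}$ with $\nu\le n/p.$ By contrast, the four off-diagonal terms amount to a routine, if somewhat lengthy, bookkeeping of Bernstein and H\"older inequalities, essentially repackaging the proof of Lemma \ref{l:prod}.
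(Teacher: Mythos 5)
Your proposal is correct and follows essentially the same route as the paper: the same Bony splitting of $[a,\ddj]w$ into the genuine paraproduct commutator $[T_a,\ddj]w$ plus the remaining paraproduct/remainder pieces, the kernel representation with a first-order Taylor expansion of $\dot S_{l-1}a$ to gain the factor $2^{-j}$ that absorbs the derivative $\d_k$ (the paper outsources this to Lemma~6 of \cite{D5}, whose proof is exactly your argument), the embedding $\dot B^{n/p-\nu}_{p,1}\hookrightarrow\dot B^{-\nu}_{\infty,1}$ to control $\|\nabla\dot S_{l-1}a\|_{L^\infty}$, and Bernstein/H\"older plus the standard paraproduct and remainder continuity results for the off-diagonal terms, under the same conditions on $\sigma,\nu,p$. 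The only minor bookkeeping difference is that the paper uses $n/p-\nu\ge0$ chiefly to make the convolution inequality for the series in the term $\d_kT'_{\ddj w}a$ converge, rather than (only) to give meaning to $\dot S_{l-1}a$.
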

\begin{proof}
Taking advantage of the Bony decomposition \eqref{eq:bony}, we rewrite the commutator as\footnote{Here we use 
the notation $T'_uv:=T_uv+R(u,v).$}
\begin{equation}\label{eq:dec}
\d_k([a,\ddj]w)=
\underbrace{\d_k([T_{a},\ddj]w)}_{R_j^1}+\underbrace{\d_kT'_{\ddj w}a}_{R_j^2}
-\underbrace{\d_k\ddj T'_wa}_{R_j^3}.
\end{equation}
Arguing as in the proof of Lemma 6 in \cite{D5}, we get
$$
\|R_j^1\|_{L^p}\leq C\sum_{|j'-j|\leq4}\|\nabla\dot S_{j'-1}a\|_{L^\infty}\|\dot \Delta_{j'}w\|_{L^p}.
$$
Now, for $\nu\geq0,$ we have
$$
\|\nabla\dot S_{j'-1}a\|_{L^\infty}\leq C2^{j'\nu}\|\nabla a\|_{\dot B^{-\nu}_{\infty,1}}.
$$
Therefore,   for some sequence $(c_j)_{j\in\Z}$ in the unit sphere of $\ell^1(\Z),$
\begin{equation}\label{eq:com1a}
\|R_j^1\|_{L^p}\leq C c_j2^{-j\sigma}\|\nabla a\|_{\dot B^{-\nu}_{\infty,1}}\|w\|_{\dot B^{\sigma+\nu}_{p,1}}.
\end{equation}
To deal with $R_j^2,$ we use the fact that, owing to the  localization properties of the Littlewood-Paley decomposition, we have
$$R_j^2=\sum_{j'\geq j-2}
\d_k\bigl(S_{j'+2}\ddj w\,\dot\Delta_{j'}a\bigr).
$$ 
Hence, using the Bernstein and H\"older inequalities,
$$\begin{array}{lll}
\|R_j^2\|_{L^p}&\leq& C\Sum_{j'\geq j-2}\|S_{j'+2}\ddj w\|_{L^\infty}
\|\dot\Delta_{j'}\nabla a\|_{L^p},\\[1ex]
&\leq&C 2^{-j\sigma}\Sum_{j'\geq j-2}2^{(j-j')(\frac np-\nu)}\bigl(2^{j(\sigma+\nu-\frac np)}\|\ddj w\|_{L^\infty}\bigr)
\bigl(2^{j'(\frac np-\nu)}\|\dot\Delta_{j'}\nabla a\|_{L^p}\bigr).
\end{array}
$$ 
Therefore, by virtue  of convolution inequalities for series and because $n/p-\nu\geq0,$ 
\begin{equation}\label{eq:com2a}
\|R_j^2\|_{L^p}\leq C c_j2^{-j\sigma}\|\nabla a\|_{B^{n/p-\nu}_{p,1}}
\|w\|_{B^{\sigma+\nu-\frac np}_{\infty,1}}.
\end{equation}
Next,  from standard continuity results, we know that  the paraproduct and the remainder 
map $B^{\sigma+\nu}_{p,1}\times B^{n/p-\nu+1}_{p,1}$ in $B^{\sigma+1}_{p,1}$
whenever $\sigma+\nu-n/p\leq0$ and $\sigma+1>-\min(n/p,n/p').$ 
We thus have
\begin{equation}\label{eq:com4}
\|R_j^3\|_{L^p}
\leq 
Cc_j2^{-j\sigma}\|\nabla a\|_{B^{n/p-\nu}_{p,1}}\|w\|_{B^{\sigma+\nu}_{p,1}}.
\end{equation}
Putting Inequalities \eqref{eq:com1a}, \eqref{eq:com2a} and \eqref{eq:com4} together, and using classical embedding  completes the proof of the lemma.
\end{proof}
\begin{lem}\label{l:com2}
Let $A(D)$ be a Fourier multiplier of degree $0.$ Then the following estimate holds
$$
\|[A(D),q]w\|_{\dot B^{\sigma+1}_{p,1}}\leq C\|q\|_{\dot B^{1-\nu+n/p}_{p,1}}\|w\|_{\dot B^{\sigma+\nu}_{p,1}}
$$
whenever
$$
\nu\geq0\quad\hbox{and}\quad
-\min\biggl(\frac np,\frac n{p'}\biggr)-1<\sigma\leq \frac np-\nu.
$$
\end{lem}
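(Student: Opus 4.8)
The plan is to run Bony's decomposition exactly as in the proof of Lemma~\ref{l:com}, isolating from $[A(D),q]w$ the single term that genuinely requires a commutator gain. First I would note that $[A(D),q]w$ is unchanged when $q$ is modified by an additive constant, so that the quantity $\|q\|_{\dot B^{1-\nu+n/p}_{p,1}}$ may be read as $\|\nabla q\|_{\dot B^{n/p-\nu}_{p,1}}$ and only $\nabla q$ enters. Writing $qw=T_qw+T_wq+R(q,w)$ and $qA(D)w=T_q(A(D)w)+T_{A(D)w}q+R(q,A(D)w)$ and subtracting, one gets
$$[A(D),q]w=\bigl(A(D)T_qw-T_qA(D)w\bigr)+\bigl(A(D)T_wq-T_{A(D)w}q\bigr)+\bigl(A(D)R(q,w)-R(q,A(D)w)\bigr).$$

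The first bracket is $[A(D),T_q]w=\sum_j[A(D),\dot S_{j-1}q]\ddj w$, and this is where the derivative is gained. I would argue exactly as for the term $R_j^1$ in the proof of Lemma~\ref{l:com} (i.e. as in Lemma~6 of \cite{D5}): since $\ddj w$ has frequencies in a fixed annulus of size $\sim2^j$ while $\dot S_{j-1}q$ has frequencies $\ll2^j$, the product $\dot S_{j-1}q\,\ddj w$ is again frequency-localized in an annulus $\sim2^j$, so $A(D)$ may be replaced there by $\tilde A_j(D)$ with symbol $\tilde A_j(\xi)=A(\xi)\tilde\varphi(2^{-j}\xi)$ — a genuine Schwartz-class symbol because $A$ is smooth off the origin — whose kernel $\tilde h_j$ satisfies $\||\cdot|\,\tilde h_j\|_{L^1}\lesssim2^{-j}$ by scaling. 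A first-order Taylor expansion of $\dot S_{j-1}q$ then gives $\|[A(D),\dot S_{j-1}q]\ddj w\|_{L^p}\lesssim2^{-j}\|\nabla\dot S_{j-1}q\|_{L^\infty}\|\ddj w\|_{L^p}$. Combining with $\|\nabla\dot S_{j-1}q\|_{L^\infty}\lesssim2^{j\nu}\|\nabla q\|_{\dot B^{-\nu}_{\infty,1}}\lesssim2^{j\nu}\|q\|_{\dot B^{1-\nu+n/p}_{p,1}}$ (valid for all $\nu\ge0$), multiplying by $2^{j(\sigma+1)}$ and summing over $j$ yields the desired bound for this bracket, with no constraint on $\sigma$ beyond $\nu\ge0$.

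For the remaining two brackets no cancellation is needed: since $A(D)$ is bounded on every $\dot B^s_{p,1}$ and $A(D)w\in\dot B^{\sigma+\nu}_{p,1}$ by the same $0$-order boundedness, it suffices to bound $T_wq$, $T_{A(D)w}q$, $R(q,w)$ and $R(q,A(D)w)$ in $\dot B^{\sigma+1}_{p,1}$; this follows from the paraproduct and remainder continuity results recalled in the proof of Lemma~\ref{l:prod} together with the embeddings $\dot B^s_{p,1}\hookrightarrow\dot B^{s-n/p}_{\infty,1}$. Here the hypothesis $-\min(n/p,n/p')-1<\sigma\le n/p-\nu$ is exactly what is needed: $\sigma\le n/p-\nu$ makes the low-frequency index $\sigma+\nu-n/p$ of $T_wq$ and $T_{A(D)w}q$ non-positive, while the lower bound on $\sigma$ is dictated by the remainder terms — placing $q\in\dot B^{n/p-\nu+1}_{p,1}$ in $L^{\max(p,p')}$ and keeping $w\in\dot B^{\sigma+\nu}_{p,1}$, one lands in $\dot B^{\sigma+1}_{p,1}$ precisely when $\sigma+1+\min(n/p,n/p')>0$. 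I expect the only genuinely delicate point to be this endpoint bookkeeping for the remainder; everything else is a routine replay of the tools already used for Lemmas~\ref{l:prod} and~\ref{l:com}.
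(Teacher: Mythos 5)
Your proof is correct and follows essentially the same route as the paper: the identical Bony splitting $[A(D),q]w=[A(D),T_q]w+A(D)T'_wq-T'_{A(D)w}q$, the gain of one derivative only in the paraproduct commutator, and standard paraproduct/remainder continuity plus the $0$-order boundedness of $A(D)$ for the rest. The only difference is that you spell out the kernel/Taylor argument for $\|[A(D),T_q]w\|_{\dot B^{\sigma+1}_{p,1}}\lesssim\|\nabla q\|_{\dot B^{-\nu}_{\infty,1}}\|w\|_{\dot B^{\sigma+\nu}_{p,1}}$, which the paper simply cites as Lemma 2.99 of \cite{BCD}.
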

\begin{proof} Taking advantage once again of Bony's decomposition, 
we decompose the commutator into
\begin{equation}\label{eq:decompo}
[A(D),q]w=[A(D),T_q]w+A(D)T'_wq-T'_{A(D)w}q.
\end{equation}
According to Lemma 2.99 in \cite{BCD}, we have for $\nu\geq0,$
$$
\|[A(D),T_q]w\|_{\dot B^{\sigma+1}_{p,1}}\leq C\|\nabla q\|_{\dot B^{-\nu}_{\infty,1}}\|w\|_{\dot B^{\sigma+\nu}_{p,1}}.
$$
Next, given that $A(D)$ is a homogeneous multiplier of degree $0,$ it maps any homogeneous Besov
space in itself. Therefore the last two terms of \eqref{eq:decompo} may be just bounded
according to standard continuity results for the paraproduct and remainder operators. 
\end{proof}


\end{document}